\newtheorem{theorem}{Theorem}
\newtheorem{corollary}[theorem]{Corollary}
\newtheorem{lemma}[theorem]{Lemma}
\numberwithin{equation}{section}
\numberwithin{theorem}{section}
\theoremstyle{definition}
\newtheorem{remark}[theorem]{Remark}
\DeclareMathOperator{\vol}{vol}
\DeclareMathOperator{\res}{res}
\DeclareMathOperator*{\Res}{Res} 
\newcommand{\unity}{{1\!\!\!\:\mathrm{l}}}
\newcommand{\x}{\mathrm{x}}
\newcommand{\y}{\mathrm{y}}
\newcommand{\z}{\mathrm{z}}
\newcommand{\imm}{\mathrm{f}}
\newcommand{\qi}{\mathrm{i}}
\newcommand{\qj}{\mathrm{j}}
\newcommand{\blankpage}{\newpage\thispagestyle{empty}\mbox{}\newpage} 
\title{Solutions of the Sinh-Gordon Equation of Spectral Genus Two and constrained Willmore Tori I}
\author[$\dagger$]{M.\ Knopf}
\affil[$\dagger$]{School of Business Informatics and Mathematics, University of Mannheim, D-68131 Mannheim, Germany}
\author[$\dagger$]{R.\ Pe\~na Hoepner}
\author[$\dagger$,$\ddag$]{M.\ U.\ Schmidt}
\affil[$\ddag$]{email:schmidt@math.uni-mannheim.de, ORCID:0000-0002-1775-4701}
\date{\today}
\begin{document}
\maketitle
\begin{abstract}
In \cite{PS:89} and \cite{Hi} the solutions of the elliptic sinh-Gordon equation of finite spectral genus $g$ are investigated. These solutions are parametrized by complex matrix-valued polynomials called potentials. On the space of these potentials there act two commuting flows. The orbits of these flows are called Polynomial Killing fields. For $g=2$ they are double periodic. The eigenvalues of these matrix-valued polynomials are preserved along the flows and determine the lattice of periods. For $g = 2$, we investigate the level sets of these eigenvalues, which are called isospectral sets, and the dependence of the period lattice on the isospectral sets. The limiting cases of spectral genus one and zero are included. These limiting cases are used to construct on every elliptic curve three conformal maps to $\mathbb{H}$ which are constrained Willmore. Finally, the Willmore functional is calculated in dependence of the conformal class.
\end{abstract}

\pagestyle{fancy}	
\fancyhf{}
\renewcommand{\sectionmark}[1]{\markboth{#1}{}} 
 \fancyhead[EL]{\scshape Solutions of the sinh-Gordon Equation of Spectral Genus Two}
 \fancyhead[OL]{\scshape\thesection.\,\leftmark}
 \fancyfoot[EC,OC]{\thepage}
 \renewcommand{\headrulewidth}{0.5pt}
 \renewcommand{\footrulewidth}{0.5pt}  
\headsep=35pt 
\voffset=-20pt 
\setcounter{page}{1} 
\section{Introduction}
The elliptic \textit{sinh-Gordon equation} is given by
\begin{equation}
\Delta u + 2\sinh(2u) = 0,
\label{eq_sinh}
\end{equation}
where $\Delta$ is the Laplacian of $\mathbb{R}^2$ with respect to the Euclidean metric and $u:\mathbb{R}^2\to\mathbb{R}$ is a twice partially differentiable real-valued function. Pinkall and Sterling constructed so called finite type solutions in terms of a finite dimensional ODE system~\cite[Section~5]{PS:89}. We represent this system as in~\cite[Section~2]{HKS1} by two commuting flows on a certain space of matrix valued polynomials. These polynomials are called potentials and their degree is related to the genus of a naturally assigned algebraic curve. For spectral genus zero there is only the trivial solution $u=0$. Uwe Abresch investigated spectral genus one solutions and spectral genus two solutions whose spectral curves have an additional symmetry~\cite{Abr}. Here we take up the work \cite{PH} on general spectral genus two solutions with the following space of potentials parametrized by $\alpha$, $\beta$ and $\gamma$:
\begin{align*}
\mathcal{P}_2:=\bigg\{\zeta = 
\begin{pmatrix} 
\alpha\lambda-\bar\alpha\lambda^2 & -\gamma^{-1}+\beta\lambda-\gamma\lambda^2\\
\gamma\lambda-\bar\beta\lambda^2+\gamma^{-1}\lambda^3 & -\alpha\lambda+\bar\alpha\lambda^2
\end{pmatrix} \,\bigg|\,\alpha,\beta\in\mathbb{C},\gamma\in\mathbb{R}^+\bigg\}.
\end{align*}
On this space the Lax equations define the following vector fields:
\begin{align}
\frac{\partial \zeta}{\partial x}=[\zeta,U(\zeta)]\quad\qquad\frac{\partial \zeta}{\partial y}=[\zeta,V(\zeta)]\label{LaxZeta}
\end{align}
with
\begin{align*}
U(\zeta) :=
\begin{pmatrix}
\frac{\alpha-\bar\alpha}{2} &  -\gamma^{-1}\lambda^{-1}-\gamma\\
\gamma+\gamma^{-1}\lambda & \frac{\bar\alpha-\alpha}{2}
\end{pmatrix},
\,\,\,
V(\zeta) :=i
\begin{pmatrix}
\frac{\alpha+\bar\alpha}{2} &  -\gamma^{-1}\lambda^{-1}+\gamma\\
\gamma-\gamma^{-1}\lambda & -\frac{\alpha+\bar\alpha}{2}.
\end{pmatrix}.
\end{align*}
A direct calculation shows that these two vector fields~\eqref{LaxZeta} commute:
\begin{align}\label{eq:commute}
[U(\zeta),V(\zeta)]+\frac{\partial V(\zeta)}{\partial x}-\frac{\partial U(\zeta)}{\partial y}&=0,&\frac{\partial^2 \zeta}{\partial x\partial y}&=\frac{\partial^2 \zeta}{\partial y\partial x}.
\end{align}
For every initial potential $\zeta (0) = \zeta_0\in\mathcal{P}_2$ the integral curves fit together and form a map $(x,y)\mapsto\zeta(x,y)$ on an open neighborhood of $(0,0)$ in $\mathbb{R}^2$ which is called Polynomial Killing field. The corresponding function $u(x,y):= \ln\gamma(x,y)$ solves the $\sinh$-Gordon equation \eqref{eq_sinh}.\vspace{3mm}

We define the polynomials $a\in\mathbb{C}^4[\lambda]$ of fourth degree as
\begin{align}\label{det a}
\det \zeta&=\lambda a(\lambda).
\end{align}
For all such polynomials $a$ the corresponding level sets
\begin{align}\label{eq:isospectral}
I(a)&=\{\zeta\in\mathcal{P}_2\mid \det\zeta=\lambda a(\lambda)\}
\end{align}
are called isospectral sets. A direct calculation utilizing~\eqref{LaxZeta} shows
\begin{align}\label{integrals of motion}
\frac{\partial a}{\partial x}&=0=\frac{\partial a}{\partial y}.
\end{align}
Therefore the Polynomial Killing fields stay in the isospectral set of the initial potential $\zeta_0\in\mathcal{P}_2$.\vspace{3mm}

\noindent{\bf Outline of the Paper.}
In Section~\ref{sec iso} we show that all isospectral sets $I(a)$ are compact and that the Polynomial Killing fields are defined globally on $(x,y)\in\mathbb{R}^2$ for all initial potentials $\zeta\in\mathcal{P}_2$. Consequently, the vector fields~\eqref{LaxZeta} induce an action
\begin{align}\label{eq:group action}
\phi:\mathbb{R}^2\ni(x,y)&\mapsto\phi(x,y),&\phi(x,y):\mathcal{P}_2&\to\mathcal{P}_2&\zeta&\mapsto\phi(x,y)\zeta.
\end{align}
We shall see that the isospectral sets $I(a)$ are invariant with respect to the action~\eqref{eq:group action} and decompose either in one ore two orbits. For polynomials $a$ with only one orbit in $I(a)$ the isotropy groups of this action
\begin{align}\label{eq:isotropy}
\Gamma_{\zeta}&=\{\gamma\in\mathbb{R}^2\mid\phi(\gamma)\zeta=\zeta\}
\end{align}
are the same for all $\zeta\in I(a)$. In Section~\ref{sec periods} we investigate $\Gamma_{\zeta}$ and their dependence on $a$. We define a sub-lattice $\tilde{\Gamma}_a \subset \Gamma_{\zeta}$ and show that the map $T: a \mapsto \tilde{\Gamma}_a$ extends continuously to general $a$. The isospectral sets $I(a)$ of polynomials $a$ with one or two unitary double roots contain the spectral genus one and spectral genus zero solutions, respectively. In Section~\ref{Sec:g1} we calculate the polynomial Killing fields of these solutions with $g \leq 1$ in terms of elliptic functions and show that the corresponding restriction of the map $T: a \mapsto \tilde{\Gamma}_a$ is surjective. In Section~\ref{sec willmore} we show that for a sub-lattice $\hat{\Gamma}_a \subset \tilde{\Gamma}_a$ all requirements from quaternionic function theory~\cite{PP} are met in order to define a conformal map $\imm_a: \mathbb{C}/\hat{\Gamma}_a \to \mathbb{H}$. For $g \leq 1$, a theorem of~\cite{Bo} shows that these maps are constrained Willmore, i.e.\ critical points of the restriction of the Willmore functional
\begin{equation}
W := \int_{\mathbb{C}/\hat{\Gamma}_a} H^2 \,dA
\label{Willmore}
\end{equation}
to the space of all conformal maps $\imm_a:\mathbb{C}/\hat{\Gamma}_a \to \mathbb{H}$. Here, $H$ is the mean curvature vector and $dA$ the induced volume form on $\mathbb{C}/\hat{\Gamma}_a$. It follows from Section~\ref{Sec:g1} that all elliptic curves occur in this family. Finally, we plot the Willmore functional in dependence of the conformal class.
%
%
%
\section{The Isospectral Sets}\label{sec iso}
Since all potentials in $\mathcal{P}_2$ are traceless, their eigenvalues are determined by the polynomials $a$ via~\eqref{det a}. Due to~\eqref{integrals of motion}, the coefficients of $a$ are constant along the integral curves of these vector fields. In the following theorem we investigate the global structure of these integral curves in dependence of the polynomials $a$. We distinguish between integral curves in $I(a)$ of polynomials $a$ in the following sets:
\begin{align*}
\mathcal{M}_2:&= \{a\in\mathbb{C}^4[\lambda]\, |\, \lambda a(\lambda) = \det(\zeta) \text{ for a } \zeta\in\mathcal{P}_2\},\\
\mathcal{M}_2^1 :&= \{a\in\mathcal{M}_2\mid a\text{ has four pairwise distinct simple roots absent }\mathbb{S}^1\},\\
\mathcal{M}_2^2 :&= \{a\in\mathcal{M}_2\mid a\text{ has one double root on }\mathbb{S}^1\text{ and two simple roots absent }\mathbb{S}^1\},\\
\mathcal{M}_2^3 :&=\{a\in\mathcal{M}_2\mid a \text{ has two distinct double roots on }\mathbb{S}^1\},\\
\mathcal{M}_2^4 :&=\{a\in\mathcal{M}_2\mid a \text{ has a fourth-order root on }\mathbb{S}^1\},\\
\mathcal{M}_2^5 :&=\{a\in\mathcal{M}_2\mid a \text{ has two distinct double roots absent }\mathbb{S}^1\}.
\end{align*}
Clearly $\mathcal{M}_2$ is the disjoint union $\mathcal{M}_2^1\cup\mathcal{M}_2^2\cup\mathcal{M}_2^3\cup\mathcal{M}_2^4\cup\mathcal{M}_2^5$. By definition of $\mathcal{P}_2$ the highest and lowest order coefficients of $a$ are equal to one. In terms of $\alpha$, $\beta$ and $\gamma$ the coefficients of $a$ take the form
\begin{gather}
a(\lambda)=\lambda^4+a_1\lambda^3+a_2\lambda^2+\bar{a}_1\lambda+1\quad\mbox{with}\nonumber\\
\begin{aligned}\label{coefficients a}
a_1&=-\bar\alpha^2-\beta\gamma^{-1}-\bar\beta\gamma\in\mathbb{C}\quad\mbox{and}&a_2&=2\alpha\bar\alpha+\beta\bar\beta+\gamma^2+\gamma^{-2}\in\mathbb{R}.
\end{aligned}\end{gather}
Consequently, the map $\zeta\mapsto a$ can be represented as
\begin{align}
\mathbb{C}\times\mathbb{C}\times\mathbb{R}^+&\rightarrow\mathbb{C}\times\mathbb{R}^+,&(\alpha,\beta,\gamma)&\mapsto(a_1,a_2)\label{func_f}
\end{align}
with $a_1$ and $a_2$ as in~\eqref{coefficients a}. For unitary $\lambda$ the matrices $\lambda^{-\frac{3}{2}}\zeta$ are anti-hermitian. As the determinant $\det$ is the square of a norm on such matrices, $\lambda^2a(\lambda)\ge 0$ for such $\lambda$ and $a\in\mathcal{M}_2$. Therefore, 
$$\mathcal{M}_2=\{a\in\mathbb{C}^4[\lambda]\, |\, a(0)=1,\,\lambda^4\overline{a(\bar\lambda^{-1})}=a(\lambda),\,\lambda^{-2}a(\lambda)\ge 0 \text{ for }\lambda\in\mathbb{S}^1\}.$$
Now we state the main theorem of this section:
\begin{theorem}\label{thm orbits}
The vector fields~\eqref{LaxZeta} induce the action~\eqref{eq:group action} of $\mathbb{R}^2$ on $\mathcal{P}_2$. The isospectral sets $I(a)\subset\mathcal{P}_2$ are compact and have the following properties:
\begin{enumerate}
\item For $a\in \mathcal{M}_2^1$ the isospectral sets $I(a)$ are two-dimensional compact submanifolds of $\mathcal{P}_2$ with transitive group action~\eqref{eq:group action}, i.e.
$$I(a)=\{\phi(x,y)\zeta\mid(x,y)\in\mathbb{R}^2\}\quad\text{for any }\zeta\in I(a).$$
\item For $a\in\mathcal{M}_2^2$ the isospectral sets $I(a)$ are one-dimensional compact subsets of $\mathcal{P}_2$ with transitive group action~\eqref{eq:group action}.
\item For $a\in\mathcal{M}_2^3\cup\mathcal{M}_2^4$ the isospectral set $I(a)$ consists of a single fixed point of the group action~\eqref{eq:group action}.
\item For $a\in\mathcal{M}_2^5$ the isospectral set is a union of two disjoint orbits of the group action~\eqref{eq:group action}:
$$I(a)=\underbrace{\{\zeta\in I(a)\mid\zeta|_{\lambda\in\{\lambda_1,\bar{\lambda}_1^{-1}\}}\neq 0\}}_{K_a}\cup\underbrace{\{\zeta\in I(a)\mid\zeta|_{\lambda\in\{\lambda_1,\bar{\lambda}_1^{-1}\}}=0\}}_{L_a}.$$
Here $\lambda_1$ and $\bar{\lambda}_1^{-1}$ are the double roots of $a$, $K_a$ is a two-dimensional non-compact submanifold of $\mathcal{P}_2$ with closure $\bar{K}_a=I(a)$ and $L_a$ consists of a single point.
\end{enumerate}
\end{theorem}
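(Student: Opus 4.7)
The first step is to exploit the identity
\[
 a_2 \;=\; 2|\alpha|^2+|\beta|^2+\gamma^2+\gamma^{-2}
\]
from~\eqref{coefficients a}. Each summand is non-negative, and $\gamma^2+\gamma^{-2}$ is proper on $\mathbb{R}^+$, so fixing $a_2$ bounds $|\alpha|$ and $|\beta|$ and confines $\gamma$ to a closed subinterval of $\mathbb{R}^+$. Hence $I(a)$ is closed and bounded, i.e.\ compact, in $\mathcal{P}_2$. Since the right-hand sides of~\eqref{LaxZeta} are polynomial in the real parameters and the integral curves are trapped in the compact set $I(a)$ by~\eqref{integrals of motion}, the usual extension theorem for ODEs yields global existence on $\mathbb{R}^2$, which is exactly what is required for the action~\eqref{eq:group action} to be defined.

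\textbf{A rank dichotomy and cases (i)--(iii).} The dimensions of $I(a)$ and of the orbit through $\zeta$ are governed by the same rank condition. I would compute the Jacobian of $(\alpha,\bar\alpha,\beta,\bar\beta,\gamma)\mapsto(a_1,\bar a_1,a_2)$ from~\eqref{func_f}, and in parallel the span of the two commutators $[\zeta,U(\zeta)]$ and $[\zeta,V(\zeta)]$, and verify that both drops of rank occur precisely at those $\zeta$ which vanish at a multiple root of $a$ in $\mathbb{C}^{\ast}$. At every other point $I(a)$ is a smooth $2$-manifold and the two flows span its tangent space. In case~(i) this holds everywhere; orbits are therefore open in a compact smooth $2$-manifold, so after establishing connectedness of $I(a)$ the transitivity claim follows. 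In cases~(ii) and~(iii) the offending multiple root sits on $\mathbb{S}^1$, and the anti-Hermiticity of $\lambda^{-3/2}\zeta$ for unital $\lambda$ together with the fact that $\det\zeta$ has a double zero at $\lambda_0$ forces $\zeta(\lambda_0)=0$ as a matrix. This extra rigidity lowers the dimension uniformly and yields a transitive action on a compact $1$-manifold in case~(ii), while pinning $\zeta$ down to a unique fixed point in case~(iii).

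\textbf{The main obstacle, case~(iv).} The hard case is $\mathcal{M}_2^5$, where the two double roots $\lambda_1,\,1/\bar\lambda_1$ lie off $\mathbb{S}^1$; the anti-Hermiticity argument is no longer available, and $\zeta(\lambda_1)$ can be of rank $0$ or of rank $1$. The rank-$0$ locus $L_a$ is cut out by polynomial equations whose solution, using~\eqref{coefficients a} together with the factorisation of $a$, should reduce to a unique $\zeta_1\in I(a)$; this $\zeta_1$ is automatically a common zero of both commutators and thus a fixed point. On the complement $K_a$ the rank dichotomy of the previous paragraph still produces a smooth $2$-dimensional submanifold on which the two flows are linearly independent. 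The core difficulty is to establish three properties of $K_a$ simultaneously: that it is a single orbit, that it is non-compact, and that $\bar K_a=I(a)$. I would do this by linearising both flows at the equilibrium $\zeta_1$ and identifying it as a hyperbolic fixed point with eigenvalues governed by $\lambda_1$; the resulting stable and unstable manifolds should reach into $K_a$, yielding at once an orbit running into $\zeta_1$ in infinite time (non-compactness), the density $\bar K_a=I(a)$, and connectivity of $K_a$ as one orbit.
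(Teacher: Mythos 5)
Your outline of the compactness and global-existence step and of the rank computation for the submanifold structure matches the paper's proof. However, there are two genuine gaps, and they sit at exactly the places where the real work lies. First, in case (i) you reduce transitivity to ``establishing connectedness of $I(a)$'' but give no argument for it; this is the crux of the statement, not a formality, since openness and closedness of the orbits only identifies them with the connected components of $I(a)$. The paper proves connectedness by showing that $(x,y)\mapsto\gamma(\phi(x,y)\zeta)$ is a Morse function on each orbit torus $\mathbb{R}^2/\Gamma_\zeta$ whose critical points are precisely the off-diagonal potentials in $I(a)$; there are exactly four of these (one for each way of distributing the roots of $a$ between $B(\lambda)$ and $C(\lambda)$), the Morse inequalities on a $2$-torus force every orbit to contain all four, and hence any two components of $I(a)$ intersect and coincide. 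Some argument of this global nature is indispensable. The same issue recurs in case (ii): a compact $1$-manifold may consist of several circles; the paper settles it by factoring out the unital double root and reducing to the explicit genus-one isospectral set $\{\hat{\alpha}^2+\hat{\beta}^2+\hat{\beta}^{-2}=\hat{a}_1\}$ in $\mathbb{R}\times\mathbb{R}^+$, which is visibly a single oval.

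Second, in case (iv) the hyperbolicity/stable-manifold idea does not deliver what you claim. Linearising at the fixed point $\zeta_1\in L_a$ can show that \emph{some} orbits in $K_a$ accumulate at $\zeta_1$ and are therefore non-compact, but it gives no control over orbits whose closure might avoid $L_a$, and hence does not show that $K_a$ is connected, i.e.\ a single orbit. The paper's argument is genuinely global: every orbit closure in $K_a$ contains a maximum and a minimum of $\gamma$, these are necessarily off-diagonal potentials, and $I(a)$ contains exactly three of them ($\zeta_1$ with $\gamma=1$ and two elements of $K_a$ with $\gamma$ on either side of $1$); an explicit parametrisation of the level set $\{\gamma=1\}\cap I(a)$ exhibits it as a figure-eight with its node at $\zeta_1$, every other point of which lies on an orbit along which $\gamma$ crosses $1$, so that every such orbit contains both off-diagonal elements of $K_a$ and all of them coincide. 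You would need to supply something equivalent to this level-set and counting argument to close case (iv).
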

\begin{proof}
For compactness of $I(a)$, the closedness follows from continuity of~\eqref{func_f} and the boundedness of $\alpha$, $\beta$, $\gamma$ and $\gamma^{-1}$ results from the formula for $a_2$~\eqref{coefficients a}. Hence there exists for all $a\in\mathcal{M}_2$ an $\epsilon>0$ such that for all initial $\zeta_0\in I(a)$ the Polynomial Killing field is defined on $(x,y)\in B(0,\epsilon)\subset\mathbb{R}^2$. Since they stay in the isospectral set of the initial $\zeta\in\mathcal{P}_2,$ the domain of the Polynomial Killing fields extends to $(x,y)\in\mathbb{R}^2$. Hence the two commuting local flows induced by~\eqref{LaxZeta} extend to the action~\eqref{eq:group action} of $\mathbb{R}^2$.\vspace{3mm}

1. In order to show that $I(a)$ is a two-dimensional submanifold for $a\in\mathcal{M}_2^1$, we shall prove that the Jacobian of $\mathcal{P}^2\to\mathbb{C}\times\mathbb{R},\;\zeta\mapsto(a_1,a_2)$ has full rank on $I(a)$ and apply the implicit function theorem. Let $a\in\mathcal{M}_2^1$ and consider 
	\begin{align*}
		\zeta = 
		\begin{pmatrix} 
			\alpha\lambda-\bar\alpha\lambda^2 & -\gamma^{-1}+\beta\lambda-\gamma\lambda^2\\
			\gamma\lambda-\bar\beta\lambda^2+\gamma^{-1}\lambda^3 & -\alpha\lambda+\bar\alpha\lambda^2
		\end{pmatrix}
		=
		\begin{pmatrix} 
			A(\lambda) & B(\lambda)\\
			\lambda\,C(\lambda) & -A(\lambda)
		\end{pmatrix}
		\in I(a).
	\end{align*}
The potential $\zeta$ is rootless as a root $\lambda\in\mathbb{C}\setminus\{0\}$ of $\zeta$ would lead to a double root in $a$ which is excluded by $a\in\mathcal{M}_2^1$. We calculate the Jacobian $J_f$ of the map $(\alpha,\bar{\alpha},\beta,\bar{\beta},\gamma)\mapsto(a_1,\bar{a}_1,a_2)$ at $\zeta$:
\begin{align*}
J_f&=\begin{pmatrix}
0 & -2\bar{\alpha} & -\gamma^{-1} & -\gamma & (\beta\gamma^{-2}-\bar{\beta})\\
-2\alpha & 0 & -\gamma & -\gamma^{-1} & (\bar{\beta}\gamma^{-2}-\beta)\\
2\bar{\alpha} & 2\alpha & \bar{\beta} & \beta & 2(\gamma-\gamma^{-3})
\end{pmatrix}.
\end{align*}
First assume $\alpha\neq 0$. Then the Jacobian can be transformed via Gaussian elimination into
	\begin{align*}
	\begin{pmatrix}
		-2\alpha & 0 & -\gamma & -\gamma^{-1} & (\bar\beta\gamma^{-2}-\beta)\\
		0 & -2\bar{\alpha} & -\gamma^{-1} & -\gamma & (\beta\gamma^{-2}-\bar{\beta})\\
		0 & 0 & \theta & \iota & \kappa
	\end{pmatrix}
	\end{align*}
	where
	\begin{align*}
	\theta &=\bar{\beta}-\gamma\alpha^{-1}\bar{\alpha}-\gamma^{-1}\bar{\alpha}^{-1}\alpha
	=-\alpha^{-1}\bar{\alpha}\,\,C(\bar{\alpha}^{-1}\alpha)\\
	\iota &= \beta-\gamma^{-1}\alpha^{-1}\bar{\alpha}-\gamma\bar{\alpha}^{-1}\alpha
	=\alpha^{-1}\bar{\alpha}\,\,B(\bar{\alpha}^{-1}\alpha)\\
	\kappa &= 2(\gamma-\gamma^{-3})+\bar{\alpha}^{-1}\alpha(\beta\gamma^{-2}-\bar{\beta})+\alpha^{-1}\bar{\alpha}(\bar\beta\gamma^{-2}-{\beta}).
	\end{align*}
	If the Jacobian $J_f$ has no full rank, then $\theta=\iota=\kappa=0$. But then $\zeta$ has a root in $\bar{\alpha}^{-1}\alpha$.\vspace{3mm}
	
If $\alpha=0$, $A(\lambda)=0$ and consequently, $\zeta$ is off-diagonal and has a root if and only if the polynomials $B(\lambda)$ and $C(\lambda)$ have a common root. This is the case if and only if the resultant $\res(B,C)$ equals zero. Since $\zeta$ has no root the resultant must be nonzero
$$\res(B,C)\neq 0.$$
We calculate the resultant as the determinant of the Sylvester matrix (compare \cite{B:LinAlg}):
	\begin{align*}
	\res(B,C) &=
	\det
	\begin{pmatrix}
		-\gamma & \beta & -\gamma^{-1} & 0\\
		0 & -\gamma & \beta & -\gamma^{-1}\\
		\gamma^{-1} & -\bar{\beta} & \gamma & 0\\
		0 & \gamma^{-1} & -\bar{\beta} & \gamma
	\end{pmatrix}\nonumber
	&\hspace{-2.5mm}=\beta^2+\bar{\beta}^2-\beta\bar{\beta}(\gamma^2+\gamma^{-2})+\gamma^4+\gamma^{-4}-2.
	\end{align*}
Since $\alpha=0$, the first two columns of $J_f$ disappear and we conclude by showing that the determinant of the remaining $3\times 3$ matrix is nonzero.
	\begin{align*}
	\det
	\begin{pmatrix}
		-\gamma^{-1} & -\gamma & (\beta\gamma^{-2}-\bar\beta)\\
		-\gamma & -\gamma^{-1} & (\bar\beta\gamma^{-2}-\beta)\\
		\bar{\beta} & \beta & 2(\gamma-\gamma^{-3})
	\end{pmatrix}
	&=-2\gamma^{-1}\,\res(B,C)\not=0.
	\end{align*}
This shows that $J_f$ has full rank at rootless $\zeta$ and the assertion is proven.\vspace{3mm}

For transitivity, we first show that the vector fields~\eqref{LaxZeta} span the tangent space of $I(a)$ at rootless $\zeta$. In fact, the only matrix which commutes with a non-trivial traceless two-by-two matrix must be a multiple of the same matrix. Therefore, a linear combination of the vector fields~\eqref{LaxZeta} vanishes if and only if a real linear combination of $U(\zeta)$ and $V(\zeta)$ is the product of $\zeta$ with a meromorphic function $g(\lambda)$. Since $\gamma\ne0,$ the coefficients of $\lambda$ and $\lambda^{-1}$ in $U(\zeta)$ and $V(\zeta)$ are linearly independent over $\mathbb{R}$, respectively. Besides a first order pole at $\lambda=0,$ all other poles of $g$ have to be roots of $\zeta$ and $\lambda^2g(\lambda)$ is bounded for $\lambda\to\infty$. Hence, for rootless $\zeta$ the map $g$ vanishes and the vector fields~\eqref{LaxZeta} span the tangent space of $I(a)$. By the implicit function theorem, for $a\in\mathcal{M}_2^1$ the orbits of~\eqref{eq:group action} are open and closed in $I(a)$, in particular compact. Being continuous images of $\mathbb{R}^2,$ they are connected and consequently, the compact connected components of $I(a)$.\vspace{3mm}

We just proved that for $a\in\mathcal{M}_2^1$ and for all $\zeta\in I(a)$ the map $(x,y)\mapsto\phi(x,y)\zeta$ is an immersion onto a compact connected component of $I(a)$. Therefore it induces a diffeomorphism from $\mathbb{R}^2/\Gamma_{\zeta}$ (see~\eqref{eq:isotropy}) onto this compact connected component. In particular, $\Gamma_{\zeta}$ is a two-dimensional lattice in $\mathbb{R}^2$ and the connected components of $I(a)$ are two-dimensional tori. Now we claim that the function
\begin{align}\label{eq:morse}
(x,y)&\mapsto\gamma(\phi(x,y)\zeta)
\end{align}
is a Morse function on $\mathbb{R}^2/\Gamma_{\zeta}$. Explicit calculation of~\eqref{LaxZeta} shows that the critical points are those $(x,y)\in\mathbb{R}^2$ with off-diagonal $\phi(x,y)\zeta$. Such potentials are uniquely determined by distributing the roots of $a$ into two groups of roots of $B(\lambda)$ and $C(\lambda)$. There are exactly four such off-diagonal $\zeta$ in $I(a)$. Another calculation shows that the Hessian is non-degenerate and~\eqref{eq:morse} is a Morse function on $\mathbb{R}^2/\Gamma_{\zeta}$. The Morse inequalities show that every orbit contains the four critical points, one maximum, one minimum and two saddle points. An explicit calculation of the Hessian at the critical points yields the same assertion. Consequently, any two connected components of $I(a)$ have at least four common elements and are equal. This finishes the proof of part~1.\vspace{3mm}

2. Let $a\in\mathcal{M}_2^2$. For $\zeta\in\mathcal{P}_2$ and for unitary $\lambda$ the product $\lambda^{-\frac{3}{2}}\zeta$ is an anti-hermitian two-by-two matrix. The determinant is on such matrices the square of a norm. Consequently, $\zeta$ vanishes at all unitary roots of $a(\lambda)$. Therefore, all $\zeta\in I(a)$ are products of potentials of less degree with normalized polynomials in $\mathbb{C}[\lambda]$, whose square divides $a$.

We show the statement analogously to part~1 and transfer the arguments from part~1 by first introducing the set of potentials for genus one:
$$\mathcal{P}_1:=\left\{\left.\hat{\zeta}=\begin{pmatrix} 
0 & -\hat{\beta}^{-1}\\
0 & 0
\end{pmatrix}
+
\begin{pmatrix} 
i\hat{\alpha} & -\hat{\beta}\\
\hat{\beta} & -i\hat{\alpha}
\end{pmatrix}
\hat{\lambda}
+
\begin{pmatrix} 
0 & 0\\
\hat{\beta}^{-1} & 0
\end{pmatrix}
\hat{\lambda}^2\right|\,\hat{\alpha}\in\mathbb{R},\hat{\beta}\in\mathbb{R}^+\right\}.$$
For these potentials $\hat{\zeta}$ and for unitary $\hat{\lambda}$, the matrix $\hat{\lambda}^{-1}\hat{\zeta}$ is anti-hermitian. The determinant equals
\begin{align*}
\det\left(\hat{\zeta}\right)=\hat{\lambda}\left(\hat{\lambda}^2+(\hat{\alpha}^2+\hat{\beta}^2+\hat{\beta}^{-2})\hat{\lambda}+1\right)
=:\hat{\lambda} \hat{a}(\hat{\lambda})=\hat{\lambda}(\hat{\lambda}^2+\hat{a}_1\hat{\lambda}+1),
\end{align*}
where $\hat{a}(\hat{\lambda})\in\mathcal{M}_1$ with
\begin{align*}
\mathcal{M}_1:&=\{\hat{a}\in\mathbb{C}^2[\hat{\lambda}]\, |\, \hat{\lambda}\hat{a}(\hat{\lambda})=\det(\hat{\zeta})\text{ for a }\hat{\zeta}\in\mathcal{P}_1\}\\
&=\{\hat{a}\in\mathbb{C}^2[\hat{\lambda}]\,|\,\hat{a}(0)=1,\,\hat{\lambda}^2\overline{\hat{a}(\bar{\hat{\lambda}}^{-1})}=\hat{a}(\hat{\lambda}),\,\hat{\lambda}^{-1}\hat{a}(\hat{\lambda})\ge 0 \text{ for }\hat{\lambda}\in\mathbb{S}^1\}.
\end{align*}
After redefining the correspondent sets and using the same subscription as in $g=2$, we consider the function
\begin{align*}
\mathbb{R}\times\mathbb{R}^+\to \mathbb{R}^+,\,
\begin{pmatrix}
\hat{\alpha}\\
\hat{\beta}
\end{pmatrix}
\mapsto
\hat{a}_1 = \hat{\alpha}^2+\hat{\beta}^2+\hat{\beta}^{-2},
\end{align*} and one can easily verify that the gradient of this functions vanishes if and only if
\begin{align*}
\begin{pmatrix}
2\hat{\alpha}\\
2\hat{\beta}-2\hat{\beta}^{-3}
\end{pmatrix}
=0
&\Leftrightarrow
\hat{\zeta} = \left[
	\begin{pmatrix}
	0 & -1\\
	0 & 0
	\end{pmatrix}
	+
	\begin{pmatrix}
	0 & 0\\
	1 & 0
	\end{pmatrix}
	\hat{\lambda}	
	\right]
	(\hat{\lambda}+1).
\end{align*}
In particular, if $\hat{a} \in \mathcal{M}_1$ has pairwise different roots, then$$\hat{I}(\hat{a}) := \{\hat{\zeta} \in \mathcal{P}_1 \left|\right. \det{\hat{\zeta}} = \hat{\lambda} \hat{a}(\hat{\lambda}) \}$$
is a compact, one-dimensional submanifold of $(\hat{\alpha},\hat{\beta}) \in \mathbb{R}\times\mathbb{R}^+$. The corresponding Polynomial Killing fields $\hat{\zeta}:\mathbb{R}^2\to\mathcal{P}_1,\,(\hat{x},\hat{y})\mapsto \hat{\zeta}(\hat{x},\hat{y})$ solve the Lax equations
\begin{align*}
\frac{\partial \hat{\zeta}}{\partial \hat{x}}=[\hat{\zeta},\hat{U}(\hat{\zeta})]\quad\qquad\frac{\partial \hat{\zeta}}{\partial \hat{y}}=[\hat{\zeta},\hat{V}(\hat{\zeta})]
\end{align*}
with $\hat{\zeta}(0) \in\mathcal{P}_1$ and
\begin{align*}
\hat{U}(\hat{\zeta}) :=
\begin{pmatrix}
i\hat{\alpha} &  -\hat{\beta}^{-1}\hat{\lambda}^{-1}-\hat{\beta}\\
\hat{\beta}+\hat{\beta}^{-1}\hat{\lambda} & -i\hat{\alpha}
\end{pmatrix}\quad
\hat{V}(\hat{\zeta}) :=
i
\begin{pmatrix}
0 &  -\hat{\beta}^{-1}\hat{\lambda}^{-1}+\hat{\beta}\\
\hat{\beta}-\hat{\beta}^{-1}\hat{\lambda} & 0
\end{pmatrix}.
\end{align*}
The entries $\hat{\alpha}:\mathbb{R}^2\to\mathbb{R}$ and $\hat{\beta}:\mathbb{R}^2\to\mathbb{R}^+$ of a Polynomial Killing Field $\hat{\zeta}$ satisfy 
\begin{align*}
\frac{\partial \hat{\alpha}}{\partial \hat{x}}&=0,&\frac{\partial \hat{\alpha}}{\partial \hat{y}} &= 2(\hat{\beta}^{-2}-\hat{\beta}^{2}),&\frac{\partial \hat{\beta}}{\partial \hat{x}}&=0,&\frac{\partial \hat{\beta}}{\partial \hat{y}}&=2\hat{\alpha}\hat{\beta}.
\end{align*}
We now transform these Polynomial Killing fields to the Polynomial Killing fields $\zeta$ with $\zeta(0) \in I(a)$ and $a \in \mathcal{M}_2^2$. Let $e^{2i\varphi}$ be the double root and $\hat{\lambda}_1 e^{-2i\varphi}$, $\hat{\lambda}_1^{-1}e^{-2i\varphi}$ the two simple roots of $a(\lambda)$ with $\hat{\lambda}_1 \in (-1,0)$. The transformation is conducted by
\begin{align*}
\zeta(x, y):=e^{3i\varphi}
\begin{pmatrix}
1 & 0 \\
0 & ie^{i\varphi}
\end{pmatrix}
\hat{\zeta}(\hat{x}, \hat{y})
\begin{pmatrix}
1 & 0 \\
0 & -ie^{-i\varphi}
\end{pmatrix} (\lambda - e^{2i\varphi}).
\end{align*}
Here, the parameters $(\hat{x},\hat{y}, \hat{\lambda})$ of $\hat{\zeta}$ are related to the corresponding parameters $(x,y,\lambda)$ of $\zeta$ via
\begin{align*}
\begin{pmatrix} \hat{x} \\ \hat{y} \end{pmatrix} & = 
\begin{pmatrix} \cos(\varphi) & -\sin(\varphi) \\ \sin(\varphi) & \cos(\varphi) \end{pmatrix}
\begin{pmatrix} x \\ y \end{pmatrix},&\hat{\lambda}&=-e^{2i\varphi}\lambda.
\end{align*}
By an explicit calculation $\hat{\zeta}(0,0)\mapsto\zeta(0,0)$ is a homomorphism from $\hat{I}(\hat{a})$ onto $I(a)$.
The analogously applied arguments from part~1 show that for $\hat{a}(\hat{\lambda}) = (\hat{\lambda} - \hat{\lambda}_1)(\hat{\lambda} - \hat{\lambda}_1^{-1})$ the set $\hat{I}(\hat{a})$ is a compact one-dimensional manifold with the action $(\hat{y},\hat{\zeta})\mapsto\hat{\phi}(\hat{y})\hat{\zeta}$ of $\hat{y}\in\mathbb{R}$. Each connected component of $\hat{I}(\hat{a})$ is diffeomorphic to $\mathbb{S}^1$. The action is transitive and $\hat{I}(\hat{a})$ connected, since the Morse function $\hat{y}\mapsto\hat{\beta}(\hat{y})$ has two critical points in $\hat{I}(\hat{a})$.\vspace{3mm}

3. The elements $a\in\mathcal{M}_2^3\cup\mathcal{M}_2^4$ take the form $a(\lambda)=(\lambda-\lambda_1)^2(\lambda-\bar{\lambda}_1)^2$ with $|\lambda_1|=1$. The same argument as in the beginning of part~2 shows that all potentials $\zeta\in I(a)$ are products of potentials of first degree with $(\lambda-\lambda_1)(\lambda-\bar{\lambda}_1)$. By definition of $\mathcal{P}_2$ such $\zeta$ are off-diagonal with $\gamma^2=\lambda_1\bar{\lambda}_1=1$. Hence, there is only one such element of the form
$$\zeta=\left[
	\begin{pmatrix}
	0 & -1\\
	0 & 0
	\end{pmatrix}
	+
	\begin{pmatrix}
	0 & 0\\
	1 & 0
	\end{pmatrix}
	\lambda	
	\right]
	(\lambda-\lambda_1)(\lambda-\bar \lambda_1).$$
Since $I(a)$ is invariant with respect to~\eqref{eq:group action}, the vector fields~\eqref{LaxZeta} vanish on $I(a)$.\vspace{3mm}

4. The elements $a\in\mathcal{M}_2^5$ take the form $(\lambda-\lambda_1)^2(\lambda-\bar{\lambda}_1^{-1})^2$ with $|\lambda_1|\ne1$ and $\lambda_1^2\bar{\lambda}^{-2}_1=1$. The condition $\lambda^{-2}a(\lambda)\ge0$ for $\lambda\in\mathbb{S}^1$ implies $\lambda_1\in\mathbb{R}\setminus\{-1,0,1\}$. For $\zeta\in L_a$ one can argue analogously to part~3 and conclude that $L_a$ contains only the following potential
	\begin{align*}
	  \zeta=\left[
	\begin{pmatrix}
	  0 & -1\\
	  0 & 0
	\end{pmatrix}
	+
	\begin{pmatrix}
	  0 & 0\\
	  1 & 0
	\end{pmatrix}
	  \lambda	
	  \right]
	  (\lambda-\lambda_1)(\lambda-\lambda_1^{-1}).
	\end{align*}
Due to the form of~\eqref{LaxZeta}, the roots of $\zeta$ are preserved along~\eqref{eq:group action} and $L_a$ is an orbit.\vspace{3mm}

For $K_a$ the submanifold property as well as its dimension follows from the corresponding arguments in case~1. These arguments show also that the orbits of $\zeta\in K_a$ are open and closed in $K_a$ and furthermore, the orbits have to be the 
connected components of $K_a$. By lack of compactness of these orbits we consider the closures of these orbits rather than the orbits themselves. Since $I(a)$ is compact, the closures are compact and either equal to the orbits or to the union with $L_a$. The closure of each orbit contains a maximum and a minimum of the function~\eqref{eq:morse} which can either be a local extremum in $K_a$ or the only element of $L_a$. In particular, these elements are off-diagonal potentials. In the present case $I(a)$ contains three such off-diagonal potentials. One is of course the trivial solution of~\eqref{LaxZeta} in $L_a$. A direct calculation shows that \eqref{eq:morse} takes on $L_a$ the value $\gamma = 1$ and at the other two elements in $K_a$ values larger or smaller than $\gamma = 1$. In order to proceed, we now study the subset of potentials with $\gamma = 1$:
\begin{eqnarray*}
a(\lambda) & = & (\lambda - \lambda_1)^2(\lambda - \lambda_1^{-1})^2 \\
& = & \lambda^4 - 2(\lambda_1 + \lambda_1^{-1})\lambda^3 + (\lambda_1^{-2} + 4 + \lambda_1^2)\lambda^2 -2(\lambda_1 + \lambda_1^{-1})\lambda + 1.
\end{eqnarray*}
Due to \eqref{coefficients a} we obtain
\begin{align*}
-2(\lambda_1 + \lambda_1^{-1})=& -\bar{\alpha}^2 - \beta - \bar{\beta},&
\lambda_1^2 + 2 + \lambda_1^{-2} & = 2 \alpha\bar{\alpha} + \beta\bar{\beta}.
\end{align*}
The second equation implies $|\Re(\beta)|\le|\lambda_1+\lambda_1^{-1}|$ and the
first $\frac{\bar{\alpha}^2}{2}=\lambda_1+\lambda_1^{-1}-\Re(\beta)$. Hence,
$\alpha$ is imaginary for $\lambda_1<0$ and real for $\lambda_1>0$. We set
$\alpha=\sqrt{2}i\x$ and $\beta=-\z+i\y$ for $\lambda_1<0$ and
$\alpha=\sqrt{2}\x$ and $\beta=\z+i\y$ for $\lambda_1>0$. In both cases we
obtain
\begin{align*}
|\lambda_1+\lambda_1^{-1}|&=\x^2+\z,&|\lambda_1+\lambda_1^{-1}|^2&=4\x^2+\y^2+\z^2
&\mbox{with }(\x,\y,\z)\in\mathbb{R}^3.
\end{align*}
We eliminate $z$ by the first equation and arrive at:
\begin{align*}\x^4-(|\lambda_1+\lambda_1^{-1}|-2)\x^2+\y^2&=0,&
\Longleftrightarrow&&y&=\pm\x\sqrt{|\lambda_1+\lambda_1^{-1}|-2-\x^2}.
\end{align*}
Since $|\lambda_1+\lambda_1^{-1}|>2$ the $(\x,\y)$-graph looks like $\infty$
with an ordinary double point at $(\x,\y)=(0,0)$ which corresponds to the
single element of $L_a$. All other points are no critical
points of $\gamma$ and on the corresponding orbits $\gamma$ takes values
smaller and larger than  $1$. Consequently, the orbit of any $\zeta\in K_a$
with $\gamma=1$ contains both off-diagonal elements of $K_a$. This implies
first that any orbit of $K_a$ containes an offdiagonal element and second
that $K_a$ is a single orbit, connected, non-compact and dense in $I(a)$.
\end{proof}
\section{Lattices of Periods}\label{sec periods}
Due to Theorem~\ref{thm orbits}, the isospectral set $I(a)$ of each $a\in\mathcal{M}_2\setminus\mathcal{M}_2^5$ contains exactly one orbit and $\Gamma_{\zeta}$ from~\eqref{eq:isotropy} does not depend on $\zeta\in I(a)$. We identify $(x,y)\in\mathbb{R}^2$ with $x+i y\in\mathbb{C}$ and define
\begin{align}\label{lattice a}
\Gamma_a:=\{x+i y\in\mathbb{C}\mid\forall\zeta\in I(a):\phi(x,y)(\zeta)=\zeta\}.
\end{align}
This group $\Gamma_a$ is an abelian and normal subgroup of $\mathbb{C}$ with quotient group $\mathbb{C}/\Gamma_a$. For $a\in\mathcal{M}_2^1,$ Theorem~\ref{thm orbits} proves that $\Gamma_a$ is a discrete subgroup with compact quotient and therefore, a lattice
$$\Gamma_a=\omega_1\mathbb{Z}+\omega_2\mathbb{Z}$$
with (over $\mathbb{R}$) linearly independent  generators $\omega_1$, $\omega_2\in\mathbb{C}$. The choice of these generators is not unique. Given two lattices $\Gamma,\Gamma'\subset\mathbb{C}$ we call them {\it isomorphic} if they originate from one another through a rotation-dilation. In~\cite[Chapter~VI.1]{FB} it is proven, that each lattice $\Gamma$ in $\mathbb{C}$ is up to a rotation-dilation isomorphic to $\Gamma_\tau:=\mathbb{Z}+\mathbb{Z}\tau$ with
\begin{align}\label{fundamental}
\tau&\in\{\tau\in\mathbb{C}\mid\Im(\tau)>0,\,|\Re(\tau)|\le\tfrac{1}{2},\,\|\tau\|\ge 1\}.
\end{align}
Furthermore, the corresponding $\tau$ is unique up to the identifications of the following $\tau$:
\begin{align*}
-\frac{1}{2}+iy\sim\frac{1}{2}+iy&\mbox{ for }y\in[\tfrac{\sqrt{3}}{2},\infty),&-x+i\sqrt{1-x}\sim x+i\sqrt{1-x}&\mbox{ for }x\in[0,\tfrac{1}{2}].
\end{align*}
Let $\mathcal{F}$ denote the space of such $\tau$ with the quotient topology of the subset~\eqref{fundamental} of $\mathbb{C}$ divided by the relation $\sim$. Hence, there exists a unique map
\begin{align*}
T:\mathcal{M}_2^1&\to\mathcal{F},&a&\mapsto\tau_a,
\end{align*}
such that $\Gamma_a$ is isomorphic to $\Gamma_{\tau_a}$. We now want to prove that $T$ has a unique surjective continuous extension to $\mathcal{M}_2^2\cup\mathcal{M}_2^3$. In order to investigate the dependence of $\tau_a$ on $a\in\mathcal{M}_2^1$ we introduce for any $\omega\in\Gamma_a$ the monodromy $M_\omega$ with eigenvalues $\mu_\omega$. Let $\zeta:\mathbb{R}^2\to\mathcal{P}_2$ be a Polynomial Killing field with initial potential $\zeta_0\in\mathcal{P}_2$. The fundamental solution $F$ of the following system of ordinary differential equations is called frame:
\begin{align}
	\frac{\partial F}{\partial x}&=F U(\zeta),&\frac{\partial F}{\partial y}&=F V(\zeta),&F(0,0)&=\mathbb{1}.\label{ODE_frame}
\end{align}
The first equation in~\eqref{eq:commute} is called Maurer-Cartan equation and ensures that $F$ is indeed a function of $(x,y)\in\mathbb{R}^2$. For $a\in\mathcal{M}_2^3\cup\mathcal{M}_2^4$ both $U(\zeta)$ and $V(\zeta)$ are constant and $F$ is equal to $\exp(xU(\zeta)+yV(\zeta))$. For $a\in\mathcal{M}_2^2$ $F$ is calculated in~\cite{KSS}. We identify $(x,y)\in\mathbb{R}^2$ with $z=x+iy\in\mathbb{C}$ and consider $F$ as a function on $z\in\mathbb{C}$. For all $\omega\in\Gamma_a$ the value $M_\omega$ of $F$ at $\omega$ is called monodromy. Since $F^{-1}\zeta_0F$ solve~\eqref{LaxZeta} we have
$$\zeta=F^{-1}\zeta_0F.$$
In particular, for all $\omega\in\Gamma_a$ the monodromy $M_\omega$ commutes with $\zeta_0$ and maps the eigenspaces of $\zeta_0$ into themselves. Since $\zeta_0$ is traceless these eigenspaces can only have more than one dimension at the roots of $\zeta_0$. For $\zeta_0\in I(a)$ with $a\in\mathcal{M}_2^1$ they do not exist and the one-dimensional eigenspaces of each $\zeta_0$ are parametrized by the smooth Riemann surface
\begin{align}\label{eq:spectral curve}
\Sigma^\ast=\{(\lambda,\nu)\in(\mathbb{C}\setminus\{0\})\times\mathbb{C}\mid\det(\nu\unity-\zeta_0)=\nu^2+\lambda a(\lambda)=0\}.
\end{align}
The identity $\bar{\lambda}^4a(\bar{\lambda}^{-1})=\overline{a(\lambda)}$ induces the second of the following involutions:
\begin{align*}
\sigma:(\lambda,\nu)&\mapsto(\lambda,-\nu),&
\rho:(\lambda,\nu)&\mapsto(\bar{\lambda}^{-1},-\bar{\lambda}^{-3}\bar{\nu}).
\end{align*}
The fixed points of $\rho$ are the elements of $\Sigma^\ast$~\eqref{eq:spectral curve} with $\lambda\in\mathbb{S}^1$ since $\lambda^{-2}a(\lambda)\ge 0$ for such $\lambda$. The monodromies $M_\omega$ act on the one-dimensional eigenspaces of $\zeta_0$ as the multiplication with a function $\mu_\omega:\Sigma^\ast\to\mathbb{C}\setminus\{0\}$. The involutions act on $\mu_\omega$ as
\begin{align}\label{involutions mu}
\sigma^\ast\mu_\omega&=\mu_\omega^{-1},&\rho^\ast\mu_\omega&=\bar{\mu}^{-1}_\omega.
\end{align}
Our investigation of the map $a\mapsto \Gamma_a$ is based on the following description of $\Gamma_a$:
\begin{lemma}\label{characterisation 1}
For all $a\in\mathcal{M}_2^1$ the elements of $\Gamma_a$ are characterized as those $\omega\in\mathbb{C}$ such that the function $\exp(\omega\lambda^{-1}\nu)$ on $\Sigma^\ast$ factorizes into the product of a holomorphic function $\mu_\omega$ on $\Sigma^\ast$ obeying~\eqref{involutions mu} with a holomorphic function on $\Sigma^\ast$, which extends holomorphically to $\lambda=0$ and takes there the value $1$.
\end{lemma}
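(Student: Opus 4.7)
The plan is to identify $\mu_\omega$ with the Floquet multiplier of a Baker–Akhiezer eigenfunction for $\zeta_0$ and to read off the factorization of $\exp(\omega\lambda^{-1}\nu)$ from the essential singularity of that eigenfunction at the branch point over $\lambda=0$. I first fix $\zeta_0\in I(a)$ and the frame $F$ of~\eqref{ODE_frame}. Because $\zeta(x,y)=F(x,y)^{-1}\zeta_0 F(x,y)$, the condition $\omega\in\Gamma_a$ is equivalent to $F(\omega)$ commuting with $\zeta_0$. Since $a\in\mathcal{M}_2^1$ the potential $\zeta_0$ is rootless (as in part~1 of Theorem~\ref{thm orbits}), so its eigenlines are one-dimensional at every $P\in\Sigma^\ast$ and commuting with $\zeta_0$ amounts to $F(\omega)$ acting on each eigenline by the scalar $\mu_\omega(P)$, giving a holomorphic function on $\Sigma^\ast$.

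Next, I introduce the Baker–Akhiezer vector $\psi(z,P):=F(z)^{-1}v(P)$, where $v(P)\in\mathbb{C}^2$ is an eigenvector of $\zeta_0(\lambda)$ for the eigenvalue $\nu$, normalized so that its value at the branch point over $\lambda=0$ spans the kernel of the nilpotent matrix $\zeta_0(0)$. A direct computation gives $\partial_x\psi+U(\zeta)\psi=0$, $\partial_y\psi+V(\zeta)\psi=0$, and $\zeta(z)\psi=\nu\psi$. The key analytic step is the asymptotic analysis at $\lambda=0$: writing $\partial_z\psi=-\tfrac12(U-iV)\psi$ and $\partial_{\bar z}\psi=-\tfrac12(U+iV)\psi$, the second coefficient is polynomial in $\lambda$ while the first has a simple pole whose eigenvalues equal $\pm\nu/\lambda$ up to terms regular at $\lambda=0$ (this uses $\nu^2=-\lambda a(\lambda)$). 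Using $\nu$ rather than $\lambda$ as uniformizer at the branch point, a standard WKB argument yields
\begin{equation*}
\psi(z,P)=\exp\bigl(z\lambda^{-1}\nu\bigr)\,\tilde\psi(z,P),
\end{equation*}
with $\tilde\psi$ holomorphic and non-vanishing near the branch point, and with $\tilde\psi(z,P)|_{\lambda=0}$ equal (after the normalization) to the fixed kernel vector of $\zeta_0(0)$ for every $z$.

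For the forward direction, $\omega\in\Gamma_a$ makes $\zeta$ an $\omega$-periodic solution of~\eqref{LaxZeta}, so uniqueness for~\eqref{ODE_frame} forces $F(z+\omega)=F(\omega)F(z)$, which translates via the definition of $\mu_\omega$ into $\psi(z+\omega,P)=\mu_\omega(P)^{-1}\psi(z,P)$. Substituting the asymptotic produces
\begin{equation*}
\exp(\omega\lambda^{-1}\nu)=\mu_\omega(P)\cdot h(P),\qquad h(P):=\tilde\psi(z+\omega,P)\big/\tilde\psi(z,P),
\end{equation*}
where $h$ is independent of $z$ (its left-hand side is), extends holomorphically to $\lambda=0$, and satisfies $h=1$ at the branch point by the chosen normalization of $\tilde\psi$. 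The involution relations~\eqref{involutions mu} then follow from $\det F(\omega)=1$ (giving $\sigma^\ast\mu_\omega=\mu_\omega^{-1}$) and from $F(\omega)\in SU(2)$ for $\lambda\in\mathbb{S}^1$, a consequence of $\lambda^{-3/2}\zeta_0$ being anti-hermitian there, which yields $\rho^\ast\mu_\omega=\bar\mu_\omega^{-1}$. Conversely, if the factorization exists, the combined data (prescribed essential singularity at $\lambda=0$ and, via $\rho$, at $\lambda=\infty$; holomorphy on $\Sigma^\ast$; the involutions) characterize $\mu_\omega$ as the eigenvalue function of a unique $M\in SL(2,\mathbb{C})$ commuting with $\zeta_0$, and matching the WKB asymptotics at $\lambda=0$ identifies $M$ with $F(\omega)$, hence $\omega\in\Gamma_a$.

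The main obstacle is the WKB analysis at the branch point over $\lambda=0$. Since $\zeta_0(\lambda)$ degenerates to the nilpotent matrix $\zeta_0(0)$, the naive diagonalizing gauge blows up like $\nu^{-1}$, and one has to work with the uniformizer $\nu$ on $\Sigma^\ast$ to extract a genuinely holomorphic $\tilde\psi$. Pinning down the precise normalization that gives $h=1$ at the branch point—rather than some other nonzero constant—is the delicate step that drives the whole characterization.
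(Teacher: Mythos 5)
Your strategy is in substance the same as the paper's: everything reduces to splitting $\exp(\omega\lambda^{-1}\zeta_0)$ (equivalently the Baker--Akhiezer function $F(z)^{-1}v(P)$) into the monodromy $F(\omega)$ times a factor that is holomorphic at the branch point over $\lambda=0$ and equals $1$ there. The problem is that you assert precisely this splitting --- ``a standard WKB argument yields $\psi=\exp(z\lambda^{-1}\nu)\,\tilde\psi$ with $\tilde\psi$ holomorphic and non-vanishing near the branch point'' --- without proving it, and you yourself flag it at the end as ``the main obstacle'' and ``the delicate step that drives the whole characterization.'' It is genuinely nontrivial: $\lambda=0$ is a turning point where the two eigenvalues $\pm\nu/\lambda$ of the $dz$-coefficient coalesce, so the diagonalizing gauge degenerates like $\nu^{-1}$, and one needs the regularity of $\tilde\psi$ uniformly in $z$. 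The paper obtains this algebraically rather than asymptotically: a direct computation shows that the logarithmic derivative of $F^{-1}(z)\exp(z\lambda^{-1}\zeta_0)$ equals $\lambda^{-1}\zeta\,dz-U(\zeta)\,dx-V(\zeta)\,dy$, which is holomorphic at $\lambda=0$ with values in the traceless upper triangular matrices with real diagonal; hence $F^{-1}(z)\exp(z\lambda^{-1}\zeta_0)$ lies in the group $G_+$ of maps holomorphic at $\lambda=0$ with values in $\mathrm{SL}^+(2,\mathbb{C})$ there. That is exactly your missing WKB statement in global, $z$-uniform form, and it also pins down your normalization $h=1$: the $G_+$ factor commuting with $\zeta_0$ has the form $f(\lambda)\unity+g(\lambda)\zeta_0$, acts on the eigenline at $\lambda=0$ by $f(0)$, and the $\mathrm{SL}^+$ condition forces $f(0)=1$.

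The second gap is in your converse. From a given factorization you reconstruct $M=f(\lambda)\unity+g(\lambda)\zeta_0$ and claim that ``matching the WKB asymptotics at $\lambda=0$ identifies $M$ with $F(\omega)$.'' Matching asymptotics at one point does not identify two holomorphic $\mathrm{SL}(2,\mathbb{C})$-valued maps on $\mathbb{C}\setminus\{0\}$. What is needed is uniqueness of the splitting, i.e.\ $G_-\cap G_+=\{\unity\}$: writing $\exp(\omega\lambda^{-1}\zeta_0)=F(\omega)g_+=MN$ with $F(\omega),M$ satisfying the reality condition (unitary on $\mathbb{S}^1$) and $g_+,N\in G_+$, one gets $M^{-1}F(\omega)=Ng_+^{-1}$, which extends holomorphically to $\mathbb{P}^1$, is therefore constant, and equals $\unity$ by the normalization at $\lambda=0$. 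This Liouville-type argument is the paper's second essential ingredient and is absent from your proposal. With these two ingredients supplied your argument closes; as written it is an outline of the correct strategy rather than a proof.
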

\begin{proof}
Let $G$ denote the group of holomorphic maps $\mathbb{C}\setminus\{0\}\to\mathrm{SL}(2,\mathbb{C})$ with the two subgroups:
\begin{itemize}
\item $G_-$: The holomorphic maps $\mathbb{C}\!\setminus\!\{0\}\!\to\!\mathrm{SL}(2,\mathbb{C})$ which take on $\mathbb{S}^1$ values in $\mathrm{SU}(2,\mathbb{C})$,
\item $G_+$: The holomorphic maps $\mathbb{C}\to\mathrm{SL}(2,\mathbb{C})$ which take at $0\in\mathbb{C}$ values in $\mathrm{SL}\!^+(2,\mathbb{C})$.
\end{itemize}
Here, $\mathrm{SL}\!^+(2,\mathbb{C})$ denotes the subgroup of $\mathrm{SL}(2,\mathbb{C})$ consisting of upper triangular matrices with real and positive diagonal entries. The corresponding Lie algebras are denoted by $\mathfrak{g}$, the holomorphic maps $\mathbb{C}\setminus\{0\}\to\mathrm{sl}(2,\mathbb{C})$ with
\begin{itemize}
\item $\mathfrak{g}_-$: The holomorphic maps $\mathbb{C}\setminus\{0\}\to\mathrm{sl}(2,\mathbb{C})$ which take on $\mathbb{S}^1$ values in $\mathrm{su}(2,\mathbb{C})$,
\item $\mathfrak{g}_+$: The holomorphic maps $\mathbb{C}\to\mathrm{sl}(2,\mathbb{C})$ which take at $0\in\mathbb{C}$ values in $\mathrm{sl}\!^+(2,\mathbb{C})$.
\end{itemize}
Here, $\mathrm{sl}\!^+(2,\mathbb{C})$ denotes the traceless upper triangular matrices with real diagonal entries, which is the Lie algebra of $\mathrm{SL}\!^+(2,\mathbb{C})$. For all $\lambda\in\mathbb{C}\setminus\{0\}$ the elements $g$ of $G_-$ and $\gamma$ of $\mathfrak{g}_-$ obey
\begin{align*}
g(\lambda)&=\overline{(g^T)^{-1}(\bar{\lambda}^{-1})},& \gamma(\lambda)&=-\overline{\gamma^T(\bar{\lambda}^{-1})}.
\end{align*}
Therefore, the elements of $G_-\cap G_+$ and $\mathfrak{g}_-\cap\mathfrak{g}_+$ extend to holomorphic maps on $\mathbb{P}^1$, which are constant. This implies $G_-\cap G_+=\{\unity\}$ and $\mathfrak{g}_-\cap\mathfrak{g}_+=\{0\}$. For $\zeta_0\in\mathcal{P}_2$ let $z\mapsto F(z)$ denote the corresponding frame~\eqref{ODE_frame}. We calculate
\begin{multline*}
\tfrac{d}{dz}\left(F^{-1}(z)\exp(z\lambda^{-1}\zeta_0)\right)\exp(-z\lambda^{-1}\zeta_0)F(z)=\\
=-U(F^{-1}(z)\zeta_0F(z))dx-V(F^{-1}(z)\zeta_0F(z))dy+F^{-1}(z)\lambda^{-1}\zeta_0F(z)dz.
\end{multline*}
The 1-form on the right hand side takes values in $\mathfrak{g}_+$ because the 1-form
$$\lambda^{-1}\zeta dz-U(\zeta)dx-V(\zeta)dy=\begin{pmatrix}\frac{\alpha}{2}dz+\frac{\bar{\alpha}}{2}d\bar{z}-\bar{\alpha}\lambda dz&\beta dz+\gamma d\bar{z}-\gamma\lambda dz\\-\bar{\beta}\lambda dz-\gamma^{-1}\lambda d\bar{z}+\gamma^{-1}\lambda^2 dz&-\frac{\alpha}{2}dz-\frac{\bar{\alpha}}{2}d\bar{z}+\bar{\alpha}\lambda dz\end{pmatrix}.$$
takes values in $\mathfrak{g}_+$ at all $\zeta\in\mathcal{P}_2$. Therefore $\lambda\mapsto F^{-1}(z)\exp(z\lambda^{-1}\zeta_0)$ belongs to $G_+$ and $\lambda\mapsto F(z)$ is the unique element of $G_-$, whose product with an element of $G_+$ is equal to $\exp(z\lambda^{-1}\zeta_0)$. An element $\omega\in\mathbb{C}$ belongs to $\Gamma_a$, if and only if $F(\omega)$ and $\zeta_0$ commute. Equivalently, both factors $(g_-,g_+)\in G_-\times G_+$ of $\exp(\omega\lambda^{-1}\zeta_0) = g_- g_+$ commute with $\zeta_0$. For $\zeta_0$ without roots on $\lambda\in\mathbb{C}\setminus\{0\}$ this is equivalent to both factors taking the form
\begin{align*}
&\unity f(\lambda)+g(\lambda)\zeta&\text{with holomorphic}&&f,g&:\mathbb{C}\setminus\{0\}\to\mathbb{C}.
\end{align*}
They act on the eigenspaces of $\zeta_0$ as the multiplication with the function $f(\lambda)+g(\lambda)\nu$ on $\Sigma^\ast$. Such an element belongs to $G_-$, if and only if the corresponding function $\mu_\omega=f(\lambda)+g(\lambda)\nu$ obeys~\eqref{involutions mu}. It belongs to $G_+$, if and only if the corresponding function $\mu=f(\lambda)+g(\lambda)\nu$ obeys $\sigma^\ast\mu=\mu^{-1}$, extends holomorphically to $\lambda=0$ and equals one at that point.
\end{proof}
The proof shows that the function $\mu_\omega$ in the lemma is unique and equal to the action of $M_\omega$ on the eigenspaces of $\zeta_0$. The logarithmic derivative of this function is a meromorphic differential of second kind with second order poles at $\lambda=0$ and $\lambda=\infty$. Due to~\eqref{involutions mu} it takes the form
\begin{align}\label{def:b}
d\ln\mu_\omega&=\frac{b_\omega(\lambda)}{2\nu}d\ln\lambda&\text{with}&&b_\omega&\in\mathbb{C}^3[\lambda]&\text{such that}&&\bar{\lambda}^3b_\omega(\bar{\lambda}^{-1})&=-\overline{b_\omega(\lambda)}.
\end{align}
The forgoing lemma implies that a branch of $\ln\mu_\omega$ obeys $\ln\mu_\omega=\omega\lambda^{-1}\nu+\mathbf{O}(\nu)$ nearby $\lambda=0$. Since $a(0)=1$ we have $\nu^2=-\lambda+\mathbf{O}(\lambda^2)$ and $d\nu=\frac{-1+\mathbf{O}(\lambda)}{2\nu}d\lambda$ nearby $\lambda=0$ and $b_\omega(0)=\omega$ as well as the fact that the highest coefficient of $b_\omega$ equals $-\bar{\omega}$. Since the integrals along the closed cycles uniquely determine the 1-forms on $\Sigma^\ast$ (which extend holomorphically to $\lambda=0$ and $\lambda=\infty$) the two other coefficients of $b_\omega$ are determined by the condition that the integrals of $d\ln\mu_\omega$ along the closed cycles of $\Sigma^\ast$ are purely imaginary. If all these integrals are multiples of $2\pi i$, then there exists a function $\mu_\omega$, whose logarithmic derivative is given by~\eqref{def:b}. The 1-form $d\ln\mu_\omega$ determines $\mu_\omega$ only up to a multiplicative constant and~\eqref{involutions mu} up to $\pm 1$. But the characterization of $\mu_\omega$ in Lemma~\ref{characterisation 1} determines $\mu_\omega$ uniquely in terms of $b_\omega$. This proves the following corollary:
\begin{corollary}\label{characterisation 2}
For $a\in\mathcal{M}_2^1$ the elements of $\Gamma_a$ are the values $\omega=b_\omega(0)$ of those $b_\omega$ in~\eqref{def:b}, whose 1-forms $d\ln\mu_\omega$ are the logarithmic derivatives of a holomorphic function $\mu_\omega$ on $\Sigma^\ast$~\eqref{eq:spectral curve}.\qed
\end{corollary}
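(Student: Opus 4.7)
The plan is to read the corollary as a direct unpacking of Lemma~\ref{characterisation 1} via the correspondence $\mu_\omega \longleftrightarrow d\ln\mu_\omega$. Starting from the lemma, an element $\omega\in\mathbb{C}$ lies in $\Gamma_a$ iff there is a single-valued holomorphic $\mu_\omega:\Sigma^\ast\to\mathbb{C}\setminus\{0\}$ satisfying the two relations in~\eqref{involutions mu}, such that $\exp(\omega\lambda^{-1}\nu)/\mu_\omega$ extends holomorphically to $\lambda=0$ with value $1$ there. The strategy is to translate existence of such $\mu_\omega$ into a condition purely on its logarithmic differential $d\ln\mu_\omega$, and then show that this differential is exactly a form $b_\omega(\lambda)/(2\nu)\,d\ln\lambda$ as in~\eqref{def:b} with $b_\omega(0)=\omega$.

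First I would examine the pole structure and symmetry of $d\ln\mu_\omega$. Since $\mu_\omega$ is holomorphic and nonzero on $\Sigma^\ast$, its logarithmic derivative is a meromorphic differential of the second kind whose poles can only sit at the two points above $\lambda=0$ and $\lambda=\infty$. The normalisation $\mu_\omega=\exp(\omega\lambda^{-1}\nu)(1+\mathbf{O}(\nu))$ near $\lambda=0$, together with $\nu^2=-\lambda+\mathbf{O}(\lambda^2)$ and $d\nu=\frac{-1+\mathbf{O}(\lambda)}{2\nu}d\lambda$, forces a double pole at $\lambda=0$ with the correct principal part; the symmetry~\eqref{involutions mu} under $\sigma$ and $\rho$ gives $\sigma^\ast d\ln\mu_\omega=-d\ln\mu_\omega$ and the reality condition under $\rho$. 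Combining these, $d\ln\mu_\omega=b_\omega(\lambda)/(2\nu)\,d\ln\lambda$ with $b_\omega\in\mathbb{C}^3[\lambda]$ obeying $\bar\lambda^3\,b_\omega(\bar\lambda^{-1})=-\overline{b_\omega(\lambda)}$. Matching the local expansion at $\lambda=0$ yields $b_\omega(0)=\omega$, and the symmetry then fixes the leading coefficient to be $-\bar\omega$.

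Next I would recover $\mu_\omega$ from $b_\omega$. The two interior coefficients of $b_\omega$ are not yet constrained; however $\mu_\omega$ obeying~\eqref{involutions mu} must have purely imaginary $a$- and $b$-periods (i.e.\ $|\mu_\omega|=1$ on the $\rho$-fixed locus $\{|\lambda|=1\}$), which is a real $2$-dimensional linear condition on $b_\omega$ that determines the interior coefficients uniquely. Single-valuedness of $\mu_\omega=\exp(\int d\ln\mu_\omega)$ on $\Sigma^\ast$ is then exactly the statement that every $a$- and $b$-period of this meromorphic differential lies in $2\pi i\,\mathbb{Z}$, which is the content of "$d\ln\mu_\omega$ is the logarithmic derivative of a holomorphic function on $\Sigma^\ast$". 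Thus $\omega\in\Gamma_a$ if and only if there exists $b_\omega$ of the prescribed form with $b_\omega(0)=\omega$ whose associated differential satisfies this integrality condition.

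The only subtlety will be uniqueness: the 1-form determines $\mu_\omega$ only up to a multiplicative constant and the relations~\eqref{involutions mu} fix this constant only up to a sign. To close the argument I would invoke Lemma~\ref{characterisation 1}: the normalisation requirement that $\exp(\omega\lambda^{-1}\nu)/\mu_\omega$ extends to $\lambda=0$ with value $1$ pins down the sign, giving a genuine bijection between lattice points $\omega\in\Gamma_a$ and admissible polynomials $b_\omega$. This step is the main (though mild) obstacle, since it requires checking that the normalisation in the lemma coincides with the normalisation we are implicitly using when reading off $\omega=b_\omega(0)$ from the local expansion of $\ln\mu_\omega$.
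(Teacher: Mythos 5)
Your proposal is correct and follows essentially the same route as the paper: unpack Lemma~\ref{characterisation 1} through the logarithmic derivative, use the symmetry~\eqref{involutions mu} and the pole structure at $\lambda=0,\infty$ to force the form $b_\omega(\lambda)/(2\nu)\,d\ln\lambda$ with $b_\omega(0)=\omega$ and leading coefficient $-\bar\omega$, determine the interior coefficients by the purely imaginary period condition, identify single-valuedness of $\mu_\omega$ with integrality of the periods, and resolve the residual $\pm1$ ambiguity by the normalisation at $\lambda=0$ from the lemma. This matches the paper's argument point for point, including the final uniqueness subtlety.
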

The arguments of the proof of Theorem~\ref{thm orbits} can be used to extend the characterization in Lemma~\ref{characterisation 1} to a characterization of $\Gamma_\zeta$ for all $\zeta\in\mathcal{P}_2$. Due to Theorem~\ref{thm orbits}, the groups $\Gamma_\zeta$ are no lattices for $a\in\mathcal{M}_2^2\cup\mathcal{M}_2^3\cup\mathcal{M}_2^4\cup\mathcal{M}_2^5$. For $a\in\mathcal{M}_2^2\cup\mathcal{M}_2^3$ we impose in addition to condition~\eqref{involutions mu}
\begin{align}\label{eq:regular function}
\mu_\omega&=f_\omega(\lambda)+g_\omega(\lambda)\nu&\text{with holomorphic}&&
f_\omega,g_\omega&:\mathbb{C}\setminus\{0\}\to\mathbb{C}.
\end{align}
This condition ensures that $\mu_\omega$ is regular on the variety $\Sigma^\ast$~\eqref{eq:spectral curve} which has singularities at the higher order roots of $a$. Together with~\eqref{involutions mu} it implies that $\mu_\omega$ takes the value $\pm 1$ at each root of $a$.\vspace{3mm}

For each $a\in\mathcal{M}_2^1\cup\mathcal{M}_2^2\cup\mathcal{M}_2^3$ let $\Sigma^\circ$ denote the variety~\eqref{eq:spectral curve} without the singular points at the double roots of $a$. For such $a$ we choose two closed cycles of $\Sigma^\circ$ which surround in the $\lambda$-plane exactly two roots $\alpha$ and $\bar{\alpha}^{-1}$ of $\lambda\mapsto\lambda a(\lambda)$ and call them $A$-cycles. The sum of both negative $A$-cycles is homologous to a cycle surrounding the two fixed points of $\sigma$ at $\lambda=0$ and $\lambda=\infty$. If two simple roots of $a$ coalesce at a unitary double root, then the corresponding $A$-cycle converges to the cycle of $\Sigma^\circ$ which surrounds in the $\lambda$-plane the double root. The involution $\rho$ maps these $A$-cycles onto their negative, since they contain two fixed points of $\rho$. 
\begin{lemma}\label{1-forms}
For all $a\in\mathcal{M}_2^1\cup\mathcal{M}_2^2\cup\mathcal{M}_2^3$ and $\omega\in\mathbb{C}$ there exists a unique $b_\omega\in\mathbb{C}^3[\lambda]$ with the following properties:
\begin{enumerate}
\item $b_\omega(0)=\omega$.\label{cond 1}
\item $\bar{\lambda}^3b_\omega(\bar{\lambda}^{-1})=-\overline{b_\omega(\lambda)}$ for all $\lambda\in\mathbb{C}$.\label{cond 2}
\item The integral of $\frac{b_\omega(\lambda)}{2\nu}d\ln\lambda$ along the $A$-cycles vanishes.\label{cond 3}
\end{enumerate}
\end{lemma}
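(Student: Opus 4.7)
Conditions \ref{cond 1} and \ref{cond 2} already pin down all but one complex coefficient of $b_\omega$. Writing $b_\omega(\lambda) = c_0 + c_1\lambda + c_2\lambda^2 + c_3\lambda^3$ and comparing like powers in \ref{cond 2} forces $c_3 = -\bar c_0$ and $c_2 = -\bar c_1$; combined with $c_0 = \omega$ from \ref{cond 1} this gives
\begin{align*}
b_\omega(\lambda) = \omega + c_1\lambda - \bar c_1\lambda^2 - \bar\omega\lambda^3, \qquad c_1 \in \mathbb{C},
\end{align*}
leaving exactly two real degrees of freedom. Set $\eta_\omega := b_\omega(\lambda)(2\nu)^{-1}\,d\ln\lambda$. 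A short calculation using \ref{cond 2} and $\rho:(\lambda,\nu) \mapsto (\bar\lambda^{-1}, -\bar\lambda^{-3}\bar\nu)$ yields $\rho^\ast\eta_\omega = -\overline{\eta_\omega}$; since each $A$-cycle is mapped by $\rho$ to its reverse, $\int_{A_j}\eta_\omega$ equals its own complex conjugate and is therefore real. Hence \ref{cond 3} amounts to two real equations in two real unknowns.

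It thus suffices to show that the $\mathbb{R}$-linear map $L: \mathbb{C} \to \mathbb{R}^2$,
\begin{align*}
L(c_1) := \left(\int_{A_1} \frac{c_1 - \bar c_1\lambda}{2\nu}\,d\lambda,\; \int_{A_2} \frac{c_1 - \bar c_1\lambda}{2\nu}\,d\lambda\right),
\end{align*}
is an isomorphism, because then \ref{cond 3} determines $c_1$ uniquely for every $\omega$. Suppose $L(c_1) = 0$ and set $\theta := (c_1 - \bar c_1\lambda)(2\nu)^{-1}\,d\lambda$. For $a \in \mathcal{M}_2^1$, $\Sigma^\ast$ is the smooth hyperelliptic curve of genus $2$ with holomorphic basis $\{d\lambda/\nu,\,\lambda\,d\lambda/\nu\}$. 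The form $\theta$ lies in their real span and has vanishing $A$-periods, so the classical non-singularity of the $A$-period matrix of a compact Riemann surface gives $\theta \equiv 0$ and $c_1 = 0$.

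For $a \in \mathcal{M}_2^2 \cup \mathcal{M}_2^3$ pass to the normalization $\tilde\Sigma$ of $\Sigma^\ast$: each double root of $a$ yields a node of $\Sigma^\ast$, blown up into two smooth points of $\tilde\Sigma$, and the corresponding $A$-cycle on $\Sigma^\circ$ becomes a small loop around that node. Its period equals $2\pi i$ times the residue of $\theta$ at the node; forcing this residue to vanish extends $\theta$ holomorphically across the node onto $\tilde\Sigma$. When $a \in \mathcal{M}_2^3$, $\tilde\Sigma \cong \mathbb{P}^1$ carries no non-zero holomorphic $1$-form, so $\theta \equiv 0$ immediately. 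When $a \in \mathcal{M}_2^2$, $\tilde\Sigma$ is a smooth elliptic curve, and the remaining genuine $A$-period condition combined with the non-degeneracy of the elliptic period integral forces $\theta \equiv 0$. In both cases $c_1 = 0$, so $L$ is injective and, by dimension count, an isomorphism.

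The main obstacle is the degenerate cases $\mathcal{M}_2^2$ and $\mathcal{M}_2^3$: one must identify the limiting $A$-cycle on $\Sigma^\circ$ with a residue on the normalization and verify that the residue constraints together with the remaining genuine periods form a non-degenerate real linear system. An alternative, cleaner route is a continuity argument: the $1$-form $\eta_\omega$ and the cycles $A_j$ depend continuously on $a$ throughout a neighborhood of the degenerate locus, so non-degeneracy on the open set $\mathcal{M}_2^1$, together with a limiting analysis of vanishing cycles, extends the conclusion uniformly to $\mathcal{M}_2^1 \cup \mathcal{M}_2^2 \cup \mathcal{M}_2^3$.
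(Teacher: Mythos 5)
Your argument is correct and follows essentially the same route as the paper: reduce via conditions \ref{cond 1} and \ref{cond 2} to two real parameters, use the involution $\rho$ to see that the $A$-periods are real, and verify invertibility of the resulting $2\times 2$ real system case by case (non-degeneracy of the genus-two $A$-period matrix for $\mathcal{M}_2^1$, residue at the node plus the elliptic period for $\mathcal{M}_2^2$, two residues for $\mathcal{M}_2^3$). Only the closing ``alternative, cleaner route'' should be dropped: continuity of the period matrix from $\mathcal{M}_2^1$ cannot by itself give invertibility on the degenerate strata, since invertibility is not preserved in limits --- but your direct residue analysis already handles those cases.
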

\begin{proof}
The integrals in the third condition are real, because the second condition implies the equations
\begin{align}\label{eq:involution}
\sigma^\ast\tfrac{b_\omega(\lambda)}{2\nu}d\ln\lambda&=-\tfrac{b_\omega(\lambda)}{2\nu}d\ln\lambda,&
\rho^\ast\tfrac{b_\omega(\lambda)}{2\nu}d\ln\lambda&=-\tfrac{\overline{b_\omega(\lambda)}}{2\nu}d\ln\bar{\lambda}.
\end{align}
Condition~\ref{cond 1} and \ref{cond 2} imply that $b_\omega$ is equal to
\begin{align*}
b_\omega&=\omega-\bar{\omega}\lambda^3+\beta_1(\lambda-\lambda^2)+i\beta_2(\lambda+\lambda^2)&\text{with}&&\beta_1,\beta_2&\in\mathbb{R}.
\end{align*}
We convert the third condition into a real linear inhomogeneous system of two equations with respect to $\beta_1$ and $\beta_2$. Let $\mathbf{A}$ denote the corresponding real two-by-two coefficient matrix. Now the assertion follows if $\mathbf{A}$ is invertible. For $a\in\mathcal{M}_2^1$ the following 1-forms corresponding to $\beta_1$ and $\beta_2$ build a basis of the holomorphic 1-forms of the compact Riemann surface $\bar{\Sigma}$ of genus two which is the two-sheeted covering of $\mathbb{P}^1$ branched at the four simple roots of $a$ as well as at $\lambda=0$ and $\lambda=\infty$:
\begin{align*}
\frac{\lambda-\lambda^2}{\nu}d\ln\lambda&&\frac{i(\lambda+\lambda^2)}{\nu}d\ln\lambda.
\end{align*}
This implies that $\mathbf{A}$ is invertible. For $a\in\mathcal{M}_2^2$ let $\bar{\Sigma}$ denote the compact Riemann surface of genus one which is the two-sheeted covering branched at the 2 simple roots of $a$ and at $\lambda=0$ and $\lambda=\infty$. The 1-forms of those $b_\omega\in\mathbb{C}^2[\lambda]$, which vanish at the double root of $a$ and at $\lambda=0$ extend to holomorphic 1-forms on $\bar{\Sigma}$. The integral of these 1-forms along the $A$-cycle surrounding the two simple roots of $a$ is non-zero. The integral along the other $A$-cycle around the double root of $a$ is a multiple of the value of $b_\omega$ at the double root. This again implies that $\mathbf{A}$ is invertible. Finally, for $a\in\mathcal{M}_2^3$ the integrals along an $A$-cycle surrounding a double root is a multiple of the value of $b_\omega$ at this double root. Therefore, also in this case $\mathbf{A}$ is invertible. This finishes the proof.
\end{proof}
Our choice of $A$-cycles is defined in a neighborhood of each $a\in\mathcal{M}_2^1\cup\mathcal{M}_2^2\cup\mathcal{M}_2^3$. The entries of $\mathbf{A}$ and the integrals of the 1-forms corresponding to $b_\omega=\omega-\bar{\omega}\lambda^3$ along the $A$-cycles depend continuously on $(\omega,a)\in\mathbb{C}\times\mathcal{M}_2^1\cup\mathcal{M}_2^2\cup\mathcal{M}_2^3$. Therefore, the same is true for $b_\omega$.\vspace{3mm}

Now we supplement the two $A$-cycles by two $B$-cycles. For $a\in\mathcal{M}_2^1$ let $\alpha_i$ with $i=1, 2$ denote the unique simple roots of $a$ with $|\alpha_i|<1$, which is surrounded by the $i$-th $A$-cycle. The first $B$-cycle is defined as a cycle that surrounds in the $\lambda$-plane inside the unit disc $\lambda=0$ and passes through $\alpha_1$ without surrounding the other simple roots inside the unit disc. This cycle is homologous to a cycle that surrounds $\lambda=0$ and $\lambda=\alpha_1$ inside the unit disc. Our definition is also well defined in the limiting cases $a \to \mathcal{M}_2^2 \cup \mathcal{M}_2^3 \cup \mathcal{M}_2^4 \cup \mathcal{M}_2^5$. The other $B$-cycle passes through $\bar{\alpha}_2^{-1}$ outside the unit disc and surrounds all other roots of $a$. Again, this cycle is homologous to a cycle that surrounds $\alpha_1, \alpha_2, \bar{\alpha}_1^{-1}$. Together with the $A$-cycles they build a canonical basis of cycles of the compact Riemann surface $\bar{\Sigma}$ of genus two. The involution $\rho$ reverses the orientation and the intersection numbers. Therefore $\rho$ preserves the $B$-cycles up to an addition of $A$-cycles and the integrals of the 1-forms~\eqref{def:b} in Lemma~\ref{1-forms} along the $B$-cycles are purely imaginary. Due to the first equation in~\eqref{eq:involution} these 1-forms have no residues at $\lambda=0$ and $\lambda=\infty$ and all their integrals are purely imaginary. For $a \in \mathcal{M}_2^2\cup\mathcal{M}_2^3$ let $\bar{\Sigma}$ denote the smooth compact Riemann surface of genus $0$ or $1$ which is the two-sheeted covering branched only at the simple roots of $a$ and at $\lambda=0$ and $\lambda=\infty$. In the limit $a\to\mathcal{M}_2^2\cup\mathcal{M}_2^3$ a $B$-cycle passing through a root of $a$ which coalesce at a unitary double root converges to a non-closed path of $\bar{\Sigma}$. This path surrounds $\lambda=0$ and connects the two points in $\bar{\Sigma}$ lying above the double root of $a$. The conclusion that the integrals of the 1-forms~\eqref{def:b} in Lemma~\ref{1-forms} along the $B$-cycles and all other cycles of $\Sigma^\circ$ are purely imaginary is also true for $a\in\mathcal{M}_2^2\cup\mathcal{M}_2^3$. This implies that for all $\omega\in\mathbb{C}$ there exists a harmonic function $h_\omega:\Sigma^\circ\to\mathbb{R}$, such that $dh$ is the real part of the 1-form~\eqref{def:b}. The derivative determines $h_\omega$ up to an additive constant, which is fixed by the condition $\sigma^\ast h_\omega=-h_\omega$.
\begin{lemma}\label{continuous extension}
The map $(\omega,a)\mapsto b_\omega$ has a continuous extension to $(\omega,a)\in\mathbb{C}\times\mathcal{M}_2$. 
\end{lemma}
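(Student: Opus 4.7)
The continuity of $(\omega,a)\mapsto b_\omega$ on $\mathbb{C}\times(\mathcal{M}_2^1\cup\mathcal{M}_2^2\cup\mathcal{M}_2^3)$ has already been noted. Writing $b_\omega = \omega - \bar\omega\lambda^3 + \beta_1(\lambda-\lambda^2) + i\beta_2(\lambda+\lambda^2)$, conditions~\ref{cond 1}--\ref{cond 3} form a $2\times 2$ real linear system for $(\beta_1,\beta_2)$ whose coefficient matrix $\mathbf{A}$ is invertible on these strata and whose entries depend continuously on $a$. The plan is to define $b_\omega$ on the two remaining strata $\mathcal{M}_2^4$ and $\mathcal{M}_2^5$ by the natural limits of condition~\ref{cond 3}, and then to verify that the resulting linear systems fit continuously to those on the regular strata.

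For $a\in\mathcal{M}_2^5$ with double roots $\lambda_1,\bar\lambda_1^{-1}\notin\mathbb{S}^1$, the normalization of $\Sigma^\ast$ is rational (coordinate $\sqrt{-\lambda}$), and $\tfrac{b_\omega(\lambda)}{2\nu}d\ln\lambda$ extends holomorphically through the normalization if and only if $b_\omega(\lambda_1)=0$; by condition~\ref{cond 2} this single complex condition automatically gives $b_\omega(\bar\lambda_1^{-1})=0$. I take this as the replacement for condition~\ref{cond 3}, and a short computation shows that for every $\lambda_1\in\mathbb{C}\setminus(\mathbb{S}^1\cup\{0\})$ the $\mathbb{R}$-linear map $(\beta_1,\beta_2)\mapsto b_\omega(\lambda_1)\in\mathbb{C}$ is a bijection. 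For $a\in\mathcal{M}_2^4$ with quadruple root $\lambda_0\in\mathbb{S}^1$, the analogous replacement requires $b_\omega$ to vanish to second order at $\lambda_0$; condition~\ref{cond 2} cuts the four real equations $b_\omega(\lambda_0)=0,\ b_\omega'(\lambda_0)=0$ down to two, and uniqueness is again immediate.

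Continuity across the boundary is then verified by a blow-up argument at each pair of coalescing roots. If $a^{(n)}\in\mathcal{M}_2^1$ converges to $a\in\mathcal{M}_2^5$ with simple roots $\alpha_n,\beta_n\to\lambda_1$, the substitution $\lambda = \lambda_1 + \tfrac{\beta_n-\alpha_n}{2}w$ transforms the $A$-cycle integral surrounding $\alpha_n,\beta_n$ into an integral over a fixed cycle around $w=\pm 1$, and a direct period calculation shows this integral tends to a non-zero multiple of $b_\omega(\lambda_1)$. Hence the vanishing condition~\ref{cond 3} for $a^{(n)}$ degenerates precisely to $b_\omega(\lambda_1)=0$ at the limit, the matrices $\mathbf{A}^{(n)}$ converge to the invertible limit matrix defined in the second paragraph, and Cramer's rule yields $b_\omega^{(n)}\to b_\omega$. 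The analogous blow-up (with two $A$-cycles pinching simultaneously) handles sequences into $\mathcal{M}_2^4$.

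The main obstacle is the $\mathcal{M}_2^4$ case, where two $A$-cycles must be collapsed together at the quadruple root and one must extract \emph{both} a value- and a derivative-condition at $\lambda_0$. Here I would diagonalize by choosing a symmetric/antisymmetric basis of the two collapsing $A$-cycles so that one combination has a finite, non-zero limit proportional to $b_\omega(\lambda_0)$ while the other, after rescaling by the vanishing discriminant factor, produces $b_\omega'(\lambda_0)$. The delicate point is the uniform control of this rescaling; once it is carried out, the limit coefficient matrix is shown to be invertible by the computation of paragraph two, and continuity of $b_\omega$ follows as before.
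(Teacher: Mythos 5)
Your overall strategy --- write down the limiting linear system on $\mathcal{M}_2^4\cup\mathcal{M}_2^5$ (vanishing of $b_\omega$ at the off-circle double roots, second-order vanishing at the fourth-order root), check its unique solvability, and then show that the systems on the regular strata converge to it --- is a legitimate alternative to the paper's argument, and the identification of the limit conditions together with the invertibility computations in your second paragraph are correct (for $\lambda_1\notin\mathbb{S}^1$ the vectors $\lambda_1-\lambda_1^2$ and $i(\lambda_1+\lambda_1^2)$ are indeed $\mathbb{R}$-independent, and at $\lambda_0=\pm1$ the two surviving real equations do determine $(\beta_1,\beta_2)$). The gap is in the convergence step. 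With the $A$-cycles as chosen in the paper, each one encloses a pair $\{\alpha,\bar{\alpha}^{-1}\}$ with $\alpha$ inside and $\bar{\alpha}^{-1}$ outside the unit disc; in a limit $a_n\to a\in\mathcal{M}_2^5$ the roots that coalesce are $\alpha_1,\alpha_2\to\lambda_1$ and $\bar{\alpha}_1^{-1},\bar{\alpha}_2^{-1}\to\bar{\lambda}_1^{-1}$, which are \emph{not} the pairs enclosed by the $A$-cycles. The cycle enclosing a coalescing pair is the vanishing cycle; it has intersection number $\pm1$ with each $A$-cycle (the paper uses exactly this fact in the proof of Theorem~\ref{continuous T}), so it does not lie in the isotropic sublattice spanned by the $A$-cycles, and replacing the $A$-cycles by it would change the very definition of $b_\omega$. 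Keeping the paper's cycles, each $A$-cycle must separate $\alpha_1$ from $\alpha_2$, hence gets pinched at both nodes, and its period diverges logarithmically with coefficient proportional to the residues $b_\omega(\lambda_1)$ and $b_\omega(\bar{\lambda}_1^{-1})$ rather than converging to them. Your matrices $\mathbf{A}^{(n)}$ therefore do not converge and Cramer's rule cannot be applied as stated; one must renormalize by the logarithm of the vanishing discriminant and control the bounded remainder. This is precisely the ``delicate rescaling'' you defer in the $\mathcal{M}_2^4$ case, and --- contrary to your third paragraph --- it is already unavoidable for $\mathcal{M}_2^5$. As written, the convergence $b_\omega^{(n)}\to b_\omega$ is asserted but not proved.

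For comparison, the paper avoids the degeneration of cycles altogether: it normalizes $b_\omega/\|b_\omega\|$, passes to the harmonic functions $h_\omega$ with $dh_\omega=\Re\bigl(\tfrac{b_\omega}{2\nu}d\ln\lambda\bigr)$, and uses the maximum principle together with removability of singularities to show that every subsequential limit of $b_\omega/\|b_\omega\|$ vanishes at the double roots (to second order at a fourth-order root). Since a nonzero polynomial of the form $\beta_1(\lambda-\lambda^2)+i\beta_2(\lambda+\lambda^2)$ has its unique root in $\mathbb{C}\setminus\{0\}$ on $\mathbb{S}^1$, this forces boundedness of $(\beta_1,\beta_2)$ and then uniqueness of the limit --- the same linear algebra as your paragraph two, but reached without any asymptotics of periods. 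If you wish to keep your explicit approach, the missing piece is the asymptotic expansion of the $A$-periods in the pinching limit; otherwise the soft compactness argument is the shorter road.
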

\begin{proof}
For any sequence $(a_n)_{n\in\mathbb{N}}$ in $\mathcal{M}_2^1\cup\mathcal{M}_2^2\cup\mathcal{M}_2^3$ with limit in $\mathcal{M}_2^4\cup\mathcal{M}_2^5$ the corresponding renormalized polynomials $b_\omega/\|b_\omega\|$ have convergent subsequences. Here, $\|\cdot\|$ is any norm defined on the vector space $\mathbb{C}^3[\lambda]$. The corresponding functions $h_\omega/\|b_\omega\|$ also converge uniformly on compact subsets of $\Sigma^\circ$. By the strong maximum principle for harmonic functions, $h_\omega$ is bounded on the complement of two small disjoint discs around the two poles of $dh_\omega$ at $\lambda=0$ and $\lambda=\infty$, since the singular parts of $dh$ are determined by $\omega$. Due to the theorem on removable singularities of harmonic functions, the limit extends to a harmonic function on the two-sheeted covering of $\mathbb{C}\setminus\{0\}$, which is branched only at the simple roots of $a$. This implies that the limit of $b_\omega/\|b_\omega\|$ vanishes at all double roots of $a$ and has a second order root at any fourth order root of $a$. In particular, the coefficients $\beta_1$ and $\beta_2$ of $b_\omega$ are bounded for bounded $\omega$. Therefore, $\|b_\omega\|$ is bounded and all limits of $b_\omega$ coincide. This implies the claim.
\end{proof}
Now we are ready to prove the main theorem of this section.
\begin{theorem}\label{continuous T}
For $a\in\mathcal{M}_2^1\cup\mathcal{M}_2^2\cup\mathcal{M}_2^3$ the values $b_\omega(0)=\omega$ of all $b_\omega$ in~\eqref{def:b} with the following property build a lattice $\tilde{\Gamma}_a$ in $\mathbb{C}$: They define $d\ln\mu_\omega$~\eqref{def:b} of a function $\mu_\omega$ on $\Sigma^\ast$~\eqref{eq:spectral curve} which obeys~\eqref{involutions mu} and~\eqref{eq:regular function}. The map $T:\mathcal{M}_2^1\cup\mathcal{M}_2^2\cup\mathcal{M}_2^3\to\mathcal{F}$ to the corresponding isomorphism class is continuous. For all compact subsets $\mathcal{K}\subset\mathcal{F}$ each $a\in\mathcal{M}_2^4\cup\mathcal{M}_2^5$ has a neighborhood $O$ in $\mathcal{M}_2$, such that $T^{-1}[\mathcal{K}]\cap O=\emptyset$.
\end{theorem}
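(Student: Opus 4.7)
The plan is to treat the three assertions separately, building on the apparatus already set up in Lemmas~\ref{characterisation 1}, \ref{1-forms} and~\ref{continuous extension}.

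First I would show that $\tilde{\Gamma}_a$ is a lattice. Lemma~\ref{1-forms} already produces, for every $\omega\in\mathbb{C}$ and $a\in\mathcal{M}_2^1\cup\mathcal{M}_2^2\cup\mathcal{M}_2^3$, a unique polynomial $b_\omega$ obeying the $A$-period vanishing condition, with $b_\omega$ depending $\mathbb{R}$-linearly on $\omega$. Thus $\omega\mapsto b_\omega$ identifies $\mathbb{C}$ with the space of admissible differentials. The condition that $\frac{b_\omega(\lambda)}{2\nu}d\ln\lambda$ is the logarithmic derivative of a single-valued function $\mu_\omega$ on $\Sigma^\ast$ obeying~\eqref{involutions mu} and~\eqref{eq:regular function} translates into the requirement that the periods along a chosen basis of $B$-cycles lie in $2\pi i\mathbb{Z}$ (together with the supplementary condition that $\mu_\omega=\pm 1$ at the singular points of $\Sigma^\ast$, which in the degenerate cases $\mathcal{M}_2^2\cup\mathcal{M}_2^3$ replaces the $B$-periods of vanishing cycles). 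For $a\in\mathcal{M}_2^1$ Corollary~\ref{characterisation 2} identifies $\tilde{\Gamma}_a$ with the genuine isotropy lattice $\Gamma_a$, which is a lattice by Theorem~\ref{thm orbits}(1). For $a\in\mathcal{M}_2^2\cup\mathcal{M}_2^3$ I would argue directly: the $\mathbb{R}$-linear $B$-period map $\omega\mapsto(\int_{B_1}d\ln\mu_\omega,\int_{B_2}d\ln\mu_\omega)\in(i\mathbb{R})^2$ is injective because the paired 1-forms constitute a basis of the appropriate space of normalized differentials of the second kind (as shown in the proof of Lemma~\ref{1-forms}), so its preimage of the discrete group $(2\pi i\mathbb{Z})^2$ is a discrete subgroup of full rank, i.e.\ a lattice.

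For continuity of $T$, the central input is Lemma~\ref{continuous extension}, which furnishes a continuous map $(\omega,a)\mapsto b_\omega$. The $B$-cycles can be chosen to deform continuously with $a$ inside $\mathcal{M}_2^1\cup\mathcal{M}_2^2\cup\mathcal{M}_2^3$, so the $\mathbb{R}$-linear period map
\begin{align*}
P_a:\mathbb{C}\to(i\mathbb{R})^2,\qquad \omega\mapsto\Bigl(\int_{B_1}\tfrac{b_\omega(\lambda)}{2\nu}d\ln\lambda,\,\int_{B_2}\tfrac{b_\omega(\lambda)}{2\nu}d\ln\lambda\Bigr),
\end{align*}
depends continuously on $a$ and is invertible at each such $a$ by the previous step. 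Hence $P_a^{-1}$ depends continuously on $a$, so a $\mathbb{Z}$-basis of $\tilde\Gamma_a=P_a^{-1}((2\pi i\mathbb{Z})^2)$ can be chosen locally continuously. Passing to the ratio $\tau_a$ and reducing to the fundamental domain~\eqref{fundamental} is continuous in the quotient topology of $\mathcal{F}$, which yields continuity of $T$.

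For the last assertion I would analyse the degeneration of periods near $\mathcal{M}_2^4\cup\mathcal{M}_2^5$. If $a_n\to a^\ast\in\mathcal{M}_2^4\cup\mathcal{M}_2^5$, then either a pair of simple roots of $a_n$ coalesce at a fourth-order root on $\mathbb{S}^1$ (case $\mathcal{M}_2^4$) or two simple roots coalesce into a double root off $\mathbb{S}^1$ (case $\mathcal{M}_2^5$), in each case pinching an $A$-cycle to a node. The standard local analysis on a two-sheeted cover shows that the dual $B$-period of a normalized holomorphic differential with trivial $A$-periods diverges logarithmically while the non-vanishing $A$-period remains bounded, so the lattice generators $\omega_n^{(1)},\omega_n^{(2)}$ of $\tilde\Gamma_{a_n}$ satisfy $|\omega_n^{(2)}/\omega_n^{(1)}|\to\infty$ along one direction. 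Reducing to the fundamental domain $\mathcal{F}$ this forces $\Im(\tau_{a_n})\to\infty$. Since compact subsets of $\mathcal{F}$ are bounded in $\Im\tau$, the preimage of any such $\mathcal{K}$ cannot accumulate at $a^\ast$, proving the neighbourhood statement. The main obstacle will be the third step: making the logarithmic divergence of the $B$-period quantitative uniformly in $a$ as the node forms, and in the $\mathcal{M}_2^4$ case carefully tracking the simultaneous collision of two pairs of roots, for which I would rely on the normal forms from Lemma~\ref{continuous extension} together with the removable-singularity argument used there.
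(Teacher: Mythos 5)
Your treatment of the first two assertions is essentially the paper's: quantize the $B$-periods in $2\pi i\mathbb{Z}$, use injectivity of the $\mathbb{R}$-linear period map $P_a$ to obtain a lattice, and deduce continuity of $T$ from Lemma~\ref{continuous extension} together with the continuous dependence of the $B$-cycles on $a$; the shortcut $\tilde\Gamma_a=\Gamma_a$ via Corollary~\ref{characterisation 2} for $a\in\mathcal{M}_2^1$ is legitimate and is also used in the paper. One gap: for $a\in\mathcal{M}_2^2\cup\mathcal{M}_2^3$ your justification of injectivity (``the paired 1-forms constitute a basis of normalized differentials of the second kind'') is not an argument. What is needed, and what the paper supplies, is that a $b_\omega$ whose differential has \emph{all} periods zero would integrate to a meromorphic function on $\bar{\Sigma}$, odd under $\sigma$, with only two simple poles over $\lambda=0,\infty$ and with roots at all roots of $a$ --- impossible by degree count, whence $\omega=0$. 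This is fillable but missing.

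The third assertion is where the proposal genuinely fails. You claim that an $A$-cycle pinches and that ``the dual $B$-period of a normalized holomorphic differential with trivial $A$-periods diverges logarithmically.'' Neither holds in this paper's normalization. The $A$-cycles surround the $\rho$-conjugate pairs $\{\alpha_i,\bar{\alpha}_i^{-1}\}$, whereas for $a\to\mathcal{M}_2^5$ the coalescing roots are $\alpha_1\to\alpha_2$ (and $\bar{\alpha}_1^{-1}\to\bar{\alpha}_2^{-1}$), so the vanishing cycle has intersection number $\pm1$ with \emph{both} $A$-cycles and is, modulo $A$-cycles, the \emph{difference} of the two $B$-cycles; and for $a\in\mathcal{M}_2^4$ all four roots coalesce, not a single pair. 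Moreover, since the $b_\omega$-differentials are normalized by \emph{vanishing} $A$-periods (not $\int_{A_j}=\delta_{ij}$), their $B$-periods do not diverge: by Lemma~\ref{continuous extension} the $b_\omega$ converge for bounded $\omega$ and their limits vanish at the double roots, so the integral over the vanishing cycle --- i.e.\ the difference of the two $B$-periods --- tends to $0$ for every bounded $\omega$. The correct mechanism is thus that $P_a$ degenerates onto the diagonal of $(i\mathbb{R})^2$: the generator of $\tilde\Gamma_a$ with equal $B$-periods $(2\pi i,2\pi i)$ stays bounded and converges, while any $\omega$ with distinct $B$-periods must be unbounded, since otherwise its vanishing-cycle period would tend to $0$ instead of remaining a fixed nonzero element of $2\pi i\mathbb{Z}$. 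Your ``standard logarithmic local analysis'' would quantify the wrong quantity on the wrong side of the period map, and the $\mathcal{M}_2^5$ case additionally requires the regularity condition~\eqref{eq:regular function} via the functions $f_\omega,g_\omega$, which your sketch does not use. As written, this step does not go through.
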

The last statement ensures that the map $T$ extends to a continuous map from $\mathcal{M}_2$ to the one-point compactification $\mathcal{F}\cup\{\infty\}$ of $\mathcal{F}$, such that $T$ takes the value $\infty$ on $\mathcal{M}_2^4\cup\mathcal{M}_2^5$.
\begin{proof}
For $a\in\mathcal{M}_2^1\cup\mathcal{M}_2^2\cup\mathcal{M}_2^3$ the condition~\eqref{eq:regular function} is equivalent to the condition that $\mu_\omega$ takes only one value at the double roots of $a$. For $\mu_\omega$ obeying~\eqref{involutions mu}, this condition is equivalent to the condition that $\mu_\omega$ takes values $\pm1$ at the double roots of $a$. Therefore, an $\omega\in\mathbb{C}$ has the property in the Theorem~\ref{continuous T} if and only if all integrals of the 1-forms~\eqref{def:b} in Lemma~\ref{1-forms} along the $B$-cycles belong to $2\pi i\mathbb{Z}$. Now we claim that if these integrals along the $B$-cycles vanish, then $\omega$ vanishes. In fact, if $\omega\ne0$ and these integrals vanish, then the integrals along all cycles vanish and the 1-form~\eqref{def:b} is the exterior derivative of a meromorphic function $h$. For $a\in\mathcal{M}_2^1$ the function $h-\sigma^\ast h$ has exactly two simple poles at $\lambda=0,\infty$ and roots at all four roots of $a$, which is impossible. For $a\in\mathcal{M}_2^2\cup\mathcal{M}_2^3$ the polynomial $b_\omega$ vanishes at the double roots of $a$ and $h$ is holomorphic on the two-sheeted covering of $\mathbb{C}\setminus\{0\}$, which is branched only at the simple roots of $a$. Again $h-\sigma^\ast h$ has simple poles at $\lambda=0,\infty$ and vanishes at simple roots of $a$ and at both points which cover a double root of $a$, which is impossible. This proves the claim.\vspace{3mm}

In particular, for $a\in\mathcal{M}_2^1\cup\mathcal{M}_2^2\cup\mathcal{M}_2^3$ the map from $\omega\in\mathbb{C}$ to the integrals of the 1-forms~\eqref{def:b} in Lemma~\ref{1-forms} along the $B$-cycles is a $\mathbb{R}$-linear isomorphism to $i\mathbb{R}^2$. We call these integrals $B$-periods. This shows that the property of Theorem~\ref{continuous T} characterizes the elements $\omega$ of a lattice. Again, the $B$-cycles extend to small neighborhoods in $\mathcal{M}_2^1\cup\mathcal{M}_2^2\cup\mathcal{M}_2^3$ and the $B$-periods depend continuously on $(\omega,a)\in\mathbb{C}\times\mathcal{M}_2^1\cup\mathcal{M}_2^2\cup\mathcal{M}_2^3$. Therefore $T:\mathcal{M}_2^1\cup\mathcal{M}_2^2\cup\mathcal{M}_2^3\to\mathcal{F}$ is well defined and continuous.\vspace{3mm}

It remains to prove the last statement. We first consider limits $a\to\mathcal{M}_2^4$. A small neighborhood in $\mathcal{M}_2$ of each $a\in\mathcal{M}_2^4$ is disjoint from $\mathcal{M}_2^2\cup\mathcal{M}_2^3$. For $a\in\mathcal{M}_2^1$ the lattices $\tilde{\Gamma}_a=\Gamma_a$ contain those $\omega\in\mathbb{C}$, whose $B$-periods belong to $2\pi i\mathbb{Z}$. We claim that the $\omega\in\tilde{\Gamma}_a$ with equal $B$-periods are bounded in the limit $a\to\mathcal{M}_2^4$, and the $\omega\in\tilde{\Gamma}_a$ with different $B$-periods are unbounded in the limit $a\to\mathcal{M}_2^4$.\vspace{3mm}

Let us first assume on the contrary that there exists a convergent sequence $a_n\to a\in\mathcal{M}_2^4$ in $\mathcal{M}_2^1$ and a bounded sequence $\omega_n\in\tilde{\Gamma}_{a_n}$ with constant different $B$-periods. Due to Lemma~\ref{continuous extension} the sequence $b_{\omega_n}$ of a convergent subsequence $\omega_n$ converges. Moreover the corresponding meromorphic 1-forms~\eqref{def:b} in Lemma~\ref{1-forms} converge to a meromorphic 1-form on the two-sheeted covering of $\mathbb{P}^1$, which is branched only at $\lambda=0$ and $\lambda=\infty$. The limits have two simple poles at $\lambda=0$ and $\lambda=\infty$ and no other poles. Along cycles surrounding two simple roots of $a_n$, which coalesce at $a\in\mathcal{M}_2^4$, the integrals of the 1-forms~\eqref{def:b} defined by $b_{\omega_n}$ converge to zero. For $a_n\in\mathcal{M}_2^1$ these cycles have opposite intersection numbers $\pm 1$ with both $A$-cycles, and these integrals are the difference of both $B$-periods. This contradicts the assumption of constant different $B$-periods. Therefore, the $\omega\in\Gamma_a$ with different $B$-periods are unbounded.\vspace{3mm}

Now we show that all $\omega\in\Gamma_a$ with equal $B$-periods extend continuously to $a\to\mathcal{M}_2^4$. If the $B$-periods coincide, then the corresponding 1-form~\eqref{def:b} in Lemma~\ref{1-forms} has vanishing integrals along all cycles of the two-sheeted covering over a disc in $\lambda\in\mathbb{C}\setminus\{0\}$ containing all four roots of $a$. In particular, this 1-form is the derivative $dp_\omega$ of a holomorphic function $p_\omega$ on this two-sheeted covering. Let $a_n\to\mathcal{M}_2^4$ be a convergent sequence in $\mathcal{M}_2^1$ and $\omega_n\in\tilde{\Gamma}_{a_n}$ a sequence with constant equal $B$-periods. The sequence of functions $p_{\omega_n}/|\omega_n|$ converges locally on the two-sheeted covering nearby the four roots of $a$. Consequently $b_{\omega_n}/|\omega_n|$ and the integrals of the corresponding 1-forms~\eqref{def:b} along the $B$-periods converge to the corresponding integrals of the limit of $b_{\omega_n}/|\omega_n|$. An explicit calculation yields that the limit of $b_{\omega_n}$ in terms of the constant equals $B$-periods. Hence, $\omega_n$ are bounded and the $\omega\in\tilde{\Gamma}_a$ extends continuously to the limits $a\to\mathcal{M}_2^4$. This proves the claim and for $a\to\mathcal{M}_2^4$ the last statement of the theorem.\vspace{3mm}

These arguments extends to limits $a\to\mathcal{M}_2^5$. For bounded sequences $b_{\omega_n}\in\tilde{\Gamma}_{a_n}$ with constant different $B$-periods and $a_n\to a\in\mathcal{M}_2^5$ a subsequence of the functions~\eqref{eq:regular function} $f_{\omega_n}$ and $g_{\omega_n}$ converge by the maximum modulus theorem on all compact subsets of $\lambda\in\mathbb{C}\setminus\{0\}$. In particular, the corresponding $B$-periods converge to the integrals of the limiting 1-forms along the limits of the $B$-cycles. By definition of the $B$-cycles and due to the transformation properties of $b_{\omega}$ the limits of the periods are again the same in contradiction to constant different $B$-periods.\vspace{3mm}

For sequences $\omega_n\in\tilde{\Gamma}_{a_n}$ with constant equal $B$-periods, the functions $p_{\omega_n}$ take the same value at all four roots of $a_n$. Without loss of generalization we may choose this value as zero. Then $p_{\omega_n}$ can be represented as $h_{\omega_n}(\lambda)\nu$ with a holomorphic function $h_{\omega_n}$ on the disc in $\lambda\in\mathbb{C}\setminus\{0\}$ around the four roots of $a_n$. Again the function $h_{\omega_n}/|\omega_n|$ converges. In particular, the limit of $p_{\omega_n}/|\omega_n|$ has a second order root at the fourth order root of the limit $a\in\mathcal{M}_2^5$. Consequently, the limit of $b_{\omega_n}/|\omega_n|$ has a third order root there and the limit of $b_{\omega_n}$ is again determined by the constant equal $B$-periods. Therefore, also in this case the $\omega\in\tilde{\Gamma}_a$ with equal $B$-periods extend continuously to the limits $a\to\mathcal{M}_2^5$.
\end{proof}

\section{Explicit calculation of spectral data in $\mathcal{M}_2^2$ and $\mathcal{M}_2^3$.} \label{Sec:g1}

In this section we shall express for $a\in\mathcal{M}_2^2\cup\mathcal{M}_2^3$ the period lattices $\tilde{\Gamma}_a$ and the corresponding 1-forms~\eqref{def:b} in Lemma~\ref{1-forms} in terms of elliptic functions. We start with $a\in\mathcal{M}_2^2$ and afterwards consider $a\in\mathcal{M}_2^3$ as limiting cases.\vspace{3mm}

We parameterize the elements $a\in\mathcal{M}_2^2$ by $r\in(0,1)$ and $\varphi\in[0,\pi)$ such that
$$a(\lambda)=(\lambda-re^{-2i\varphi})(\lambda-r^{-1}e^{-2i\varphi})(\lambda-e^{2i\varphi})^2.
$$
The two-sheeted covering $\bar{\Sigma}$ is branched at the two simple roots of $a$ and an elliptic curve at $\lambda=0$ and $\lambda=\infty$. We use elliptic Weierstra{\ss} functions~\cite[Chapter~13.12]{Bat} and present this elliptic curve as $z\in\mathbb{C}/(2\omega\mathbb{Z}+2\omega'\mathbb{Z})$. Since $\bar{\Sigma}$ is endowed with two involutions~\eqref{1-forms} whose composition has no fixed points, we may choose the first half period $\omega$ to be real and the second half period to be imaginary. The involutions act as
\begin{align}\label{eq:involutions z}
\sigma&:z\mapsto -z,&\rho&:z\mapsto\bar{z}+\omega'.
\end{align}
We identify $z=0$ with $\lambda=\infty$ and $z=\omega'$ with $\lambda=0$. For simplicity, we determine $\hat{\lambda}$ and the eigenvalue $\hat{\nu}$ of $\hat{\zeta}$ instead of $(\lambda,\nu)$ in terms of Weierstra{\ss} elliptic functions. They have the following properties:
\begin{align}
\hat{\nu}^2+\hat{\lambda}\hat{a}(\hat{\lambda})&=0&\mbox{with}\qquad
\hat{a}(\hat{\lambda})&=(\hat{\lambda}+r)(\hat{\lambda}+r^{-1})\in\mathcal{M}_1,\label{hat elliptic curve}\\
\sigma:(\hat{\lambda},\hat{\nu})&\mapsto(\hat{\lambda},-\hat{\nu}),&
\rho:(\hat{\lambda},\hat{\nu})&\mapsto(\bar{\hat{\lambda}}^{-1},-\bar{\hat{\lambda}}^{-2}\bar{\hat{\nu}}).\nonumber
\end{align}
These properties determine the values of $\wp$ at the roots of $\wp'$ and $\hat{\lambda}$ and $\hat{\nu}$:
\begin{align}\label{r and e1 e2 e3}
e_3=-\tfrac{r+r^{-1}}{3}<e_2=\tfrac{2r-r^{-1}}{3}&<e_1=\tfrac{2r^{-1}-r}{3},&
\hat{\lambda}(z)&
=e_3-\wp(z),&
\hat{\nu}&
=\frac{\wp'(z)}{2}.
\end{align}
The inequalities are given in~\cite[Chapter~13.15]{Bat}. The Weierstra{\ss} invariants are equal to
\begin{align*}
g_2(r)&:=\tfrac{4}{3}(r+r^{-1})^2-4,&
g_3(r)&:=\tfrac{8}{27}(r+r^{-1})^3-\tfrac{4}{3}(r+r^{-1}).
\end{align*}
The corresponding $a$ has an additional double root in the fixed point set $\hat{\lambda}\in\mathbb{S}^1$ of $\rho$. This fixed point set decomposes into the two components $z\in\pm\frac{\omega'}{2}+\mathbb{R}/2\omega\mathbb{Z}$ denoted by III and IV in Figure~\ref{fig:Imtau_eq_omegahalbe}.
\begin{figure}[h]
	\centering
	\captionsetup{width=0.68\textwidth}
	\fbox{
		\begin{minipage}{11 cm}
			\centering
			\begin{tikzpicture}[scale=1.0]			
			\draw[thick] (0,-1) rectangle (4,1);
			\draw[thick,color=red] (0,0) -- (4,0)node[right]{$I$};
			\draw[thick,color=blue] (0,1) -- (4,1)node[right]{$II$};
			\draw[thick,color=blue] (0,-1) -- (4,-1)node[right]{$II$};
			\draw[thick,color=green] (0,0.5) -- (4,0.5)node[right]{$III$};
			\draw[thick,color=green] (0,-0.5) -- (4,-0.5)node[right]{$IV$};
			\draw[thick] (2,-1)node[below]{$\omega-\omega'$}--(2,1)node[above]{$\omega+\omega'$};
			\path (0,1)node[left]{$\omega'$};	
			\path (0,0)node[left]{$0$};	
			\path (0,-1)node[left]{$-\omega'$};
			\path (2,0)node[above left]{$\omega$};
			\filldraw (0,0) circle (1.5pt);
			\filldraw (0,1) circle (1.5pt);
			\filldraw (0,-1) circle (1.5pt);
			\filldraw (2,0) circle (1.5pt);
			\filldraw (2,1) circle (1.5pt);
			\filldraw (2,-1) circle (1.5pt);
			\end{tikzpicture}
			\caption{Fixed point sets $I,II$ of the involution $z\mapsto\bar{z}$ and $III,IV$ of the involution $\rho$ on the torus $\mathbb{C}/(2\omega\mathbb{Z}+2\omega'\mathbb{Z})$.}
			\label{fig:Imtau_eq_omegahalbe}
		\end{minipage}
	}
\end{figure}
Instead of $\varphi$ we shall use $t\in\mathbb{R}/2\omega\mathbb{Z}$ such that $z_+=\frac{\omega'}{2}+t$ corresponds to the double root of $a$ with $\hat{\lambda}(z_+)=-e^{4i\varphi}$. The parameter $\lambda$ and $a\in\mathcal{M}_2$ are equal to
\begin{align*}
\lambda&=-e^{-2i\varphi}\hat{\lambda},&
a(\lambda)&=(\lambda+e^{-2i\varphi}\hat{\lambda}_1)(\lambda+e^{-2i\varphi}\hat{\lambda}_1^{-1})(\lambda-e^{2i\varphi})^2.
\end{align*}
In particular, we have
\begin{align*}
d\ln\lambda&=\frac{\wp'(z)dz}{\wp(z)-e_3},&dz=\frac{-\hat{\lambda}}{2\hat{\nu}}d\ln\hat{\lambda}.
\end{align*}
The subgroup $\Gamma_a\subset\mathbb{C}$~\eqref{lattice a} contains a one-dimensional subspace. This subspace contains a one-dimensional lattice of periods in $\tilde{\Gamma}_a$. Let $\mu_1$ denote the eigenvalue of a monodromy along a generator of this lattice. The logarithm is a single-valued meromorphic function on $\mathbb{C}/(2\omega\mathbb{Z}+2\omega'\mathbb{Z})$ (compare~\cite[Chapter~13.13~(18)]{Bat}):
\begin{equation}\label{lnmu1}
\ln\mu_1=\pi i\frac{\zeta(z)-\zeta(z-\omega')-\eta'}{\zeta(z_+)-\zeta(z_+-\omega')-\eta'}=\pi i\frac{\wp'(z)}{2(e_3-\wp(z))}\frac{2(e_3-\wp(z_+))}{\wp'(z_+)}=\pi i\frac{\hat{\nu}}{\hat{\lambda}}\frac{\hat{\lambda}_+}{\hat{\nu}_+},
\end{equation}
where index $+$ denotes the value at $z_+$. This meromorphic function has first order poles at $z\in 2\omega\mathbb{Z}+\omega'\mathbb{Z}$ with $\ln\mu_{1,+}=\pi i$. The involutions act on $\ln\mu_1$ as (compare \eqref{involutions mu}):
\begin{align*}
\sigma^\ast\ln\mu_1&=-\ln\mu_1,&\rho^\ast\ln\mu_1&=-\overline{\ln\mu}_1.
\end{align*}
These properties uniquely determine $\ln\mu_1$ with roots at $z\in\omega+2\omega\mathbb{Z}+\omega'\mathbb{Z}$.

Let $\mu_2$ denote the eigenvalue of another monodromy, whose logarithmic derivative $d\ln\mu_2$ has integral $2\pi i$ along $2\omega$. The meromorphic function \begin{equation}\label{lnmu2}
\ln\mu_2=\omega'(\zeta(z)\!+\!\zeta(z\!-\!\omega')\!+\!\eta')\!-\!2\eta'z\!-\!\frac{\omega'(\zeta(z_+)\!+\!\zeta(z_+\!-\!\omega')\!+\!\eta')\!-\!2\eta'z_+}{\pi i}\ln\mu_1
\end{equation}
is multivalued on $\bar{\Sigma}$ but single valued on $z\in\mathbb{C}$ with first order poles at $z\in 2\omega\mathbb{Z}+\omega'\mathbb{Z}$. It takes the value $\pi i$ at $z=\omega$ and is uniquely determined by the following properties:
\begin{align*}
\ln\mu_2(z+2\omega)&=\ln\mu_2(z)+2\pi i,&
\ln\mu_2(z+2\omega')&=\ln\mu_2(z),&
\ln\mu_2(z_+)&=0,\\
\ln\mu_2(-z)&=-\ln\mu_2(z),&
\ln\mu_2(\bar{z}+\omega')&=-\overline{\ln\mu_2(z)}.
\end{align*}
The corresponding periods $\omega_1,\omega_2$ generate $\tilde{\Gamma}_a$ and are proportional to the Laurent coefficients of $\ln\mu_1$ and $\ln\mu_2$ in front of $\frac{1}{z-\omega'}$ at $z=\omega'$. Therefore $\tilde{\Gamma}_a$ is isomorphic to
$$\frac{-\pi i}{\zeta(z_+)-\zeta(z_+-\omega')-\eta'}\mathbb{Z}+\left(\omega'+\frac{\omega'(\zeta(z_+)+\zeta(z_+-\omega')+\eta')-2\eta'z_+}{\zeta(z_+)-\zeta(z_+-\omega')-\eta'}\right)\mathbb{Z}.$$
This lattice is isomorphic to the lattice generated by $1$ and 
$$
\tilde{\tau}_a=\frac{2\eta'z_+-2\omega'\zeta(z_+)}{\pi i}.$$
Now we consider the limit $r\to1$ which corresponds to $a\to\mathcal{M}_2^3$. In this case we have
\begin{align*}
e_3&=-\tfrac{2}{3},&e_2&=\tfrac{1}{3},&e_1&=\tfrac{1}{3},&g_2&=\tfrac{4}{3},&g_3&=-\tfrac{8}{27}.
\end{align*}
Due to~\cite[Chapter~13.15]{Bat} this corresponds to
\begin{gather}\nonumber\begin{aligned}
\omega&=\infty,&\omega'&=\frac{\pi i}{2},&\wp(z)&=\frac{1}{3}+\frac{1}{\sinh^2(z)},&\zeta(z)&=-\frac{z}{3}+\coth(z),\\
\eta'&=-\frac{\pi i}{6},&z_+&=\frac{\pi i}{4}+t,&\hat{\lambda}(z)&=-\frac{1}{\sinh^2(z)},&\hat{\nu}&=i\frac{\cosh(z)}{\sinh^3(z)},
\end{aligned}\\\label{genus0}\begin{aligned}
\ln\mu_1&=\pi i\frac{\cosh(2t)}{\sinh(2z)},&\ln\mu_2&=\pi i\frac{\cosh(2z)-\sinh(2t)}{\sinh(2z)},&\tilde{\tau}_a&=\frac{i-\tanh(t)}{1-i\tanh(t)}.
\end{aligned}\end{gather}
	\begin{figure}[h]
		\centering
		\captionsetup{width=0.53\textwidth}
		\fbox{
		\begin{minipage}{8.6 cm}
				\centering
		\begin{tikzpicture}[scale=1.2]
		\clip (-1.6,-0.1) rectangle (1.9,2.9);
		\fill[fill=gray!30]
		(-0.5,2.4) -- (120:1cm) arc (120:60:1cm) --(0.5,2.4)-- cycle;
		\draw[->,thin] (-1.5,0) -- (1.5,0) node[right] {$\Re$};
		\draw[->,thin] (0,-0.1) -- (0,2.5) node[above] {$\Im$};
		\draw[dashed,thick] (0,0) circle (1cm);
		\draw[dashed,thick] (0.5,-0.1)--(0.5,2.4);
		\draw[dashed,thick] (-0.5,-0.1)--(-0.5,2.4);
		\draw[very thick,color=orange](1,0) arc (0:60:1cm);
		\draw[->,very thick,color=orange](60:1cm) arc (60:30:1cm);
		\draw[very thick,color=green](60:1cm) arc (60:120:1cm);
		\draw[very thick,color=blue](120:1cm) arc (120:180:1cm);
		\draw[very thick,color=blue](60:1cm) -- (0.5,2.4);
		\draw[->,very thick, color=blue](60:1cm) -- (0.5,1.6);
		\draw[very thick,color=orange](120:1cm) -- (-0.5,2.4);
		\draw[->,very thick, color=orange](120:1cm) -- (-0.5,1.6);
		\draw[->,very thick, color=blue](120:1cm) arc (120:150:1cm);
		\path (0,1.7) node[fill=gray!30]{$\mathcal{F}$};
		\end{tikzpicture}
		\caption{$\tilde{\tau}_a$ for $a\in\mathcal{M}_2^3$ and the corresponding elements of $\mathcal{F}$ are sketched in the same color.}
		\label{fig:doubleroot_tau_mapping}
	\end{minipage}
	}
	\end{figure}
\begin{theorem}\label{thm immersion}
$T$ restricted to $\mathcal{M}_2^2$ is a surjective immersion onto $\mathcal{F}\setminus\{\frac{1\pm i\sqrt{3}}{2}\}$.
\end{theorem}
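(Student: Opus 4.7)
Since $\mathcal{M}_2^2$ is two-dimensional (parametrized by $r\in(0,1)$ and $t\in\mathbb{R}/2\omega(r)\mathbb{Z}$, equivalently by the half-period $r$ and the shift $t$ determining the double root via $z_+=\omega'/2+t$), and $\mathcal{F}$ is also two-real-dimensional, ``immersion'' is the same as ``local diffeomorphism.'' It therefore suffices to establish two facts: (i) the differential $d_{(r,t)}T$ has rank $2$ everywhere, and (ii) the image of $T|_{\mathcal{M}_2^2}$ is exactly $\mathcal{F}\setminus\{e^{\pm i\pi/3}\}$. The plan is to use the explicit formula for $\tilde{\tau}_a$ derived above to prove (i) by direct differentiation, and then to combine (i) with the boundary analysis of $T$ to obtain (ii).

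\textbf{Local diffeomorphism.} Differentiating
$\tilde{\tau}_a = \frac{2\eta' z_+-2\omega'\zeta(z_+)}{\pi i}$ with respect to $t$ while keeping the lattice fixed, one obtains
\[
\frac{\partial\tilde{\tau}_a}{\partial t}=\frac{2\eta'+2\omega'\wp(z_+)}{\pi i},
\]
using $\zeta'=-\wp$. The derivative with respect to $r$ is more delicate because $\omega,\omega',\eta,\eta'$ and the Weierstra\ss{} functions all depend on $r$ through $g_2(r),g_3(r)$; I would compute it with the chain rule and reduce it using Legendre's relation $\eta\omega'-\eta'\omega=\frac{\pi i}{2}$ and the explicit formulas for $g_2(r),g_3(r)$ given in the text. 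The remaining task is to check that $\partial_r\tilde{\tau}_a$ and $\partial_t\tilde{\tau}_a$ are $\mathbb{R}$-linearly independent. My preferred route is to express both partial derivatives in terms of the Laurent coefficients of $\ln\mu_1$ and $\ln\mu_2$ at $z=\omega'$ and to exploit that these two coefficients generate $\tilde{\Gamma}_a$; the nondegeneracy of $d_{(r,t)}T$ then reduces to the nondegeneracy of the lattice. I expect this algebraic reduction to be the main technical obstacle.

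\textbf{Boundary behaviour.} By Theorem~\ref{continuous T} the map $T$ extends continuously to $\bar T:\mathcal{M}_2\to\mathcal{F}\cup\{\infty\}$ with $\bar T(\mathcal{M}_2^4\cup\mathcal{M}_2^5)=\{\infty\}$. The degeneration $r\to 0$ lies in $\mathcal{M}_2^4\cup\mathcal{M}_2^5$ and is therefore sent to $\infty$. The degeneration $r\to 1$ gives the elements of $\mathcal{M}_2^3$, and in that limit the formulas~\eqref{genus0} yield the explicit parametrisation
\[
\tilde{\tau}_a=\frac{i-\tanh t}{1-i\tanh t},\qquad t\in\mathbb{R},
\]
whose image is the open unit semicircle $\{|\tau|=1,\ \Im\tau>0\}$. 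Under the identifications defining $\mathcal{F}$, this is precisely the arc boundary of $\mathcal{F}$ minus the two corner points $e^{\pm i\pi/3}=\frac{1\pm i\sqrt{3}}{2}$, which are attained only in the limits $t\to\pm\infty$. Hence $\bar T(\partial\mathcal{M}_2^2)\supseteq(\partial\mathcal{F}\setminus\{e^{\pm i\pi/3}\})\cup\{\infty\}$.

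\textbf{Conclusion.} Since $T|_{\mathcal{M}_2^2}$ is a local diffeomorphism, its image is open in $\mathcal{F}$. Since $\bar T$ is continuous on the (locally compact) extended domain and the boundary of the image lies in $\partial\mathcal{F}\cup\{\infty\}\subset(\mathcal{F}\cup\{\infty\})\setminus\mathcal{F}^\circ$, the map $T|_{\mathcal{M}_2^2}$ is also proper onto its image in $\mathcal{F}\setminus\{e^{\pm i\pi/3}\}$ and is hence a covering of this open target. Because $\mathcal{F}\setminus\{e^{\pm i\pi/3}\}$ is connected and the closure of the image already contains the entire boundary arc of $\mathcal{F}$ minus the two corners together with $\infty$, the image must be all of $\mathcal{F}\setminus\{e^{\pm i\pi/3}\}$. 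The two excluded points are genuinely missed: any sequence with $T(a_n)\to e^{\pm i\pi/3}$ forces $r(a_n)\to 1$ and $t(a_n)\to\pm\infty$, so the limit lies in $\mathcal{M}_2^3$ and not in $\mathcal{M}_2^2$. The main obstacle, as noted, is the nondegeneracy of the Jacobian; secondarily one must verify that the parametrisation of the unit semicircle above indeed misses precisely $e^{\pm i\pi/3}$ after the $\mathcal{F}$-identifications.
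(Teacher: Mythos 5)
Your outline coincides with the paper's: both reduce the immersion claim to the nondegeneracy of the Jacobian of $(r,t)\mapsto\tilde{\tau}_a$ and both read off surjectivity from the boundary behaviour at $r\to1$ (the unit semicircle) and $r\to0$ (escape to $\infty$). The problem is that you have not actually carried out the one step that constitutes the theorem. You compute only $\partial_t\tilde{\tau}_a=\frac{2\eta'+2\omega'\wp(z_+)}{\pi i}$ and then defer both the $r$-derivative and the $\mathbb{R}$-linear independence check, labelling them ``the main technical obstacle.'' Legendre's relation alone will not close this. The paper's device is to note that the partial derivatives of $\Re(\tilde{\tau}_a)$ and $\Im(\tilde{\tau}_a)$ with respect to $\hat{\lambda}_+$ and $r$ are meromorphic on the elliptic curve~\eqref{hat elliptic curve}, anti-symmetric under $\sigma$, hence determined by their poles at $\hat{\lambda}_+\in\{0,\infty,-r,-r^{-1}\}$; since $\tilde{\tau}_a$ takes the $r$-independent values~\eqref{values tau} at $\hat{\lambda}_+=-r,-r^{-1}$, suitable combinations of the derivatives are regular there, which pins all four of them down explicitly and gives the Jacobian determinant $\frac{4((\omega'e_3+\eta')^2-\omega'^2)}{\pi^2(1-r^2)}$. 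Even then one must still prove $\frac{\omega'e_3+\eta'}{\omega'}<-1$, which the paper extracts from the vanishing of $\int d\Re(\tilde{\tau}_a)$ over half of the fixed-point circle of $\rho$ and a count of the roots of $d\Re(\tilde{\tau}_a)$. None of this is present or recoverable from your sketch, so the immersion claim is unestablished.

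Your concluding topological argument is also broken: $T|_{\mathcal{M}_2^2}$ is \emph{not} proper onto $\mathcal{F}\setminus\{\frac{1\pm i\sqrt{3}}{2}\}$, because that target contains the boundary arcs of the fundamental domain, and a sequence $a_n\in\mathcal{M}_2^2$ degenerating to $\mathcal{M}_2^3$ has $T(a_n)$ converging to precisely such an arc point; the preimage of a compact neighbourhood of that point is then non-compact in $\mathcal{M}_2^2$. Hence ``proper local diffeomorphism, therefore covering, therefore surjective'' does not apply (consistently, the paper's remark asserts a $3$-sheeted covering only over $\{\tau\in\mathcal{F}\mid|\tau|>1,\ \Re(\tau)\in(-\tfrac12,\tfrac12)\}$). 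Surjectivity has to be argued as in the paper, from the explicit geometry of the level curves $t\mapsto\tilde{\tau}_a(r,t)$ for fixed $r$: they join $\Re(\tilde{\tau}_a)=-1$ to $\Re(\tilde{\tau}_a)=1$ with the extrema of $\Im(\tilde{\tau}_a)$ at $\hat{\lambda}_+=\pm1$, converge to the unit semicircle as $r\to1$, and leave every compact set as $r\to0$, so they sweep out the whole region above the arc; only after this does one pass to the identifications defining $\mathcal{F}$ and check that the two corner points $\frac{1\pm i\sqrt{3}}{2}$ are the sole omissions. Your last observation, that approaching these corners forces $r\to1$ and $t\to\pm\infty$, is correct but is the easy half of the statement.
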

\begin{remark}
The restriction of $T$ to $\mathcal{M}_2^2\cap T^{-1}[\{\tau \in \mathcal{F} \mid |\tau|>1 \text{ and } \Re(\tau) \in (-\tfrac{1}{2}, \tfrac{1}{2})\}]$ is a 3-sheeted unbranched covering. Moreover, $T$ restricted to $\mathcal{M}_2^3$ is a surjective map to $\{\tau \in \mathcal{F} \mid |\tau|=1 \text{ or } \Re(\tau)=\pm\tfrac{1}{2}\}$ that is an immersion on the open, dense subset $\mathcal{M}_2^3 \cap T^{-1}[\mathcal{F}\setminus\{\frac{1\pm i\sqrt{3}}{2}, i\}]$. In Figure~\ref{fig:doubleroot_tau_mapping} we show $\tilde{\tau}_a$~\eqref{genus0} for $a\in\mathcal{M}_2^3$ and the corresponding elements in the boundary of $\mathcal{F}$.
\end{remark}
\begin{proof}
First we show that the restriction of $T$ to $\mathcal{M}_2^2$ is an immersion. For $z_+\in\frac{\omega'}{2}+\mathbb{R}$ we use $\bar{z}_+=z_+-\omega'$ and obtain
\begin{align*}
\Re(\tilde{\tau}_a)&\!=\!\frac{2\eta'z_+\!-\!\omega'(\zeta(z_+)\!+\!\zeta(z_+\!-\!\omega')\!+\!\eta')}{\pi i},&\Im(\tilde{\tau}_a)&\!=\!\frac{\omega'(\eta'\!-\!\zeta(z_+)\!+\!\zeta(z_+\!-\!\omega')}{-\pi}\!=\!\frac{\omega'\hat{\nu}_+}{\pi\hat{\lambda}_+}.
\end{align*}
The second expression is a meromorphic function on the elliptic curve~\eqref{hat elliptic curve}, and the first obeys
\begin{align*}
\Re(\tilde{\tau}_a)(z_++2\omega)&=\Re(\tilde{\tau}_a)(z_+)-2,&
\Re(\tilde{\tau}_a)(z_++2\omega')&=\Re(\tilde{\tau}_a)(z_+).
\end{align*}
We consider $\big(\Re(\tilde{\tau}_a),\Im(\tilde{\tau}_a)\big)$ and their complex continuations as functions depending on $(\hat{\lambda}_+,r)\in\mathbb{S}^1\times(0,1)\subset\mathbb{C}\times\mathbb{R}$. Consequently, their derivatives have poles at $\hat{\lambda}_+\in\{\infty,0\}$ and at the roots of $d\hat{\lambda}_+$ at $\hat{\lambda}_+\in\{-r,-r^{-1}\}$. Since the additive constant in the first equation does not depend on $(\hat{\lambda}_+,r)$, all these derivatives are meromorphic functions on the elliptic curve~\eqref{hat elliptic curve}. Inserting \eqref{r and e1 e2 e3} in~\cite[Chapter~13.13 (13)]{Bat} gives $\wp(z_+-\omega')-e_3=\frac{(e_3-e_1)(e_3-e_2)}{\wp(z_+)-e_3}=\frac{(-r^{-1})(-r)}{-\hat{\lambda}_+}=-\hat{\lambda}_+^{-1}$. Now~\cite[Chapter~13.12]{Bat} yields
\begin{align*}
\frac{\partial\Re(\tilde{\tau}_a)}{\partial\hat{\lambda}_+}&
=\frac{\omega'(\hat{\lambda}_++\hat{\lambda}_+^{-1})-2(\omega'e_3+\eta')}{2\pi i\hat{\nu}_+},&\frac{\partial\Im(\tilde{\tau}_a)}{\partial\hat{\lambda}_+}&=\frac{\omega'(\hat{\lambda}_+^{-1}-\hat{\lambda}_+)}{2\pi\hat{\nu}_+}.
\end{align*}
At $\hat{\lambda}_+\in\{-r,-r^{-1}\}$ both functions take values which do not depend on $r$:
\begin{align}\label{values tau}
\Re(\tilde{\tau}_a)|_{\hat{\lambda}_+=-r}&\!=\!-1,&\Re(\tilde{\tau}_a)|_{\hat{\lambda}_+=-r^{-1}}&\!=\!-1,&\Im(\tilde{\tau}_a)|_{\hat{\lambda}_+=-r}&\!=\!0,&\Im(\tilde{\tau}_a)|_{\hat{\lambda}_+=-r^{-1}}&\!=\!0.
\end{align}
Since the corresponding total derivatives with respect to $r$ vanishes, we conclude that
\begin{align*}
\frac{\partial\Re(\tilde{\tau}_a)}{\partial r}&-\frac{\partial\Re(\tilde{\tau}_a)}{\partial \hat{\lambda}_+}&\text{and}&&
\frac{\partial\Im(\tilde{\tau}_a)}{\partial r}&-\frac{\partial\Im(\tilde{\tau}_a)}{\partial \hat{\lambda}_+}&\text{have no pole at }&\hat{\lambda}_+=-r,\\
\frac{\partial\Re(\tilde{\tau}_a)}{\partial r}&+r^{-2}\frac{\partial\Re(\tilde{\tau}_a)}{\partial \hat{\lambda}_+}&\text{and}&&
\frac{\partial\Im(\tilde{\tau}_a)}{\partial r}&+r^{-2}\frac{\partial\Im(\tilde{\tau}_a)}{\partial \hat{\lambda}_+}&\text{have no pole at }&\hat{\lambda}_+=-r^{-1}.
\end{align*}
Since $\big(\Re(\tilde{\tau}_a),\Im(\tilde{\tau}_a)\big)$ have at $\hat{\lambda}_+=0,\infty$ the same simple poles as $\hat{\lambda}_+^{\pm\frac{1}{2}}$, we obtain that
\begin{align*}
\frac{\partial\Re(\tilde{\tau}_a)}{\partial r}&+\frac{2\hat{\lambda}_+}{\omega'}\frac{\partial\omega'}{\partial r}\frac{\partial\Re(\tilde{\tau}_a)}{\partial \hat{\lambda}_+}&\text{and}&&
\frac{\partial\Im(\tilde{\tau}_a)}{\partial r}&+\frac{2\hat{\lambda}_+}{\omega'}\frac{\partial\omega'}{\partial r}\frac{\partial\Im(\tilde{\tau}_a)}{\partial\hat{\lambda}_+}&\text{have no pole at }&\hat{\lambda}_+=0,\\
\frac{\partial\Re(\tilde{\tau}_a)}{\partial r}&-\frac{2\hat{\lambda}_+}{\omega'}\frac{\partial\omega'}{\partial r}\frac{\partial\Re(\tilde{\tau}_a)}{\partial \hat{\lambda}_+}&\text{and}&&
\frac{\partial\Im(\tilde{\tau}_a)}{\partial r}&-\frac{2\hat{\lambda}_+}{\omega'}\frac{\partial\omega'}{\partial r}\frac{\partial\Im(\tilde{\tau}_a)}{\partial\hat{\lambda}_+}&\text{have no pole at }&\hat{\lambda}_+=\infty.
\end{align*}
Both derivatives with respect to $r$ and $\hat{\lambda}_+$ are anti-symmetric with respect to $\sigma$ and uniquely determined by their poles. We insert $r+r^{-1}=-3e_3$ (see \eqref{r and e1 e2 e3}) and calculate
\begin{align*}
\frac{\partial\Re(\tilde{\tau}_a)}{\partial r}&\!=\!\frac{2\frac{\partial\omega'}{\partial r}(\hat{\lambda}_+^2\!-\!1)}{2\pi i\hat{\nu}_+},&\frac{\partial\Im(\tilde{\tau}_a)}{\partial r}&\!=\!\frac{\omega'(r^{-2}\!-\!1)\hat{\lambda}_+\!-\!2\frac{\partial\omega'}{\partial r}\hat{a}(\hat{\lambda}_+)}{2\pi\hat{\nu}_+},&\frac{\partial\omega'}{\partial r}&\!=\!\frac{2\eta'\!-\!\omega'e_3}{2(1-r^2)}.
\end{align*}
By~\cite[Chapter~13.13~(13)]{Bat} the last equation follows also from $\omega'=-\int^0_{\omega'}dz_+
=\int_0^{\infty}\frac{d\hat{\lambda}_+}{2\hat{\nu}_+}$:
\begin{multline*}
\frac{\partial\omega'}{\partial r}=\int_0^{\infty}\frac{\hat{\lambda}_+(r^{-2}-1)d\hat{\lambda}_+}{4\hat{\nu}_+(\hat{\lambda}_++r)(\hat{\lambda}_++r^{-1})}=\int_0^\infty\left(\frac{r^{-2}}{\hat{\lambda}_++r^{-1}}-\frac{1}{\hat{\lambda}_++r}\right)\frac{d\hat{\lambda}_+}{4\hat{\nu}_+}=\\=\int_0^{\omega'}\left(\frac{\wp(z_++\omega+\omega')-e_2}{r(r-r^{-1})}-\frac{\wp(z_+-\omega)-e_1}{r(r^{-1}-r)}\right)\frac{dz_+}{2}=\\=\frac{1}{2(1-r^2)}\left.\left(\zeta(z_+-\omega)+\zeta(z_++\omega+\omega')-e_3z_+\right)\right|_{z_+=0}^{z_+=\omega'}=\frac{2\eta'-\omega'e_3}{2(1-r^2)}.
\end{multline*}
Moreover, the first integral does not vanish for $r\in(0,1)$. In order to calculate the real Jacobian of the restriction of $T$ to $\mathcal{M}_2^2$ we again use the real parameter $\varphi$ instead of $\hat{\lambda}_+=-e^{4i\varphi}$. Then the determinant of the Jacobian can be calculated explicitly as
$$\frac{\partial\Re(\tilde{\tau}_a)}{\partial\varphi}\frac{\partial\Im(\tilde{\tau}_a)}{\partial r}-\frac{\partial\Im(\tilde{\tau}_a)}{\partial\varphi}\frac{\partial\Re(\tilde{\tau}_a)}{\partial r}=
\frac{4((\omega'e_3+\eta')^2-\omega'^2)}{\pi^2(1-r^2)}.$$
The complex continuation of $\Re(\tilde{\tau}_a)$ is purely imaginary on $\hat{\lambda}_+\in\mathbb{R}$ and takes the same values~\eqref{values tau} at $\hat{\lambda}_+\in\{-r,-r^{-1}\}$. Using the transformation with respect to $\rho$ we conclude
$$0=\hspace{-3mm}\int\limits_{\hat{\lambda}_+=-r^{-1}}^{\hat{\lambda}_+=-r}\hspace{-3mm}\frac{\omega'(\hat{\lambda}_++\hat{\lambda}_+^{-1})-2(\omega'e_3+\eta')}{2\pi i\hat{\nu}_+}d\hat{\lambda}_+=2\hspace{-3mm}\int\limits_{\hat{\lambda}_+=-1}^{\hat{\lambda}_+=-r}\hspace{-3mm}\frac{\omega'(\hat{\lambda}_++\hat{\lambda}_+^{-1})-2(\omega'e_3+\eta')}{2\pi i\hat{\nu}_+}d\hat{\lambda}_+.$$
Therefore, $d\Re(\tilde{\tau}_a)$ has a single root in each interval $\hat{\lambda}_+\in(-r^{-1},-1)$ and $(-1,-r^{-1})$. This implies $\frac{\omega'e_3+\eta'}{\omega'}<-1$ and the fact that the former Jacobian has negative determinant as well as the restriction of $T$ to $\mathcal{M}_2^2$ is an immersion.\vspace{3mm}

For $\hat{\lambda}_+=\pm1$ the real part of $\tilde{\tau}_a$ is zero and $\pm 1$, respectively. For fixed $r\in(0,1)$ these values correspond to the maximum and the minimum of $\Im(\tilde{\tau}_a)$. For $r=1,$ we have calculated $\tilde{\tau}_a$ (see Figure~\ref{fig:doubleroot_tau_mapping}). Now the Theorem follows from the behavior of $\zeta$ in the limit $r\to 0$ as described in~\cite[Chapter~13.15~(9)]{Bat}.
\end{proof}
\begin{figure}[h]
		\captionsetup{width=0.7\textwidth}
		\centering
		\fbox{
			\begin{minipage}{6.5cm}
				\centering
				\includegraphics[scale=0.5]{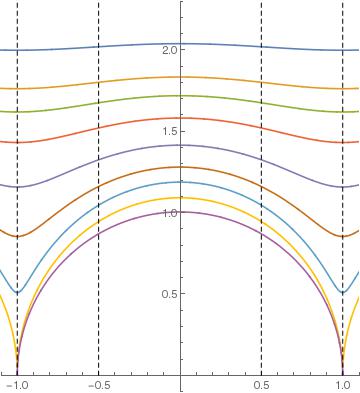}
				\caption{$\tilde{\tau}_a$ for increasing $r$.} 
				\label{fig:real_imag}
			\end{minipage}
		}
\end{figure}
In Figure \ref{fig:real_imag} we plot $\tilde{\tau}_a$ against $t\in(-\omega,\omega)$ for several values $r\in(0,1)$. For increasing $r$ the curves are continuously moving upwards and the described arc will be successively compressed until it resembles a horizontal line.\vspace{3mm}

Finally, we remark that the calculations in the proof of Theorem~\ref{thm immersion}  give explicit formulas for the coefficients of the polynomials $\hat{b}_1$ and $\hat{b}_2$, such that
\begin{align*}
d\ln\mu_1&=\frac{\hat{b}_1(\hat{\lambda})}{2\hat{\nu}}d\ln\hat{\lambda},&
d\ln\mu_2&=\frac{\hat{b}_2(\hat{\lambda})}{2\hat{\nu}}d\ln\hat{\lambda}.
\end{align*}

\section{The Willmore energy of conformal maps $\imm_a:\mathbb{C}/\hat{\Gamma}_a\to\mathbb{H}$}\label{sec willmore}
In this section we use quaternionic function theory~\cite{PP} and define for each $a\in\mathcal{M}_2^1\cup\mathcal{M}_2^2\cup\mathcal{M}_2^3$ a conformal map $\imm_a:\mathbb{C}/\hat{\Gamma}_a\to\mathbb{H}$ for some sublattice $\hat{\Gamma}_a\subset\tilde{\Gamma}_a$. More precisely, we define a quaternionic holomorphic line bundle on $\mathbb{C}/\hat{\Gamma}_a$ with two holomorphic sections $s_1$ and $s_2$. They define the map $\imm_a$ such that $s_2 = s_1 \imm_a$. This map is conformal.\vspace{3mm}

For any $p=(\lambda,\nu)\in\Sigma^\ast$~\eqref{eq:spectral curve} each $\zeta_0\in I(a)$ has at $\lambda$ a nontrivial eigenspace with eigenvalue $\nu$. This eigenspace is away from the roots of $a$ one-dimensional. At simple roots of $a$ $\nu$ vanishes, $\zeta_0$ is nilpotent and the eigenspace is also one-dimensional. We define an eigenfunction $\psi$ of $\zeta$ for $p=(\lambda,\nu)\in\Sigma^\ast$ in terms of the fundamental solution~\eqref{ODE_frame}:
\begin{align}\psi(z)&=F|_{\lambda}^{-1}(z)\chi&\mbox{with}&&z\in\mathbb{C}&\mbox{ and }\chi\in\mathbb{C}^2\setminus\{(0,0)\}&\mbox{such that}&&\zeta_0|_{\lambda}\chi&=\nu\chi.\label{baker akhiezer}\end{align}
Let $\qj=(\begin{smallmatrix}0&1\\-1&0\end{smallmatrix})$ represent the quaternion $\qj$ as a $2\times2$-matrix. The whole set $\mathbb{H}$ is represented by the $2\times2$-matrices $A$ obeying $\qj A=\bar{A}\qj$ or equivalently by the matrices of the form $(\chi,-j\bar{\chi})$ with $\chi\in\mathbb{C}^2$. The involution $\eta=\sigma\circ\rho$ acts on $\zeta_0$ and $F$ as
\begin{align*}
\eta^\ast\zeta_0&=-\qj\bar{\zeta}_0\qj,&
\eta^\ast F(z)&=-\qj\bar{F}(z)\qj.
\end{align*}
Hence $-\qj\chi$ is an eigenvector of $\zeta_0|_{\eta(\lambda)}$ with eigenvalue $\eta(\nu)$, if $\chi$ is an eigenvector of $\zeta_0|_{\lambda}$ with eigenvalues $\nu$, and $F|_{\eta(\lambda)}^{-1}(z)(-\qj\bar{\chi})=-\qj\bar{\psi}(z)$. For two points $p_1$, $p_2$ in $\Sigma^\ast$ and non-trivial eigenvectors $\chi_1,\chi_2\in\mathbb{C}^2\setminus\{(0,0)\}$ of $\zeta_0$ at these two points, the vectors
\begin{align*}
\chi_3&=-\qj\bar{\chi}_1&\mbox{and}&&\chi_4&=-\qj\bar{\chi}_2
\end{align*}
are eigenvectors of $\zeta_0$ at the points $p_3=\eta(p_1)$ and $p_4=\eta(p_2)$. The corresponding functions $\psi_1,\ldots,\psi_4$~\eqref{baker akhiezer} define the following maps $s_1$, $s_2$, $\imm_a:\mathbb{C}\to\mathbb{H}$:
\begin{align}\label{eq:imm}
s_1&=(\psi_1,\psi_3)=(\psi_1,-\qj\bar{\psi}_1),&
s_2&=(\psi_2,\psi_4)=(\psi_2,-\qj\bar{\psi}_2),&
\imm_a&=s_1^{-1}s_2.
\end{align}
We remark that $s_1$ has no roots, since $F$~\eqref{ODE_frame} is for all $(\lambda,z)\in\mathbb{C}^\times\times\mathbb{C}$ an invertible $2\times 2$ matrix. We want to consider $s_1$ and $s_2$ as sections of a quaternionic line bundle. Since the monodromies along two generators $\hat{\omega}_1$ and $\hat{\omega}_2$ of $\hat{\Gamma}_a$ commute with $\zeta_0$, the translations by $\hat{\omega}_j$ with $j=1,2$ act on $s_1$ and $s_2$ as left multiplication by the eigenvalues $\hat{\mu}_j$ of $M_{\hat{\omega}_j}$
\begin{align*}
\left(\begin{smallmatrix}\hat{\mu}_j(p_1)&0\\0&\hat{\mu}_j(p_3)\end{smallmatrix}\right)&=
\left(\begin{smallmatrix}\hat{\mu}_j(p_1)&0\\0&\bar{\hat{\mu}}_j(p_1)\end{smallmatrix}\right),&
\left(\begin{smallmatrix}\hat{\mu}_j(p_2)&0\\0&\hat{\mu}_j(p_4)\end{smallmatrix}\right)&=
\left(\begin{smallmatrix}\hat{\mu}_j(p_2)&0\\0&\bar{\hat{\mu}}_j(p_2)\end{smallmatrix}\right)
\end{align*}
respectively. They commute with the right action of $\mathbb{H}$ on the trivial bundle $\mathbb{C}\times\mathbb{H}$, if they are all real. In this case the representation $\hat{\Gamma}_a\to\mathbb{C}^\times$ with $\hat{\omega}_j\mapsto\hat{\mu}_j(p_i)$, induces for each $i=1,2$ on $\mathbb{C}/\hat{\Gamma}_a$, a quaternionic line bundle with section $(\psi_i,-\qj\psi_i)$. Furthermore, both quaternionic line bundles coincide, if the following closing condition~(C) holds. In this case a direct calculation confirms that the function $\imm_a$ is periodic with respect to $\hat{\Gamma}_a$.
\begin{enumerate}
\item[(C)] There exist $k_1, k_2 \in \mathbb{R}\setminus\{0\}$ such that $\hat{\mu}_j(p_i) = k_j$ for $j=1,2$ and $i=1,2$.
\end{enumerate}
Due to Theorem~\ref{thm orbits} the fundamental solutions $F,\tilde{F}$ of two $\zeta_0,\tilde{\zeta}_0\in I(a)$ differ by a translation with respect to $z$ and a left multiplication by an invertible $2\times 2$-matrix. The corresponding sections $s_1,\tilde{s}_1$ and $s_2,\tilde{s}_2$~\eqref{eq:imm}
differ by translation and right multiplication by quaternions, respectively. Finally, $\imm_a$ and $\tilde{\imm}_a$ differ by translation with respect to $z\in\mathbb{C}$ and by left and right multiplication with quaternions. The left and right multiplication by the quaternions are global conformal maps of $\mathbb{H}$. This shows that $\imm_a$ and $\tilde{\imm}_a$ induce up to translations with respect to $z$ and global conformal maps of $\mathbb{H}$ the same maps to $\mathbb{H}$.\vspace{3mm}

We remark that cmc tori in $\mathbb{S}^3$ are special cases of our construction. If $p_1$ is a fixed point of $\rho$, then the map $s_1^{-1}\qi s_1$ to the unit imaginary quaternions (which are isomorphic to the 2-sphere in the imaginary quaternions $\simeq\mathbb{R}^3$) is harmonic. If $p_2$ is a fixed point of $\rho$ then $s_2^{-1}\qi s_2$ is harmonic. If both conditions hold, then the Cartesian product $s_1^{-1}\qi s_1\times s_2^{-1}\qi s_2$ is harmonic and $\imm_a$ is a cmc torus in unit quaternions $\simeq\mathbb{S}^3$ (see~\cite{KSS}). In quaternionic function theory these two maps are called left and right normal~\cite[Section~6.2]{Bo}.
\begin{lemma}\label{lemma:g1}
For $a \in \mathcal{M}_2^2 \cup \mathcal{M}_2^3$ we identify $\bar{\Sigma}$ with $\mathbb{C}/(2\omega\mathbb{Z} + 2\omega'\mathbb{Z})$. Let $(p_1,p_2,p_3,p_4)$ correspond to $(z_+,\omega,-\bar{z}_++\omega',\omega+\omega')$. Then the following equations hold:
\begin{align*}
p_3&=\eta(p_1),&p_4&=\eta(p_2),&(\mu_1,\mu_2)|_{p_1, p_3} &= (-1,1),&(\mu_1,\mu_2)|_{p_2, p_4} &= (1,-1).
\end{align*}
\end{lemma}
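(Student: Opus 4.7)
The plan is to verify the four claims one at a time, leaning entirely on the explicit formulas for $\sigma$, $\rho$, $\eta$ on $\mathbb{C}/(2\omega\mathbb{Z}+2\omega'\mathbb{Z})$ together with the normalizations of $\ln\mu_1$ and $\ln\mu_2$ recorded immediately before the lemma.

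For the involution identities, I would first note $\eta(z)=\sigma(\rho(z))=-\bar{z}-\omega'$, which equals $-\bar{z}+\omega'$ modulo $2\omega'\mathbb{Z}$. With $p_1=z_+$ this immediately gives $\eta(p_1)=-\bar{z}_++\omega'=p_2$. With $p_4=\omega+\omega'$, using $\bar{\omega}=\omega$ (since $\omega\in\mathbb{R}$) and $\bar{\omega}'=-\omega'$ (since $\omega'\in i\mathbb{R}$), one computes $\eta(p_4)=-(\omega-\omega')-\omega'=-\omega\equiv\omega=p_3\pmod{2\omega\mathbb{Z}}$. This takes care of the first two assertions.

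For the monodromy values at $p_1$ and $p_3$, I would read them straight off the definitions. Evaluating \eqref{lnmu1} at $z=z_+$ gives $\ln\mu_1(p_1)=\pi i$, so $\mu_1(p_1)=-1$; and the normalization $\ln\mu_2(z_+)=0$ listed in the characterization of $\ln\mu_2$ gives $\mu_2(p_1)=1$. At $p_3=\omega$, the uniqueness statement for $\ln\mu_1$ (roots at $\omega+2\omega\mathbb{Z}+\omega'\mathbb{Z}$) yields $\ln\mu_1(\omega)=0$, while the text explicitly states $\ln\mu_2(\omega)=\pi i$, hence $(\mu_1,\mu_2)|_{p_3}=(1,-1)$.

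To pass to $p_2$ and $p_4$ I would apply the equivariance properties $\sigma^*\ln\mu_j=-\ln\mu_j$ and $\rho^*\ln\mu_j=-\overline{\ln\mu_j}$ valid for both $j=1,2$. Writing $p_2=\rho(-z_+)=\rho(\sigma(z_+))$, one gets $\ln\mu_j(p_2)=-\overline{\ln\mu_j(-z_+)}=\overline{\ln\mu_j(z_+)}$; inserting $\ln\mu_1(z_+)=\pi i$ and $\ln\mu_2(z_+)=0$ yields $\mu_1(p_2)=-1$ and $\mu_2(p_2)=1$. Similarly $p_4=\rho(\omega)$ since $\bar{\omega}+\omega'=\omega+\omega'$, so $\ln\mu_j(p_4)=-\overline{\ln\mu_j(\omega)}$, producing $\mu_1(p_4)=1$ and $\mu_2(p_4)=e^{\pi i}=-1$.

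No real obstacle is expected; this is a bookkeeping verification. The only spot that needs slight care is tracking logarithms modulo $2\pi i$ and using the additive $2\pi i$-quasi-periodicity of $\ln\mu_2$ in $2\omega$ consistently, but in each case only the exponentiated values matter, so the result is robust under these ambiguities.
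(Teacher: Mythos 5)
Your proposal is correct and follows essentially the same route as the paper: the paper's proof likewise reads off $\ln\mu_1|_{p_1}=\pi i$, $\ln\mu_2|_{p_1}=0$, $\ln\mu_1|_{p_3}=0$, $\ln\mu_2|_{p_3}=\pi i$ from Section~\ref{Sec:g1} and then invokes the involution formulas~\eqref{eq:involutions z} and~\eqref{involutions mu} to get the values at $p_2=\eta(p_1)$ and $p_4$. You have merely carried out explicitly the bookkeeping that the paper leaves to the reader.
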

\begin{proof}
In Section \ref{Sec:g1} we have seen 
\begin{align*}
\ln\mu_1|_{p_1}&=i\pi,&\ln\mu_1|_{p_2}&=0,&\ln\mu_2|_{p_1}&=0,&\ln\mu_2|_{p_2}&=i\pi.
\end{align*}
The transformations~\eqref{eq:involutions z} and~\eqref{involutions mu} imply the four equations of the Lemma.
\end{proof}

\begin{corollary}\label{Corollary:g2}
For $a \in \mathcal{M}_2^1$ we can label the roots of $a$ by $p_1, p_2, p_3, p_4$ in such a way that the statements from Lemma \ref{lemma:g1} still hold true.
\end{corollary}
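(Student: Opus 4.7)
The plan is to reduce the corollary to Lemma~\ref{lemma:g1} by a continuity argument. First I establish the sign structure of $\mu_\omega$ at the roots of $a$, then I deform $a$ into $\mathcal{M}_2^2$ and use discreteness to transport the labeling of Lemma~\ref{lemma:g1} back to $\mathcal{M}_2^1$.

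For $a\in\mathcal{M}_2^1$ the four roots of $a$ lie off $\mathbb{S}^1$ and split into two $\eta$-orbits $\{q,\eta(q)\}$, since at branch points $\nu$ vanishes and $\eta$ acts as $\lambda\mapsto\bar\lambda^{-1}$. At any branch point $p$ coming from a root of $a$ we have $\sigma(p)=p$, so~\eqref{involutions mu} forces $\mu_\omega(p)^2=1$; moreover the computation $\mu_\omega(\eta(p))=(\mu_\omega(\rho(p)))^{-1}=\overline{\mu_\omega(p)}=\mu_\omega(p)$, using $\mu_\omega(p)\in\mathbb{R}$, shows that $\mu_\omega$ is constant on each $\eta$-orbit. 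Thus for every $\omega\in\tilde\Gamma_a$ the four values $\mu_\omega$ at the roots of $a$ lie in $\{\pm 1\}$ and are determined by their values on the two $\eta$-orbits.

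Given $a\in\mathcal{M}_2^1$, I choose a continuous path $a_s$, $s\in[0,1]$, in $\mathcal{M}_2$ with $a_0=a$ and $a_1\in\mathcal{M}_2^2$, obtained by letting one of the two $\eta$-pairs of simple roots migrate toward $\mathbb{S}^1$ and coalesce there into a double root. For $s<1$ I label the roots of $a_s$ so that $p_1(s),p_2(s)=\eta(p_1(s))$ are the coalescing pair and $p_3(s),p_4(s)=\eta(p_3(s))$ the surviving pair; at $s=1$ these labels specialize to the points $p_1,\dots,p_4$ appearing in Lemma~\ref{lemma:g1}. By Lemma~\ref{continuous extension} the polynomial $b_\omega$ depends continuously on $(\omega,a)$, so the associated 1-form $d\ln\mu_\omega$ and, through Theorem~\ref{continuous T}, the lattice $\tilde\Gamma_{a_s}$ extend continuously along the path. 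Locally I can therefore choose generators $\omega_1(s),\omega_2(s)$ of $\tilde\Gamma_{a_s}$ depending continuously on $s$ whose values at $s=1$ are the generators of $\tilde\Gamma_{a_1}$ whose monodromy eigenvalues are the $\mu_1,\mu_2$ of~\eqref{lnmu1}--\eqref{lnmu2}, and I set $\mu_j(s):=\mu_{\omega_j(s)}$.

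By the first paragraph each $\mu_j(s)(p_i(s))$ lies in $\{\pm 1\}$. Being a continuous $\{\pm 1\}$-valued function of $s$, it is constant on the connected interval $[0,1]$. At $s=1$ Lemma~\ref{lemma:g1} yields $(\mu_1,\mu_2)|_{p_1,p_2}=(-1,1)$ and $(\mu_1,\mu_2)|_{p_3,p_4}=(1,-1)$, and these equalities then persist at $s=0$, supplying the labeling claimed for $a$; the relations $p_2=\eta(p_1)$ and $p_4=\eta(p_3)$ are preserved along the deformation by construction. The main obstacle is the joint continuity used in this step: one has to verify that the generators of $\tilde\Gamma_{a_s}$ and the associated eigenvalue functions $\mu_\omega$ really extend continuously across the degeneration from $\mathcal{M}_2^1$ to $\mathcal{M}_2^2$, where the genus of the spectral curve drops. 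Continuity of $b_\omega$ is supplied by Lemma~\ref{continuous extension}; what remains is to check that the normalization of $\mu_\omega$ provided by Lemma~\ref{characterisation 1} together with the regularity condition~\eqref{eq:regular function} survives in the limit, so that the values $\mu_{\omega_j(s)}(p_i(s))$ are well-defined and depend continuously on $s$.
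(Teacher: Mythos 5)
Your proposal is correct and follows essentially the same route as the paper: deform $a$ along a path into $\mathcal{M}_2^2$, use the continuity of $\tilde{\Gamma}_a$ and of the monodromy eigenvalues (Theorem~\ref{continuous T} and Lemma~\ref{continuous extension}) to transport the generators, and conclude from the fact that $\mu_1,\mu_2$ take values in $\{\pm 1\}$ at the fixed points of $\sigma$ that these values are constant along the path. Your additional check that $\mu_\omega$ is constant on each $\eta$-orbit is a harmless refinement of the same argument.
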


\begin{proof}
Choose a path from $a \in \mathcal{M}_2^1$ to $\mathcal{M}_2^2$. Due to Theorem~\ref{continuous T}, the lattice $\tilde{\Gamma}_a$ depends continuously on $a$. The generators $\omega_1, \omega_2$ at the end point of the path extend uniquely to generators of $\tilde{\Gamma}_a$ along the path. The eigenvalues $\mu_1, \mu_2$ of the corresponding monodromies also extend along the path. Since the roots of $a$ are fixed points of $\sigma$, $\mu_1$ and $\mu_2$ take values $\pm 1$ there. These values are constant along the path.
\end{proof}
We shall now construct a sublattice $\hat{\Gamma}_a\subset\tilde{\Gamma}_a$ such that the condition (C) is satisfied.
\begin{lemma}\label{lemma_new_lattice}
Consider the lattice $\tilde{\Gamma}_a$ with generators $\omega_1, \omega_2$ constructed in Section \ref{Sec:g1} for $a\in\mathcal{M}_2^2\cup\mathcal{M}_2^3$ and in the proof of Corollary \ref{Corollary:g2} for $a \in \mathcal{M}_2^1$. The eigenvalues of the sublattice $\hat{\Gamma}_a$ generated by $\hat{\omega}_1=\omega_1+\omega_2$ and $\hat{\omega}_2=\omega_2-\omega_1$ satisfy $(\hat{\mu}_1,\hat{\mu}_2)|_{p_1,p_2,p_3,p_4} = (-1,-1)$. The transformation $\tilde{\Gamma}_a \mapsto \hat{\Gamma}_a$ induces a transformation $\tilde{\tau}_a \mapsto \hat{\tau}_a$ given by
$$
\hat{\tau}_a = \frac{\tilde{\tau}_a-1}{\tilde{\tau}_a+1}.
$$
\end{lemma}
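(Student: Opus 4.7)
The plan is to exploit the multiplicativity of eigenvalues of commuting monodromies and then read off the lattice transformation as a Möbius transformation. Since $\hat{\omega}_1, \hat{\omega}_2$ are $\mathbb{Z}$-linear combinations of $\omega_1, \omega_2$ with determinant $\det\bigl(\begin{smallmatrix}1&1\\-1&1\end{smallmatrix}\bigr)=2$, they automatically generate a sublattice $\hat{\Gamma}_a\subset\tilde{\Gamma}_a$ of index two, so the only substantive content is the two assertions about $\hat{\mu}_j$ and $\hat{\tau}_a$.

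First I would verify the eigenvalue statement. The two flows~\eqref{LaxZeta} commute, so the frames satisfy $F(\omega_1+\omega_2)=F(\omega_1)F(\omega_2)$ as well as $F(\omega_2-\omega_1)=F(-\omega_1)F(\omega_2)$. On each one-dimensional eigenspace of $\zeta_0$ above $p\in\Sigma^\ast$ the monodromy $M_\omega$ acts as multiplication by $\mu_\omega(p)$, hence
\begin{align*}
\hat{\mu}_1(p)&=\mu_1(p)\,\mu_2(p),&\hat{\mu}_2(p)&=\mu_1(p)^{-1}\mu_2(p).
\end{align*}
Plugging in the values from Lemma~\ref{lemma:g1} resp.\ Corollary~\ref{Corollary:g2} gives $\hat{\mu}_1(p_i)=(-1)\cdot 1=-1$ and $\hat{\mu}_2(p_i)=(-1)^{-1}\cdot 1=-1$ at $p_1,p_2$, and $\hat{\mu}_1(p_i)=1\cdot(-1)=-1$ and $\hat{\mu}_2(p_i)=1^{-1}\cdot(-1)=-1$ at $p_3,p_4$. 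This yields the claimed values simultaneously at all four points.

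Second I would deduce the transformation of $\tau_a$. By construction in Section~\ref{sec periods}, $\tilde{\tau}_a$ is the class of $\omega_2/\omega_1$ in $\mathcal{F}$, and analogously $\hat{\tau}_a$ is the class of $\hat{\omega}_2/\hat{\omega}_1$. A direct computation gives
$$
\frac{\hat{\omega}_2}{\hat{\omega}_1}=\frac{\omega_2-\omega_1}{\omega_2+\omega_1}=\frac{(\omega_2/\omega_1)-1}{(\omega_2/\omega_1)+1}=\frac{\tilde{\tau}_a-1}{\tilde{\tau}_a+1},
$$
which is rotation-dilation invariant (numerator and denominator being $\mathbb{R}$-linear in $\omega_1,\omega_2$), so the identity descends to the claimed Möbius transformation of isomorphism classes.

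The only mild obstacle is the representative ambiguity: $\tilde{\tau}_a\in\mathcal{F}$ is unique only up to the identifications~\eqref{fundamental}, and $\hat{\tau}_a$ should again be understood as its class in $\mathcal{F}$ after applying an appropriate element of $\mathrm{SL}(2,\mathbb{Z})$ to land in the fundamental domain. Since the Möbius formula above depends only on the $\mathbb{R}$-linear span, it descends unambiguously to the isomorphism classes, which is the content of the stated equation.
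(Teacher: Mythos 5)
Your proof is correct and follows essentially the same route as the paper's: multiplicativity of the monodromy eigenvalues gives $\hat{\mu}_1=\mu_1\mu_2$ and $\hat{\mu}_2=\mu_1^{-1}\mu_2$ (the paper writes the reciprocal $\mu_1\mu_2^{-1}$, which agrees here since the values at the $p_i$ are $\pm1$), and the lattice statement is the same M\"obius computation $\hat{\omega}_2/\hat{\omega}_1=(\tilde{\tau}_a-1)/(\tilde{\tau}_a+1)$. The only caveat is that your frame-factorization argument $F(\omega_1+\omega_2)=F(\omega_1)F(\omega_2)$ literally requires $\omega_1,\omega_2\in\Gamma_a$, which fails for $a\in\mathcal{M}_2^2\cup\mathcal{M}_2^3$ where $\Gamma_a$ is not a lattice and $\omega_2\notin\Gamma_a$; there one should instead appeal to the linearity of $\omega\mapsto b_\omega$ from Lemma~\ref{1-forms}, which makes $\omega\mapsto\mu_\omega$ multiplicative by construction.
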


\begin{proof}
We get for a sublattice $\hat{\Gamma}_a \subset \tilde{\Gamma}_a$
\begin{align*}
\hat{\mu}_1&=\mu_1\mu_2,&\hat{\mu}_2&=\mu_1\mu_2^{-1}.
\end{align*}
With $\tilde{\tau}_a=\frac{\omega_2}{\omega_1}$ we obtain $\hat{\tau}_a=\frac{\hat{\omega}_2}{\hat{\omega}_1}=\frac{\tilde{\tau}_a-1}{\tilde{\tau}_a+1}$ and the claim is proved.
\end{proof}
\begin{remark}
The volume of $\mathbb{C}/\hat{\Gamma}_a$ is $\vol(\mathbb{C}/\hat{\Gamma}_a) =|\tilde{\Gamma}_a/\hat{\Gamma}_a|\cdot\vol(\mathbb{C}/\tilde{\Gamma}_a)=2\cdot\vol(\mathbb{C}/\tilde{\Gamma}_a)$.
\end{remark}
\begin{remark}
We want to emphasize that the map 
\begin{align*}
\hat{T}:\,\mathcal{M}_2^2&\to\mathcal{F},&a&\mapsto \hat{\tau}_a
\end{align*}
is continuous but not differentiable at the points corresponding to $(r,z_+) \in (0,1) \times \{\omega\}$. In fact, as a function from the fundamental domain $\mathcal{F}$ onto itself $\tilde{\tau}_a\mapsto\hat{\tau}_a$ is given by
\begin{align*}
\hat{\tau}_a&=
\begin{cases}
\frac{\tilde{\tau}_a-1}{\tilde{\tau}_a+1}&\mbox{for }\Re(\tilde{\tau}_a)<0 \\
\frac{1+\tilde{\tau}_a}{1-\tilde{\tau}_a}&\mbox{for }\Re(\tilde{\tau}_a)\geq0
\end{cases}.
\end{align*}
\end{remark}
\begin{theorem}
For all $a \in \mathcal{M}_2^1 \cup \mathcal{M}_2^2 \cup \mathcal{M}_2^3$ the map $\imm_a:\mathbb{C}/\hat{\Gamma}_a\to\mathbb{H}$ is a conformal immersion.
\end{theorem}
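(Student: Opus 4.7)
The plan is to establish in sequence three properties of $\imm_a$: it descends to $\mathbb{C}/\hat\Gamma_a$, it takes values in $\mathbb{H}$, and it is pointwise a non-zero conformal map.

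For the first, I would use that the monodromy $M_{\hat\omega_j}$ commutes with $\zeta_0$ and hence acts on each one-dimensional eigenspace $\mathbb{C}\chi_i \subset \mathbb{C}^2$ by the scalar $\hat\mu_j(p_i)$. Translating $z$ by a generator $\hat\omega_j$ of $\hat\Gamma_a$ therefore multiplies each column $\psi_i(z) = F^{-1}(z)\chi_i$ of $\Psi_1 := (\psi_1,\psi_2)$ and $\Psi_2 := (\psi_3,\psi_4)$ on the left by $\hat\mu_j(p_i)$. Lemma~\ref{lemma_new_lattice} gives $\hat\mu_j(p_i) = -1$ for all $i,j$, so both $\Psi_k$ pick up the same scalar $-\mathrm{id}$, which cancels in $\imm_a = \Psi_1^{-1}\Psi_2$. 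This verifies the closing condition (C) and the $\hat\Gamma_a$-periodicity of $\imm_a$.

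For the second, the normalisations $\chi_2 = -\qj\bar\chi_1$ and $\chi_4 = -\qj\bar\chi_3$ combined with the involution identity $\eta^{\ast}F = -\qj\bar F\qj$ propagate to $\psi_2 = -\qj\bar\psi_1$ and $\psi_4 = -\qj\bar\psi_3$. Hence both $\Psi_k$ lie in the multiplicatively closed subalgebra $\{A \in \mathbb{C}^{2\times 2} \mid \qj A = \bar A\qj\}$ realising $\mathbb{H}$, so $\imm_a(z) \in \mathbb{H}$.

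The principal step is the conformal immersion property. I would differentiate $\psi_i = F^{-1}\chi_i$ using \eqref{ODE_frame} to obtain $d\psi_i = -\bigl(U(\zeta)|_{\lambda_i}\,dx + V(\zeta)|_{\lambda_i}\,dy\bigr)\psi_i$, and then exploit
\[
d\imm_a = \Psi_1^{-1}\bigl(d\Psi_2 - (d\Psi_1)\imm_a\bigr).
\]
Expanding $U(\zeta)$ and $V(\zeta)$ in powers of $\lambda$ and evaluating at the paired points $p_i$, the right-hand side should factor into the Pinkall-Pedit form of a quaternionic $1$-form with well-defined left and right normals $N, R \in \mathbb{H}$ satisfying $N^2 = R^2 = -1$, which is the conformality condition~\cite{PP}. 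The non-vanishing of $d\imm_a$ then reduces to the linear independence of $\chi_1$ and $\chi_3$ (automatic since $p_1 \neq p_3$ and $\zeta_0$ is diagonalisable there) together with the strict positivity $\gamma > 0$ built into $\mathcal{P}_2$; indeed, the conformal factor should emerge as a multiple of $\gamma^2$, matching the Willmore integrand~\eqref{Willmore}.

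The main obstacle is the explicit verification of the quaternionic factorisation underlying conformality; it requires simultaneously tracking the $\lambda$-dependence of $U(\zeta)$ and $V(\zeta)$ at the four paired spectral points and exploiting the anti-hermitian structure of $\lambda^{-3/2}\zeta$ on $\mathbb{S}^1$, together with the identities $\rho^{\ast}\mu_\omega = \bar\mu_\omega^{-1}$ and $\sigma^{\ast}\mu_\omega = \mu_\omega^{-1}$ that interlock the four columns. A useful simplification is that by Theorem~\ref{thm orbits} the whole orbit of $\imm_a$ under the choice of $\zeta_0 \in I(a)$ differs only by global conformal transformations of $\mathbb{H}$, so one may pick a convenient representative (for example an off-diagonal $\zeta_0$, which exists in all three cases $\mathcal{M}_2^1,\mathcal{M}_2^2,\mathcal{M}_2^3$) to carry out the computation and then argue invariance.
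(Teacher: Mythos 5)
Your first two steps (periodicity via condition (C) with $\hat\mu_j(p_i)=-1$ from Lemma~\ref{lemma_new_lattice}, and quaternionic values via the normalisations $\chi_2=-\qj\bar\chi_1$, $\chi_4=-\qj\bar\chi_3$) match the paper and are fine. But the heart of the theorem is conformality, and there your proposal has a genuine gap: you describe a computation you \emph{would} do (``the right-hand side should factor into the Pinkall--Pedit form'') and you yourself flag the factorisation as ``the main obstacle'' without resolving it. The difficulty is real, because the four eigenfunctions $\psi_i$ live at four \emph{different} points of the spectral curve, so $d\psi_i=-\Omega|_{\lambda_i}\psi_i$ involves four different values of the spectral parameter, and it is not at all obvious a priori that $\Psi_1^{-1}\Psi_2$ satisfies a single conformality equation.

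The observation that closes this gap in the paper is structural, not computational: although $\Omega=U(\zeta)dx+V(\zeta)dy$ depends on $\lambda$, its $(0,1)$-part with respect to the complex structure given by left multiplication with $\qi$,
\begin{equation*}
\tfrac{1}{2}(\Omega+\qi\ast\Omega)=\begin{pmatrix}-\frac{\bar{\alpha}}{2}d\bar{z}&-\gamma d\bar{z}\\ \gamma dz&-\frac{\alpha}{2}dz\end{pmatrix},
\end{equation*}
is \emph{independent of} $\lambda$. Hence all four $\psi_i$, regardless of which spectral point they come from, are holomorphic sections of one and the same quaternionic holomorphic line bundle, and conformality of the quotient $(\psi_1,\psi_2)^{-1}(\psi_3,\psi_4)$ of two holomorphic sections is exactly the example on p.~395 of Pedit--Pinkall~\cite{PP}. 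Your plan to exploit the anti-hermitian structure on $\mathbb{S}^1$ and the involution identities on $\mu_\omega$ points in the wrong direction for this step; the $\lambda$-independence of the $(0,1)$-part is the single fact you need, and without it your brute-force verification at the four paired points is where all the unproved content sits. (Your remark that the choice of $\zeta_0\in I(a)$ only changes $\imm_a$ by global conformal transformations is correct and is also noted in the paper, but it does not substitute for this step.)
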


\begin{proof}
Let us first verify condition~(C). From Lemma \ref{lemma_new_lattice} we set $k_1 = k_2 = -1 \in \mathbb{R}$.

Next we show that $\imm_a$ has the form of the example in~\cite[p.\ 395]{PP}. Due to~\eqref{eq:commute} the 1-form 
\begin{align*}
\Omega&=U(\zeta)dx+V(\zeta))dy=\begin{pmatrix}\frac{\alpha}{2}dz-\frac{\bar{\alpha}}{2}d\bar{z}&-\gamma d\bar{z}-\gamma^{-1}\lambda^{-1}dz\\\gamma dz+\gamma^{-1}\lambda d\bar{z}&\frac{\bar{\alpha}}{2}d\bar{z}-\frac{\alpha}{2}dz\end{pmatrix}
\end{align*}
defines on the trivial quaternionic line bundle over $x+iy\in\mathbb{R}^2$ a real flat connection depending on $\lambda$. The left multiplication with $\qi=(\begin{smallmatrix}i&0\\0&-i\end{smallmatrix})$ endows this bundle with a complex structure in the sense of quaternionic function theory~\cite{PP}. The $(0,1)$-part
$$\tfrac{1}{2}(\Omega+\qi\ast\Omega)=\begin{pmatrix}-\frac{\bar{\alpha}}{2}d\bar{z}&-\gamma d\bar{z}\\\gamma dz&-\frac{\alpha}{2}dz\end{pmatrix}$$
(with $\ast dz=idz$ and $\ast d\bar{z}=-id\bar{z}$~\cite{PP}) does not depend on $\lambda$ and defines an anti-holomorphic structure in the sense of \cite{PP}. Both factors $s_1$ and $s_2$~\eqref{eq:imm} are holomorphic sections of this quaternionic line bundle and $\imm_a$ is a conformal map.
\end{proof}
For $a=(\lambda^2+1)^2$ the immersion $\imm_a$ is the Clifford torus. The Willmore energy of $\imm_a$ is
$$W = \int_{\mathbb{C}/\hat{\Gamma}_a} H^2 \,dA.$$
Here, $H$ denotes the mean curvature and $dA$ the induced volume form. By construction of $\imm_a$ the left normal $N$~\cite{PP} and the Hopf field $Q=\frac{1}{4}(NdN+\ast dN)$ are equal to
\begin{align}\label{left normal}
N&=s_1^{-1}\qi s_1,&\tfrac{1}{4}(NdN+\ast dN)&=s_1^{-1}\left(\begin{smallmatrix}0&-\gamma dz\\\gamma d\bar{z}&0\end{smallmatrix}\right)s_1.
\end{align}
Recall that due to \cite[p. 395]{PP}, the Willmore energy is the $L^2$-norm of the Hopf field $Q$:
$$
W = \int_{\mathbb{C}/\hat{\Gamma}_a}(H^2-K-K^\perp)\,dA = \int_{\mathbb{C}/\hat{\Gamma}_a} 4|Q|^2 = \int_{\mathbb{C}/\hat{\Gamma}_a} 4\gamma^2 \,dx \wedge dy.$$
Note that the topological constant $\int_{\mathbb{C}/\hat{\Gamma}_a}(K+K^\perp)dA$ vanishes in the present situation, since it vanishes for the Clifford torus.
\begin{theorem}
For all $a\in\mathcal{M}_2^1 \cup \mathcal{M}_2^2 \cup \mathcal{M}_2^3$ the Willmore energy $W(a)$ of $\imm_a$ is equal to
\begin{align*}
W(a) = \int\limits_{\mathbb{C}/\hat{\Gamma}_a}4 \gamma^2 \,dx \wedge dy = \int\limits_{\mathbb{C}/\tilde{\Gamma}_a}8 \gamma^2 \,dx \wedge dy = 4i\Res\limits_{\lambda=0}\ln (\mu_2)d\ln (\mu_1).
\end{align*}
For $a\in\mathcal{M}_2^2\cup\mathcal{M}_2^3$ the immersion $\imm_a$ is constrained Willmore with Willmore energy
$$W(a)=\frac{16\pi(\omega'e_3+\eta')\wp'(z_+)}{e_3-\wp(z_+)}.$$
\end{theorem}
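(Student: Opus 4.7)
The first equality in the chain follows directly from the index-$2$ inclusion $\hat{\Gamma}_a\subset\tilde{\Gamma}_a$ stated in the Remark above, combined with the fact that $\gamma$ is a function on $\mathcal{P}_2$ and hence invariant under the translation action of $\tilde{\Gamma}_a$ (monodromies along $\tilde{\Gamma}_a$ fix $\zeta_0$ by Theorem~\ref{thm orbits}). Thus $4\gamma^2\,dx\wedge dy$ descends to $\mathbb{C}/\tilde{\Gamma}_a$, and integration over the double cover $\mathbb{C}/\hat{\Gamma}_a$ produces the factor $2$.

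The central step is the residue identity $\int_{\mathbb{C}/\tilde{\Gamma}_a}8\gamma^2\,dx\wedge dy=4i\Res_{\lambda=0}\ln\mu_2\,d\ln\mu_1$, and my plan is to obtain it by exploiting the Laurent expansions of $\ln\mu_\omega$ at the branch point $\lambda=0$. Using the uniformizer $\nu$ (so $\nu^2=-\lambda+O(\lambda^2)$) and the $\sigma$-antisymmetry $\sigma^\ast\ln\mu_\omega=-\ln\mu_\omega$, the characterization $\ln\mu_\omega=\omega\lambda^{-1}\nu+\mathbf{O}(\nu)$ established after Corollary~\ref{characterisation 2} forces odd expansions $\ln\mu_j=\omega_j/w+c_jw+O(w^3)$ in any local parameter $w$ with $\lambda\sim cw^2$. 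To identify the corresponding residue with the Willmore integral I intend to apply Riemann's bilinear relations to the pair of second-kind differentials $d\ln\mu_1,d\ln\mu_2$ on $\bar{\Sigma}$: the sum of residues at $\lambda=0,\infty$ of $\ln\mu_2\,d\ln\mu_1$ equals a bilinear combination of $A$- and $B$-periods which, after exploiting the $\rho$-reality \eqref{involutions mu} and the $\sigma\circ\rho$-symmetry swapping the two branch points, reduces to twice the residue at $\lambda=0$. The connection to $\gamma^2$ itself comes from recognising $\gamma$ in the $\lambda^0$-block of the 1-form $\Omega=U(\zeta)dx+V(\zeta)dy$ so that the subleading coefficients $c_j$ in the expansion above encode the $\tilde{\Gamma}_a$-integrated conformal factor via the flat-frame equation \eqref{ODE_frame}.

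For the closed-form expression in the cases $a\in\mathcal{M}_2^2\cup\mathcal{M}_2^3$ I will plug the explicit elliptic formulas \eqref{lnmu1} and \eqref{lnmu2} from Section~\ref{Sec:g1} into the residue and expand around $w=z-\omega'$. Since $\omega'$ is a half-period, $\wp'(\omega')=0$, so
$$\wp(\omega'+w)=e_3+\tfrac{1}{2}\wp''(\omega')w^2+O(w^4),\qquad \zeta(\omega'+w)=\eta'-e_3w+O(w^3),$$
while $\zeta(w)=1/w+O(w^3)$. These yield odd Laurent expansions $\ln\mu_j=\omega_j/w+c_jw+O(w^3)$ with $\omega_1=-\pi iK_1$ for $K_1=2(e_3-\wp(z_+))/\wp'(z_+)$ and $\omega_2=\omega'+K_0K_1$, where $K_0$ and $c_j$ are explicit in $e_3,\eta',\omega',K_0,K_1$. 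The crucial algebraic cancellation is that the coefficient of $w^{-1}$ in $\ln\mu_2\,d\ln\mu_1$ collapses to $2\omega_1(\omega'e_3+\eta')$ once the $K_0K_1$-contributions from $\omega_2 e_3$ cancel against the corresponding subleading term; multiplying by $4i$ and substituting $K_1=2(e_3-\wp(z_+))/\wp'(z_+)$ then produces the stated formula. The constrained Willmore property I plan to obtain from the associated family of flat connections $\Omega$: the one-parameter $\lambda$-family of monodromies preserving the period lattice $\hat{\Gamma}_a$ is precisely the infinitesimal deformation obstructed by the conformal constraint, and the standard variational formalism of quaternionic function theory~\cite{PP} identifies $\imm_a$ as a critical point of $W$ restricted to the conformal class $\mathbb{C}/\hat{\Gamma}_a$.

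The main obstacle is the central residue identity of the second paragraph: pinning down the numerical factor $4i$, correctly accounting for both branch points of $\bar{\Sigma}$ above $\{0,\infty\}$, and invoking the $\sigma\rho$-symmetry requires a delicate Stokes/bilinear-relation argument. By contrast the elliptic expansion in paragraph three is a fully explicit, algebraic calculation, and the constrained Willmore property is structural once the residue formula is in hand.
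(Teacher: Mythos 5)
The first equality (the factor $2$ from the index-two sublattice) and the overall shape of your plan --- Laurent-expand $\ln\mu_j=-\omega_j\nu^{-1}+W_j\nu+\mathbf{O}(\nu^3)$ at $\lambda=0$ so that $\Res_{\lambda=0}\ln\mu_2\,d\ln\mu_1=W_2\omega_1-W_1\omega_2$, then evaluate this explicitly with \eqref{lnmu1}--\eqref{lnmu2} --- match the paper. But the step you yourself flag as the main obstacle is genuinely missing, and the tool you propose for it cannot do the job. Riemann's bilinear relations applied to the pair of second-kind differentials $d\ln\mu_1,d\ln\mu_2$ express the \emph{total} sum of residues of $\ln\mu_2\,d\ln\mu_1$ through the $A$- and $B$-periods of the two differentials; since all $A$-periods vanish and all $B$-periods lie in $2\pi i\mathbb{Z}$, this only yields the topological statement that the residues at $\lambda=0$ and $\lambda=\infty$ cancel. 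It cannot produce the analytic quantity $\int\gamma^2\,dx\wedge dy$. The paper's mechanism is different: one renormalizes the eigenfunction $\psi$ of \eqref{baker akhiezer} to a function $\tilde{\psi}$ that is doubly periodic with respect to $\tilde{\Gamma}_a$ (by multiplying with $\exp(-\ln\mu_1\Re(\kappa_1 z)-\ln\mu_2\Re(\kappa_2 z))$ and a $\gamma^{\pm 1/2}$-gauge), expands $\tilde{\psi}$ and its derivative equation in powers of $\nu$, and finds at order $\nu$ that $\bar{\partial}\tilde{\psi}_{1,1}=i\tfrac{W_1\omega_2-W_2\omega_1}{2\Im(\omega_2\bar{\omega}_1)}+\gamma^2$. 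Integrating over the torus and using that $\bar{\partial}$ of a doubly periodic function integrates to zero is what converts the expansion coefficients $W_j$ into $\int\gamma^2$. Your sentence that the coefficients $c_j$ ``encode the integrated conformal factor via the flat-frame equation'' names the desired conclusion but supplies no argument for it; this is the heart of the proof and must be filled in.

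A second, smaller gap: the constrained Willmore property for $a\in\mathcal{M}_2^2\cup\mathcal{M}_2^3$ is not a consequence of a generic variational formalism of the associated family. The paper uses that for these $a$ the function $\lambda$ takes the same value at $p_1$ and $p_2$, so the left normal $N$ of \eqref{left normal} is built from the frame of the sinh-Gordon solution and is therefore a harmonic map to $\mathbb{S}^2$ by Hitchin; Bohle's Theorem~6.1 then gives the constrained Willmore property. This coincidence of $\lambda$-values is exactly why the statement is restricted to $\mathcal{M}_2^2\cup\mathcal{M}_2^3$, and your argument does not account for that restriction. The explicit elliptic computation in your third paragraph follows the paper's route and is fine as a plan, modulo actually verifying the claimed cancellation.
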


\begin{proof}
From~\eqref{ODE_frame} we obtain $d\psi=-\Omega\psi$. 
We define $\kappa_1=\frac{i\bar{\omega}_2}{\Im(\omega_2\bar{\omega}_1)}$ and $\kappa_2=\frac{i\bar{\omega}_1}{\Im(\omega_1\bar{\omega}_2)}$ with
\begin{align*}
\Re(\kappa_1\omega_1)&=1=\Re(\kappa_2\omega_2),&\Re(\kappa_1\omega_2)&=0=\Re(\kappa_2\omega_1).
\end{align*}
In a neighborhood $p = (\lambda, \nu)$ of $p_0 = (0, 0)$ the following function $\tilde{\psi}$ is by definition of $\kappa_1$ and $\kappa_2$ double periodic with respect to $\tilde{\Gamma}_a$ and obeys due to $d\ln\gamma=-\alpha dz-\bar{\alpha}d\bar{z}$~\eqref{LaxZeta}:
\begin{gather*}
\tilde{\psi}(z)=\left(\begin{smallmatrix}\tilde{\psi}_1(z)\\\tilde{\psi}_2(z)\end{smallmatrix}\right)=\exp(\ln\mu_1\Re(\kappa_1z)+\ln\mu_2\Re(\kappa_2z))
\left(\begin{smallmatrix}\gamma^{\frac{1}{2}}(z)&0\\0&\gamma^{-\frac{1}{2}}(z)\nu^{-1}\end{smallmatrix}\right)\psi(z),\\
d\tilde{\psi}=\begin{pmatrix}\ln\mu_1\Re(\kappa_1dz)+\ln\mu_2\Re(\kappa_2dz)-\alpha dz&\lambda^{-1}\nu dz+\nu\gamma^2d\bar{z}\\-\nu^{-1}dz-\gamma^{-2}\lambda\nu^{-1}d\bar{z}&\ln\mu_1\Re(\kappa_1dz)+\ln\mu_2\Re(\kappa_2dz)+\alpha dz\end{pmatrix}\tilde{\psi}.
\end{gather*}
We utilize Lemma~\ref{1-forms} and expand the following functions nearby $p_0$ (see \cite[Section~3]{Hi}):
\begin{gather*}
\begin{aligned}
\ln\mu_1&=-\omega_1\nu^{-1}+W_1\nu+\mathbf{O}(\nu^3),&
\ln\mu_2&=-\omega_2\nu^{-1}+W_2\nu+\mathbf{O}(\nu^3),& W_1, W_2 \in \mathbb{C},
\end{aligned}\\
\ln\mu_1\Re(\kappa_1dz)+\ln\mu_2\Re(\kappa_2dz)=-dz\nu^{-1}+\left(\tfrac{(W_1\bar{\omega}_2-W_2\bar{\omega}_1)}{2\Im(\omega_2\bar{\omega}_1)}dz\!+\!\tfrac{(W_2\omega_1-W_1\omega_2)}{2\Im(\omega_2\bar{\omega}_1)}d\bar{z}\right)i\nu+\mathbf{O}(\nu^3).\end{gather*}
Therefore we may expand the $dz$ and $d\bar{z}$ parts of $d\tilde{\psi}$ independently:
$$d\tilde{\psi}=\left(-\!\begin{pmatrix}1&1\\1&1\end{pmatrix}\nu^{-1}dz+
\mathbf{O}(\nu^0)dz+\begin{pmatrix}i\tfrac{W_2\omega_1-W_1\omega_2}{2\Im(\omega_2\bar{\omega}_1)}&\gamma^2\\\gamma^{-2}&i\tfrac{W_2\omega_1-W_1\omega_2}{2\Im(\omega_2\bar{\omega}_1)}\end{pmatrix}\nu d\bar{z}+\mathbf{O}(\nu^2)d\bar{z}\right)\tilde{\psi}.$$
We normalize the vector $\chi$ in~\eqref{baker akhiezer} by $\chi_1=\gamma^{-\frac{1}{2}}(0)$ such that $\tilde{\psi}_1|_{z=0}=1$. Since $\chi$ is an eigenvector of $\zeta_0$ with eigenvalue $\nu$ this implies $\chi_2=-\nu\gamma^{\frac{1}{2}}(0)+\mathbf{O}(\nu^2)$ and $\tilde{\psi}_2|_{z=0}=-1+\mathbf{O}(\nu)$. Thus $\tilde{\psi}$ takes at $p_0$ the value $\left(\begin{smallmatrix}1\\-1\end{smallmatrix}\right)$ in the kernel of the leading matrix $\left(\begin{smallmatrix}1&1\\1&1\end{smallmatrix}\right)$. Since the $d\bar{z}$-part vanishes at $p_0$ the double periodic function $\tilde{\psi}$ is constant at $p_0$:
\begin{align*}
\tilde{\psi}_1&=\tilde{\psi}_{1,0}+\nu\tilde{\psi}_{1,1}+\mathbf{O}(\nu^2),&
\tilde{\psi}_2&=\tilde{\psi}_{2,0}+\nu\tilde{\psi}_{2,1}+\mathbf{O}(\nu^2) \\
\bar{\partial}\tilde{\psi}_{1,0}&=0=\bar{\partial}\tilde{\psi}_{2,0}&\mbox{i.e.\ }&\tilde{\psi}_{1,0}\mbox{ and }\tilde{\psi}_{2,0}\mbox{ are constant.}
\end{align*}
Now we obtain from the previous expansion
\begin{align*}
\bar{\partial}\tilde{\psi}_{1,1}&=i\tfrac{W_2\omega_1-W_1\omega_2}{2\Im(\omega_2\bar{\omega}_1)}-\gamma^2,&\bar{\partial}\tilde{\psi}_{2,1}&=\gamma^{-2}+i\tfrac{W_1\omega_2-W_2\omega_1}{2\Im(\omega_2\bar{\omega}_1)}.
\end{align*}
Since $\Im(\omega_2\bar{\omega}_1)=\frac{1}{\Im(\kappa_2\bar{\kappa}_1)}>0$ is $\vol(\mathbb{C}/\tilde{\Gamma}_a)$, the integration over $\mathbb{C}/\tilde{\Gamma}_a$ yields
$$4i\Res_{\lambda=0}\ln\mu_2d\ln\mu_1=4i(W_2\omega_1-W_1\omega_2)=\int\limits_{\mathbb{C}/\tilde{\Gamma}_a}8i\tfrac{W_2\omega_1-W_1\omega_2}{2\Im(\omega_2\bar{\omega}_1)}dx\wedge dy=\int\limits_{\mathbb{C}/\tilde{\Gamma}_a}8\gamma^2dx\wedge dy.$$
For $a\in\mathcal{M}_2^2\cup\mathcal{M}_2^3$ the function $\lambda$ takes at both points $p_1$ and $p_2$ the same value. Therefore the left normal $N$~\eqref{left normal} is defined in terms of the frame~$F$~\eqref{ODE_frame} of the solution $u=\ln\gamma$ of the $\sinh$-Gordon equation. In particular, $N$ defines a harmonic map to $\mathbb{S}^2$~\cite{Hi}. Now \cite[Theorem~6.1 or Lemma~6.3]{Bo} implies that $\imm_a$ is constrained Willmore. Due to~\eqref{involutions mu} the residue of $\ln\mu_2d\ln\mu_2$ at $\lambda=\infty$ is the complex conjugated and therefore the negative of the residue at $\lambda=0$. In this case $\Im(\omega_2\bar{\omega}_1)=\frac{|\omega|^2}{\Im(\omega'/\omega)}>0$, and~\eqref{lnmu1}-\eqref{lnmu2} yields
\begin{align*}
W(a)&=4\pi\Res_{z=0}(\omega'(\zeta(z)+\zeta(z-\omega')+\eta')-2\eta'z)
\frac{\wp(z-\omega')-\wp(z)}{\zeta(z_+)-\zeta(z_+-\omega')-\eta'}dz\\
&=\frac{8\pi(\omega'e_3+\eta')}{\zeta(z_+)-\zeta(z_+-\omega')-\eta'}
=\frac{16\pi(\omega'e_3+\eta')\wp'(z_+)}{e_3-\wp(z_+)}.
\end{align*}
In the last equation we used~\cite[Chapter~13.13~(18)]{Bat}.\end{proof}
The sub-family $(\imm_a)_{a\in\mathcal{M}_2^3}$ are rotational cmc tori in the unit quaternions $\simeq\mathbb{S}^3$ with rectangular conformal classes. They are isothermic surfaces. In terms of the coordinates $r=1$ and $z_+=\frac{\pi i}{4}+t$ of~\eqref{genus0} their Willmore functional is
$$W(a)=2\pi^2(2\cosh^2(t)-1).$$
The minimum $W(a)=2\pi^2$ is taken for $t=0$, which corresponds to the Clifford torus. For non-rectangular conformal classes they are no isothermic surfaces. In Figure~\ref{fig:Willmore_tau} we plot the Willmore functional $W(a)/(4\pi)$ of the family $\imm_a$ of constrained Willmore tori in dependence of the conformal class $\hat{\tau}_a$. We conjecture that this family represents the minimum of the Willmore functional for all conformal classes nearby $\hat{\tau}_a=i$. In a subsequent work we shall construct a second family of constrained Willmore tori in terms of the solutions of the $\sinh$-Gordon equation presented here. The spectral genus of the second family varies between zero and two. Nearby the Clifford torus it coincides with the family $\imm_a$. Only for spectral genus two and on some parts for spectral genus one it differs form the family presented here and has less Willmore energy.

\begin{figure}[h]
		\captionsetup{width=0.7\textwidth}
		\centering
		\fbox{
			\begin{minipage}{8.0cm}
				\centering
				\includegraphics[scale=0.47]{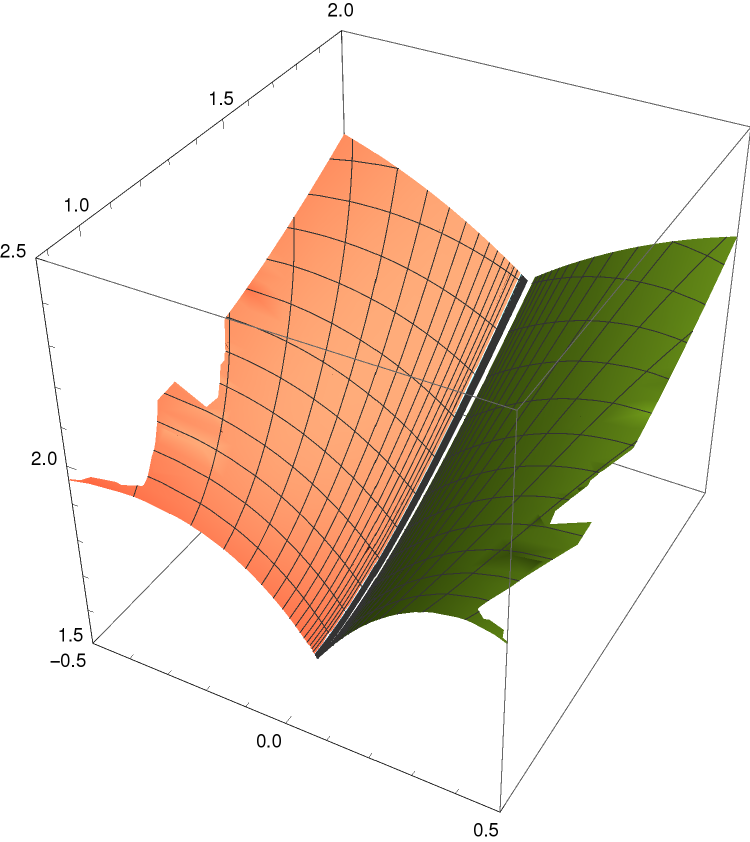}
				\caption{$W(a)/(4\pi)$ in dependence of $\hat{\tau}_a$.} 
				\label{fig:Willmore_tau}
			\end{minipage}
		}
\end{figure}

\thispagestyle{fancy}	
\fancyhf{}
\renewcommand{\sectionmark}[1]{\markboth{#1}{}} 
 \fancyhead[EL]{\scshape References}
 \fancyhead[OR]{\scshape References}
 \fancyfoot[EL,OR]{\thepage}
 \renewcommand{\headrulewidth}{0.5pt}

\begin{thebibliography}{MM}
\bibitem[1]{Abr}
U.~Abresch, \emph{Constant mean curvature tori in terms of elliptic functions},
  J.\ reine angew.\ Math.\ 374, 169--192 (1987).
\bibitem[2]{Bat}H.Bateman \emph{Higher Transcendental Functions Vol.II.}, McGraw-Hill Book Company, New York (1953).
\bibitem[3]{Bo} Bohle, C.\ \emph{Constrained Willmore tori in the 4-Sphere}, J.\ Diff.\ Geometry {\bf 86}, 71-131 (2010).
\bibitem[4]{FB} Freitag, E., Busam, R.\ \emph{Complex Analysis}, Springer, ISBN 3-540-25724-1, (2005).
\bibitem[5]{HKS1} Hauswirth, L., Kilian, M, Schmidt, M.U. \emph{Finite type minimal surfaces in $\mathbf{S}^2\times\mathbb{R}$}, Illinois J. Math. Volume 57, Number 3 (2013), 697--741.
\bibitem[6]{Hi} Hitchin, N.\ \emph{Harmonic maps from a $2$-torus into the $3$-sphere}, J. Diff. Geometry {\bf 31}, 627-710 (1990).
\bibitem[7]{B:LinAlg} Brieskorn, E.\ \emph{Lineare Algebra und analytische Geometrie I}, Vieweg \& Sohn, ISBN 978-3-322-83175-0, 1st edition, Braunschweig (1985).
\bibitem[8]{PP} Pedit, F., Pinkall, U.\ \emph{Quaternionic analysis on
  Riemann surfaces and differential geometry}, Doc.\ Math.\ J.\ DMV,
  Extra Volume ICM 1998, Vol.~II, 389-400 (1999).
\bibitem[9]{PH} Pe\~{n}a-Hoepner, R.\ \emph{Solutions of the Sinh-Gordon Equation of Spectral Genus Two}, Masther thesis, University of Mannheim, 2015.
\bibitem[10]{PS:89} Pinkall U., Sterling, I.\ \emph{On the classification of constant mean curvature tori}, Annals Math.\ 130, 407--451 (1989).
\bibitem[11]{KSS} Kilian, M., Schmidt, M.U., Schmitt, N.\ \emph{ Flows of Constant Mean Curvature Tori in the 3-Sphere: The Equivariant Case}, J.\ reine angew.\ Math.\ {\bf 707}, 45-86 (2015). 
\end{thebibliography}
\end{document}